\newcolumntype{+}{!{\vrule width 2pt}}
\newlength\savedwidth
\renewcommand{\@biblabel}[1]{\quad#1.}
\theoremstyle{plain}
\newtheorem{theo}{Theorem}[subsection]
\def\polartransformation{% from the pgfmanual section 103.4.2
	\pgfmathsincos@{\pgf@sys@tonumber\pgf@x}%
	\pgf@x=\pgfmathresultx\pgf@y% 
	\pgf@y=\pgfmathresulty\pgf@y%
} % note: the following should work with arbitrary (nonlinear) transformations
\newif\ifcomment
\begin{document}
	\vspace*{0.2in}
	
	% Title must be 250 characters or less.
	\begin{flushleft}
		{\Large
			\textbf\newline{Ginzburg-Landau energy and placement of singularities in generated cross fields} % Please use "sentence case" for title and headings (capitalize only the first word in a title (or heading), the first word in a subtitle (or subheading), and any proper nouns).
		}
		\newline
		% Insert author names, affiliations and corresponding author email (do not include titles, positions, or degrees).
		\\
		Alexis Macq\textsuperscript{1,2},
		Maxence Reberol\textsuperscript{1},
		François Henrotte\textsuperscript{1},
		Pierre-Alexandre Beaufort\textsuperscript{1,3},
		Alexandre Chemin\textsuperscript{1},
		Jean-François Remacle\textsuperscript{1},
		Jean Van Schaftingen\textsuperscript{1}
		\\
		\bigskip
		\textsuperscript{1} UCLouvain, Louvain-la-Neuve, Belgium\\
		\textsuperscript{2} Haute Ecole Galilée - ISPG, Brussels, Belgium\\
		\textsuperscript{3} University of Bern, Switzerland\\
		\bigskip
		
		%% Insert additional author notes using the symbols described below. Insert symbol callouts after author names as necessary.
		%% 
		%% Remove or comment out the author notes below if they aren't used.
		%%
		%% Primary Equal Contribution Note
		%\Yinyang These authors contributed equally to this work.
		%
		%% Additional Equal Contribution Note
		%% Also use this double-dagger symbol for special authorship notes, such as senior authorship.
		%\ddag These authors also contributed equally to this work.
		%
		%% Current address notes
		%\textcurrency Current Address: Dept/Program/Center, Institution Name, City, State, Country % change symbol to "\textcurrency a" if more than one current address note
		%% \textcurrency b Insert second current address 
		%% \textcurrency c Insert third current address
		%
		%% Deceased author note
		%\dag Deceased
		%
		%% Group/Consortium Author Note
		%\textpilcrow Membership list can be found in the Acknowledgments section.
		
		% Use the asterisk to denote corresponding authorship and provide email address in note below.
		Contact : alexis.macq@galilee.be
		
	\end{flushleft}
	% Please keep the abstract below 300 words
	\section*{Abstract}
	
	Cross field generation is often used as the basis for the construction of block-structured quadrangular meshes, and the field singularities have a key impact on the structure of the resulting meshes.
	In this paper, we extend Ginzburg-Landau cross field generation methods with a new formulation that allows a user to impose inner singularities. 
	The cross field is computed via the optimization of a linear objective function with localized quadratic constraints.
	This method consists in fixing singularities in small holes drilled in the computational domain with specific degree conditions on their boundaries, which leads to non-singular cross fields on the drilled domain.
	We also propose a way to calculate the Ginzburg-Landau energy of these cross fields on the perforated domain by solving a Neumann linear problem.
	This energy converges to the energy of the Ginzburg-Landau functional as epsilon and the radius of the holes tend to zero.
	To obtain insights concerning the sum of the inner singularity degrees, we give: (i) an extension of the Ginzburg-Landau energy to the piecewise smooth domain allowing to identify the positions and degrees of the boundary singularities, and (ii) an interpretation of the Poincaré-Hopf theorem focusing on internal singularities.

	\section*{Introduction}
	
	Automatic block-structured quadrilateral meshing techniques have made significant progress
	in the last decade~\cite{Ray2006,Palacios2007,Ray2008} thanks to broad use of smooth cross fields~\cite{quadcover,bommes2013,Campen2015} aligned with
	the model boundaries. The structure
	of the resulting quadrilateral meshes is mainly determined by the
	cross field properties, for instance, the mesh irregular vertices match the
	cross field singularities and the quad sizes are related to the size map
	inherent to the cross field. In this context, understanding and controlling
	the cross field properties is the natural way to influence the quad mesh
	and to tailor it for engineering applications.  The optimization of the
	mesh characteristics (size map, number of structured blocks, position of
	irregular vertices) via the cross field is still largely unexplored but of
	crucial importance for numerical simulation.
	
	Throughout this paper, a cross field will be considered as a discrete object that assigns four orthogonal directions and a norm to each point of a discretized domain. 
%	From smooth cross fields aligned with the domain boundaries there exists a lot of methods to extract a block-structured quadrangular mesh. 
	Those fields may include singularities
	which are points where four orthogonal directions can not be defined while keeping the field smooth. 
	Our paper focuses on the management of the singularities which is primordial to adapt to the needs of a high-precision numerical simulation. Indeed, the singularities appear to coincide with the corners of the blocks of the subsequent block-structured quadrangular mesh (for such results linked with Ginzburg-Landau theory see~\cite{Beaufort, Viertel}). 
	The aim of this paper is to let a user impose inner singularities in a cross field while giving him methods to evaluate his choices in terms of Ginzburg-Landau energy. 
	
	This paper begins with a short literature review~\ref{RelatedWork}.
	We then propose mathematical background~\ref{TheoBackground} in which we give a problem to evaluate the choice of singularities as perforations in the computational domain~\ref{SingEvaluation}. 
	The asymptotic equivalence between this energy and the Ginzburg-Landau energy functional is given. We also highlight a strong link between the energetic Ginzburg-Landau functional and an energy that involves only some positions and degrees.
	Indeed, the positions and degrees that minimize this last energy are the same as the positions and degrees of the singularities of the minimizers of the Ginzburg-Landau  functional~\ref{GLEnergies}. 
	Next, we introduce the main proposed method of this article~\ref{SingPlacement}.
	It consists in trapping the singularities in small holes being drilled in the computational domain and holding specific boundary conditions. 
	By doing so, a user can choose a configuration of inner singularities as holes in a computed non-singular smooth cross field.
	Unfortunately, our main method does not allow to impose boundary singularities.
	This have lead us to propose some theoretical novelties. They bring clues to understand which imposed inner singularity configurations will lead to respect the Poincaré-Hopf theorem in case of angular boundaries.
	In terms of theoretical inputs, we extend the Ginzburg-Landau energy to piecewise-smooth closed domains and give a strategy for managing singularity configurations based on the minimization of this new Ginzburg-Landau boundary energy~\ref{GLPiecewise}. We also give new interpretation of the generalized Poincaré-Hopf theorem focusing on internal singularities~\ref{GenPH}.
	As further thoughts, we present a Neumann problem with zero-radius holes as singularities~\ref{FurThou}. 
	Those dot-sized holes are the same as the ones used in the computation of the asymptotic energy of the drilled domains.
	The solution of this Neumann problem is a single-valued scalar field that is strongly linked to the multivalued angles of a cross field. 
	Indeed, we can define an angle for each cross of a smooth cross field.
	The computed single-valued field and this phase field have perpendicular gradients and form together a complex harmonic field almost everywhere. This Neumann problem could lead to a new simpler way to represent cross fields and help to extract useful size information from it. 
	Its inputs consist of configurations of both the inner and the boundary singularities. 
	This last method offers more freedom to a user than the main method of our paper while this last one creates a boundary singularity configuration that minimizes our generalized Ginzburg-Landau energy for piecewise-smooth domain.

	\section{Related work}\label{RelatedWork}
    
     Let us briefly examine the literature that is either directly related to our work or that follows the same objectives. 
     For broader literature reviews see the papers of Vaxman et al.~\cite{Vaxman2016} and Bommes et al.~\cite{Bommes2012, bommes2013}. Concerning the theory on mesh generation in two dimensions, we refer the reader to a paper of Bunin~\cite{Bunin2008}.
    
    In the literature, there are two main ways to generate a block-structured quad mesh. Either a cross field is computed to extract a quad mesh via a parameterization or the parameterization is directly computed.
    However, there is a strong links between the two approaches~\cite{Myles2013}.
    The equivalent of singularities in the context of cross fields are cones in the context of direct parameterization.
    In either case, the quality of the resulting object depends on the amount of distortion in it, the number of singularities/cones, and the alignment with features and boundaries~\cite{Myles2013}. 
    The methods based on cross fields always emphasizes the boundary alignment whereas parameterization methods are often used for domain without boundaries~\cite{Ben-Chen2008} or  without boundary alignment (e.g. for texture mapping~\cite{Sawhney:2017:BFF}).
    There are integer constraints in parameterization to ensure the possibility to extract a block-structured quadrangular mesh generation from it. These constraints are called quantization and form a complex combinatorial problem (see \cite{Campen2015} for a way to solve them).
    Thus, the first quality that we seek in cross field for block-structured quadrangular meshing is the alignment with the domain boundaries.
    On one hand, only singularities that make sense in a quadrangular mesh are used in the cross field context, i.e. singularities that are such that their degree implies an integer number of neighbors for the corresponding node in the underlying mesh. 
    On the other hand, in parameterization, the correct connection is not always requested and depends on the use of integer parameters. 
    In parameterization for four-sided zone construction, we have two perpendicular fields and if we want an alignment to the domain boundary, an iso value of one of the two fields must correspond to the boundary.
    With regard to the cross fields, alignments with boundaries or features are straightforward. Indeed it just imply that the tangents of the boundary or feature curves corresponds to one of the vectors of the cross field at each point.
    When we extract a parameterization from a cross field, the cones are located at field singularities and
    gradients of parametric coordinates are aligned
    with the field vectors \cite{Myles2013}.

%    "An internal
%    node of a quad mesh is said to be irregular if its valence is not four, and a boundary
%    node if its valence is not three. A quad layout is similar to a quad mesh, but in
%    place of straight edges piecewise smooth curves are allowed. A mesh or quad layout
%    is conforming if any two faces share at most a single vertex or an entire edge."

    Myles and Zorin~\cite{Myles2013} have proposed a method that place singularities to minimize the size distortion in a parameterization. 
    Regarding boundary-aligned parameterization, they show that metric distortion may be reduced by cone chains but that there will always have a trade-off, which can be arbitrated by a regularization factor, between the distortion and the number of cones.
    In our context, this paper therefore confirms that adding singularities can help to limit the difference in size on computed field or parameterization and therefore on block-structured quadrangular mesh extracted from them.
    The same tradeoff appears in the Ginzburg-Landau theory. 
    It can be controlled either by a factor $\epsilon$ or by the radius of singular holes. 
    It is important to have a tradeoff because placing a lot of singularities can lead to the appearance of very small blocks compared to the average block size in the block-structured mesh that we can extract from a cross field.
    As these factors tend to zero, we obtained asymptotic Ginzburg-Landau energy for which the main minimization concerns the number and order of singularities.
    Contrary to what we propose, the method Myles and Zorin \cite{Myles2013} does not allow a user to impose singularities.
	One point that seems particularly interesting to us is that they include metric distortion directly in the optimization energy.
	The paper of Myles and Zorin~\cite{Myles2013} does not consider additional integer constraints needed for quadrangulation.

%	\comment{TODO: describe the X contributions/parts of the paper, in one sentence}
%	
%	\comment{TODO: one paragraph describing each contribution/part (for each 2/3 sentences max)}
%   
%    
%    \comment{Ben-Chen 2008 does not consider alignment constraints}

%    Not any 1-form
%    corresponds to a cross field, which leads to additional conditions
%    on the integrals of the 1-form on loops. For a closed loop, the full
%    change in this angle should be of the form $k\pi/2$, as after a full circle
%    the cross field should be mapped to itself.
%    Clearly, there are many cross-
%    fields satisfying the constraints above, as these restrict the field ori-
%    entation on the boundary curves only. We can choose a R unique field
%    among these by picking the one for which $\|\omega \|^2_2 = \int \omega \wedge \star \omega$ is
%    minimal, i.e. the integral of the rotation rate over all directions and
%    all points is as small as possible. This is exactly the functional used
%    in [Crane et al. 2010] as well as in [Ray et al. 2008; Bommes et al.
%    2009] in different variables.
 
    In addition to cross-field-based methods for constructing parameterization, for the purpose of block-structured mesh generation, methods based on streamlines are common.
    These streamlines extend the direction adjacent to field singularities. 
    Contrary to parameterization, streamline tracing methods have a direct connection with the topology
    of the cross field.
    Leveraging the Ginzburg-Landau theory, Viertel and Osting~\cite{Viertel} claim that Ginzburg-Landau cross field generation methods can be used to produce a cross field whose separatrices divide the domain into four sided regions.
    They investigate the mathematics of cross field generation and cross field guided quad meshing via streamline tracing.
	They also made the observation that many of the computational methods currently used for cross field design attempt to minimize an energy close to the Ginzburg-Landau energy.
	They also highlight the difficulty to compute numerically asymptotic minimizers of the energetic Ginzburg-Landau functional.
	They suggest
	that this difficulty is link with the fact that the Ginzburg-Landau energy assigns too much weight to singularities.
	They further emphasize a slight influence of the refinement of the initial discretization in the discretely computed energy.
    An important point of this paper is that Viertel and Osting use ad-hoc methods to define boundary singularities and hope to see the Ginzburg-Landau theory extended to domains with piecewise-smooth boundaries. 
    Precisely, later in this paper, we propose such an extension of the Ginzburg-Landau theory.

    Beaufort et al.~\cite{Beaufort} propose to directly solve the energetic Ginzburg-Landau functional with a finite element formulation and with the use of edge-based Crouzeix-Raviart
    interpolation functions.
    They also give theoretical results.
    Particularly, they show that the positions of the singularities in the computed cross fields coincide with the positions of irregular vertices in the mesh that can be extracted from it.
    
%    Inspiration.Many algorithms in computer graphics and geometry processing use two orthogonal smooth direction fields (unit tangent vector fields) defined over a surface. For instance, these direction fields are used in texture synthesis, in geometry processing or in nonphotorealistic rendering to distribute and orient elements on the surface. Such direction fields can be designed in fundamentally different ways, according to the symmetry requested: inverting a direction or swapping two directions might be allowed or not. Despite the advances realized in the last few years in the domain of geometry processing, a unified formalism is still lacking for the mathematical object that characterizes these generalized direction fields. As a consequence, existing direction field design algorithms are limited to using nonoptimum local relaxation procedures. 
	Ray et al.~\cite{Ray2008} give a definition of the singularities of N-symmetry direction fields close to the one that we propose on this article. The proposed definition allows to relate singularities to the topology of the surface. Specifically, they provide a generalization of the Poincaré-Hopf theorem to N-symmetry direction fields on 2-manifolds.
    The proposed algorithm allows to produce N-symmetry  directional fields based on user-defined sets of singularities.
    The approach of Ray et al. uses a greedy algorithm to constrain singularities through the topology.
    The variables used to represent the directions are angles relatively measured to a given arbitrary direction.
    This implies an inherent difficulty since the fields of phases are often multivalued.
    The researchers formulate the generation of cross fields as a quadratic form minimization solved via conjugate gradient.
    However, no quality estimations of the user-based locations of singularities are given. A lot of topological analysis of interest are done in the article of Ray et al.~\cite{Ray2008}.
    In direct link with the work that we will present, Ray et al. introduce the idea of replacing singularities with holes, so that they can be handled as borders. 
    However, this paper present a different approach, as
    our formulation has strong links with Ginzburg-Landau theory. Furthermore, our work is not based on angles, which imply harder numerical computations. Last but not least, we work with an efficient linear problem with some very localized quadratic constraints.

	Crane et al. \cite{Crane:2010:TCD} have developed a particular efficient way to impose singularities in direction fields. 
	They based their method on consistent connection and the dual mesh of the input triangular mesh. Such triangular meshes are classically used as a base in direction field generation. We also use them as a base for all the numerical computations that we described in our article.
%	A connection specifies the infinitesimal rotation associated with the motion in any direction and could be used to describe how any tangent vector changes as it moves along the surface.
	% They work with special connections that are said consistent because they ensure that the map of a vector from one point to another is independent of the path taken.
	% These connections allow to compare tangent vectors that are originated at different points of the computational manifold and provide consistent parallel transport.
	% They are defined on the dual mesh of the triangle input mesh.
	% Holonomy : another way to look at consistency: transport vector around a closed loop; may not end up back where we started. If loop bounds a region, holonomy determines curvature over this region.
	% holonomy around contractible loop => curvature over bounded region
	Crane et al. demonstrate that such consistent connections can be computed easily and efficiently.
	They use homology consistent constraints on particular cycles, contractible ones and non-contractible ones, to construct their consistent connections.
	Such as for the cones in parameterization, they concentrate curvature on singularities but in the same time keep a consistent parallel transportation law.
	Their method summarizes in just finding the minimum 2-norm solution of a constrained linear system.
	%Each of their solutions give a trivial connection closest to the Levi-Civita one among all connections with the prescribed set of singularities.
	% Gauss-Bonnet says int_M K = 2pi*chi
	Their utilization of connection on dual meshes gives interesting links between smooth and discrete geometry.
	Just like us, they highlight the importance of topological constraints.
	An important point is the enlightened tradeoff between total control over the singularities and simplicity of the methods for singularity placements.
	A remarkable point is that their method allows to never generate non-wanted
	singularities.
	However, their method does not focus on boundaries or features alignment that are essential in the purpose of block-structured quadrangular mesh generation for high performance numerical simulation.

	\section{Theoretical background}\label{TheoBackground}
	
	A cross field is a particular case of a $n$-symmetry directional field with $n=4$ (for an historical paper on this matter see~\cite{Ray2008}).
	It can be seen as a quadruple of same norm vectors forming a regular cross,
	i.e., that are either orthogonal or opposite to each other.
	A cross field $c$ on a planar region $G$ is a mapping 
	that associates a cross $c(P)$ to every point $P \in G$.
	The four vectors composing a cross $c(P)$ can be regarded as 
	the $4^{th}$ roots of a unique complex number $u(P)= c^4(P)$ (for a justification see~\cite{Beaufort}). This representative complex number (or 2-d vector) is in general not parallel to any of the branches of the cross it represents. 
	A smooth cross field can thus be represented unambiguously 
	as a continuous $u\,: G \mapsto \mathcal{C}$ complex function. %\comment{simplement connexe ? ou ok sans le simplement connexe ? A creuser}.
	The cross field $c$ is then the field of the $4^{th}$ roots of $u$ as illustrated in the Figure \ref{fig:DiskAsExample}.
	\begin{figure}[ht] 
		\begin{minipage}[b]{.5\linewidth}
			\centering
			\includegraphics[width=\linewidth]{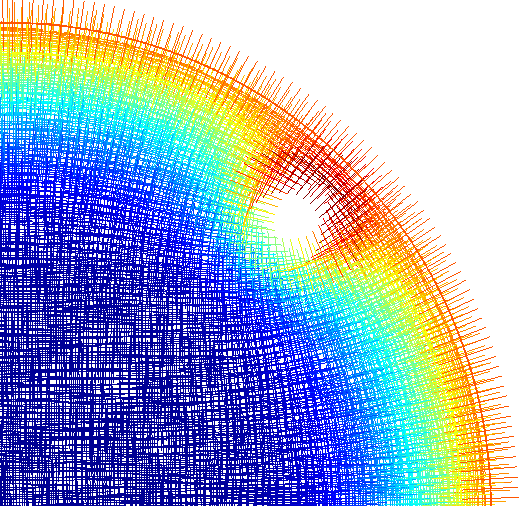}
		\end{minipage}%%
		\begin{minipage}[b]{.5\linewidth}
			\centering
			\includegraphics[width=\linewidth]{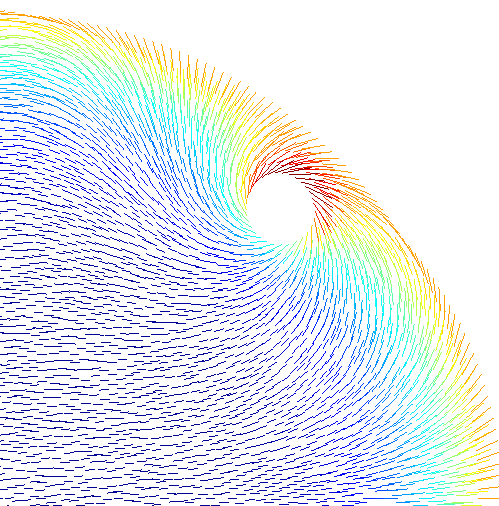} 
			%\vspace{4ex}
		\end{minipage}%% 
		\caption{The crosses of the cross field represented on the left image are the $4^{th}$ roots of the vectors of the vector field represented on the right image.}
		\label{fig:DiskAsExample} 
	\end{figure}
	
	We will take advantage of this link to build discrete cross field analysis on the very established and rich continuous vector field analysis. In particular, one singularity of the representing vector field will correspond to four singularities of the derived cross field. These four singularities will have an order equals to one fourth of the one of the vector field (for a justification see~\cite{Beaufort}).\\
	
	Vector fields and complex functions are equivalent concepts on planar regions
	and we shall use indifferently one or the other according to the context.
	We will see that the notions of degrees and indices are strongly linked and that they allow, via the Poincaré-Hopf theorem, to characterize the total index of a field on the basis of the Euler-Poincaré characteristic of the domain in which it is calculated.
	
	\subsection{Brouwer degree}
	
	The Brouwer degree $\text{deg}(v,\gamma)$ 
	of a vector field $v$ on a smooth closed curve $\gamma$ 
	plays a central role in the context of this paper.
	In mathematical terms, 
	it is defined as the degree of the application $v$
	regarded as a map from $\gamma$ into $S^1$,
	where $S^1$ represents the unit circle in the complex plane. % \comment{Cela me semble confus: on définit un degré à partir d’un degré. Il est important de mentionner qu’il faut que v ne s’annule pas.}. 
	Such an application is not always defined which will
	in those cases be the same for the Brouwer degree.
	Nevertheless, said application and Brouwer degree are well-defined 
	whenever the field $v$ and the curve $\gamma$ 
	are locally planar,
	so that a canonical mapping $S^1\mapsto S^1$ can be found.
	In those instances, the Brouwer degree 
	has a simple and practical geometrical interpretation 
	as the value of the integral 
	\begin{equation}
	\text{deg}(v,\gamma) = \frac{1}{2\pi} \int_{\gamma} d\theta(v)
	\label{eq:brouwerdeg}
	\end{equation}
	%\comment{Cette formule peut s’écrire sans définir l’angle, en prenant le déterminant de v et de sa dérivée divisé par la norme.}
	where $\theta(v)$ is the angle formed by the vector field $v$ 
	and a fixed reference vector, 
	e.g., the x-axis of a local planar coordinate system.
	Note that equivalent formula without the definition of an angle exist (for definitions either via the determinant of a matrix, via an exterior product or via lifting see~\cite{Hatcher2002}).
	That angle is counted positively according to the orientation of the curve $\gamma$. 
	The Brouwer degree is integer-valued for vector fields and it is independent of the orientation of $\gamma$.

	\subsection{Index of a singularity}
	
	\begin{figure}[ht]
		\begin{center}
			
			\tikzset{every picture/.style={line width=0.75pt}} %set default line width to 0.75pt        
			
			\begin{tikzpicture}[x=0.75pt,y=0.75pt,yscale=-1,xscale=1]
			%uncomment if require: \path (0,465); %set diagram left start at 0, and has height of 465
			
			%Shape: Circle [id:dp1926704496599122] 
			\draw  [color={rgb, 255:red, 208; green, 2; blue, 27 }  ,draw opacity=1 ] (229.8,134.8) .. controls (229.8,120.99) and (240.99,109.8) .. (254.8,109.8) .. controls (268.61,109.8) and (279.8,120.99) .. (279.8,134.8) .. controls (279.8,148.61) and (268.61,159.8) .. (254.8,159.8) .. controls (240.99,159.8) and (229.8,148.61) .. (229.8,134.8) -- cycle ;
			%Straight Lines [id:da10435778588853373] 
			\draw [color={rgb, 255:red, 208; green, 2; blue, 27 }  ,draw opacity=1 ]   (271.74,153.02) -- (266.52,154.47) ;

			%Straight Lines [id:da7367281041846702] 
			\draw [color={rgb, 255:red, 208; green, 2; blue, 27 }  ,draw opacity=1 ]   (269.63,158.02) -- (271.74,153.02) ;

			%Shape: Circle [id:dp641039627622315] 
			\draw  [color={rgb, 255:red, 208; green, 2; blue, 27 }  ,draw opacity=1 ] (380.8,134.6) .. controls (380.8,120.79) and (391.99,109.6) .. (405.8,109.6) .. controls (419.61,109.6) and (430.8,120.79) .. (430.8,134.6) .. controls (430.8,148.41) and (419.61,159.6) .. (405.8,159.6) .. controls (391.99,159.6) and (380.8,148.41) .. (380.8,134.6) -- cycle ;
			%Straight Lines [id:da5596505114090224] 
			\draw [color={rgb, 255:red, 208; green, 2; blue, 27 }  ,draw opacity=1 ]   (422.74,152.82) -- (417.52,154.27) ;

			%Straight Lines [id:da022135949722710713] 
			\draw [color={rgb, 255:red, 208; green, 2; blue, 27 }  ,draw opacity=1 ]   (420.63,157.82) -- (422.74,152.82) ;

			%Shape: Circle [id:dp7373707040600409] 
			\draw  [color={rgb, 255:red, 208; green, 2; blue, 27 }  ,draw opacity=1 ] (229,254.8) .. controls (229,240.99) and (240.19,229.8) .. (254,229.8) .. controls (267.81,229.8) and (279,240.99) .. (279,254.8) .. controls (279,268.61) and (267.81,279.8) .. (254,279.8) .. controls (240.19,279.8) and (229,268.61) .. (229,254.8) -- cycle ;
			%Straight Lines [id:da9793998200960502] 
			\draw [color={rgb, 255:red, 208; green, 2; blue, 27 }  ,draw opacity=1 ]   (270.94,273.02) -- (265.72,274.47) ;

			%Straight Lines [id:da02910424344204865] 
			\draw [color={rgb, 255:red, 208; green, 2; blue, 27 }  ,draw opacity=1 ]   (268.83,278.02) -- (270.94,273.02) ;

			%Shape: Circle [id:dp0746374173750538] 
			\draw  [color={rgb, 255:red, 208; green, 2; blue, 27 }  ,draw opacity=1 ] (380,255) .. controls (380,241.19) and (391.19,230) .. (405,230) .. controls (418.81,230) and (430,241.19) .. (430,255) .. controls (430,268.81) and (418.81,280) .. (405,280) .. controls (391.19,280) and (380,268.81) .. (380,255) -- cycle ;
			%Straight Lines [id:da964239225098446] 
			\draw [color={rgb, 255:red, 208; green, 2; blue, 27 }  ,draw opacity=1 ]   (421.94,273.22) -- (416.72,274.67) ;

			%Straight Lines [id:da9139367719898019] 
			\draw [color={rgb, 255:red, 208; green, 2; blue, 27 }  ,draw opacity=1 ]   (419.83,278.22) -- (421.94,273.22) ;

			%Straight Lines [id:da2946621571089083] 
			\draw    (254.8,109.8) -- (255.11,83.47) ;
			\draw [shift={(255.13,81.47)}, rotate = 450.67] [fill={rgb, 255:red, 0; green, 0; blue, 0 }  ][line width=0.75]  [draw opacity=0] (10.72,-5.15) -- (0,0) -- (10.72,5.15) -- (7.12,0) -- cycle    ;
			
			%Straight Lines [id:da9282701558536903] 
			\draw    (405.47,187.93) -- (405.78,161.6) ;
			\draw [shift={(405.8,159.6)}, rotate = 450.67] [fill={rgb, 255:red, 0; green, 0; blue, 0 }  ][line width=0.75]  [draw opacity=0] (10.72,-5.15) -- (0,0) -- (10.72,5.15) -- (7.12,0) -- cycle    ;
			
			%Straight Lines [id:da5621739562195777] 
			\draw    (278.83,268.97) -- (279.14,242.63) ;
			\draw [shift={(279.17,240.63)}, rotate = 450.67] [fill={rgb, 255:red, 0; green, 0; blue, 0 }  ][line width=0.75]  [draw opacity=0] (10.72,-5.15) -- (0,0) -- (10.72,5.15) -- (7.12,0) -- cycle    ;
			
			%Straight Lines [id:da44983503775066824] 
			\draw    (429.83,269.17) -- (430.14,242.83) ;
			\draw [shift={(430.17,240.83)}, rotate = 450.67] [fill={rgb, 255:red, 0; green, 0; blue, 0 }  ][line width=0.75]  [draw opacity=0] (10.72,-5.15) -- (0,0) -- (10.72,5.15) -- (7.12,0) -- cycle    ;
			
			%Straight Lines [id:da12925206948206192] 
			\draw    (237.47,116.8) -- (220.09,96.97) ;
			\draw [shift={(218.77,95.47)}, rotate = 408.76] [fill={rgb, 255:red, 0; green, 0; blue, 0 }  ][line width=0.75]  [draw opacity=0] (10.72,-5.15) -- (0,0) -- (10.72,5.15) -- (7.12,0) -- cycle    ;
			
			%Straight Lines [id:da09413562146669296] 
			\draw    (441.44,174.16) -- (424.06,154.33) ;
			\draw [shift={(422.74,152.82)}, rotate = 408.76] [fill={rgb, 255:red, 0; green, 0; blue, 0 }  ][line width=0.75]  [draw opacity=0] (10.72,-5.15) -- (0,0) -- (10.72,5.15) -- (7.12,0) -- cycle    ;
			
			%Straight Lines [id:da13933946643181716] 
			\draw    (442.93,293.16) -- (425.55,273.33) ;
			\draw [shift={(424.23,271.82)}, rotate = 408.76] [fill={rgb, 255:red, 0; green, 0; blue, 0 }  ][line width=0.75]  [draw opacity=0] (10.72,-5.15) -- (0,0) -- (10.72,5.15) -- (7.12,0) -- cycle    ;
			
			%Straight Lines [id:da8140258527738816] 
			\draw    (278.37,245.23) -- (260.99,225.4) ;
			\draw [shift={(259.67,223.9)}, rotate = 408.76] [fill={rgb, 255:red, 0; green, 0; blue, 0 }  ][line width=0.75]  [draw opacity=0] (10.72,-5.15) -- (0,0) -- (10.72,5.15) -- (7.12,0) -- cycle    ;
			
			%Straight Lines [id:da20168338647909723] 
			\draw    (229.8,134.8) -- (202.23,134.73) ;
			\draw [shift={(200.23,134.73)}, rotate = 360.14] [fill={rgb, 255:red, 0; green, 0; blue, 0 }  ][line width=0.75]  [draw opacity=0] (10.72,-5.15) -- (0,0) -- (10.72,5.15) -- (7.12,0) -- cycle    ;
			
			%Straight Lines [id:da09510441953873228] 
			\draw    (460.37,134.67) -- (432.8,134.6) ;
			\draw [shift={(430.8,134.6)}, rotate = 360.14] [fill={rgb, 255:red, 0; green, 0; blue, 0 }  ][line width=0.75]  [draw opacity=0] (10.72,-5.15) -- (0,0) -- (10.72,5.15) -- (7.12,0) -- cycle    ;
			
			%Straight Lines [id:da41734337557381807] 
			\draw    (268.79,229.84) -- (241.21,229.77) ;
			\draw [shift={(239.21,229.76)}, rotate = 360.14] [fill={rgb, 255:red, 0; green, 0; blue, 0 }  ][line width=0.75]  [draw opacity=0] (10.72,-5.15) -- (0,0) -- (10.72,5.15) -- (7.12,0) -- cycle    ;
			
			%Straight Lines [id:da13501035069230138] 
			\draw    (419.79,280.04) -- (392.21,279.97) ;
			\draw [shift={(390.21,279.96)}, rotate = 360.14] [fill={rgb, 255:red, 0; green, 0; blue, 0 }  ][line width=0.75]  [draw opacity=0] (10.72,-5.15) -- (0,0) -- (10.72,5.15) -- (7.12,0) -- cycle    ;
			
			%Straight Lines [id:da7508914452991003] 
			\draw    (254.47,159.8) -- (254.47,188.5) ;
			\draw [shift={(254.47,190.5)}, rotate = 270] [fill={rgb, 255:red, 0; green, 0; blue, 0 }  ][line width=0.75]  [draw opacity=0] (10.72,-5.15) -- (0,0) -- (10.72,5.15) -- (7.12,0) -- cycle    ;
			
			%Straight Lines [id:da5680321301126873] 
			\draw    (405.8,78.9) -- (405.8,107.6) ;
			\draw [shift={(405.8,109.6)}, rotate = 270] [fill={rgb, 255:red, 0; green, 0; blue, 0 }  ][line width=0.75]  [draw opacity=0] (10.72,-5.15) -- (0,0) -- (10.72,5.15) -- (7.12,0) -- cycle    ;
			
			%Straight Lines [id:da5950405178175229] 
			\draw    (229,239.45) -- (229,268.15) ;
			\draw [shift={(229,270.15)}, rotate = 270] [fill={rgb, 255:red, 0; green, 0; blue, 0 }  ][line width=0.75]  [draw opacity=0] (10.72,-5.15) -- (0,0) -- (10.72,5.15) -- (7.12,0) -- cycle    ;
			
			%Straight Lines [id:da8645427959306156] 
			\draw    (380,239.65) -- (380,268.35) ;
			\draw [shift={(380,270.35)}, rotate = 270] [fill={rgb, 255:red, 0; green, 0; blue, 0 }  ][line width=0.75]  [draw opacity=0] (10.72,-5.15) -- (0,0) -- (10.72,5.15) -- (7.12,0) -- cycle    ;
			
			%Straight Lines [id:da03978347386246006] 
			\draw    (279.8,134.8) -- (307.12,134.87) ;
			\draw [shift={(309.12,134.87)}, rotate = 180.14] [fill={rgb, 255:red, 0; green, 0; blue, 0 }  ][line width=0.75]  [draw opacity=0] (10.72,-5.15) -- (0,0) -- (10.72,5.15) -- (7.12,0) -- cycle    ;
			
			%Straight Lines [id:da7020914539221407] 
			\draw    (351.48,134.53) -- (378.8,134.6) ;
			\draw [shift={(380.8,134.6)}, rotate = 180.14] [fill={rgb, 255:red, 0; green, 0; blue, 0 }  ][line width=0.75]  [draw opacity=0] (10.72,-5.15) -- (0,0) -- (10.72,5.15) -- (7.12,0) -- cycle    ;
			
			%Straight Lines [id:da43705183176543383] 
			\draw    (239.34,279.76) -- (266.66,279.83) ;
			\draw [shift={(268.66,279.84)}, rotate = 180.14] [fill={rgb, 255:red, 0; green, 0; blue, 0 }  ][line width=0.75]  [draw opacity=0] (10.72,-5.15) -- (0,0) -- (10.72,5.15) -- (7.12,0) -- cycle    ;
			
			%Straight Lines [id:da4525164703347795] 
			\draw    (390.34,229.96) -- (417.66,230.03) ;
			\draw [shift={(419.66,230.04)}, rotate = 180.14] [fill={rgb, 255:red, 0; green, 0; blue, 0 }  ][line width=0.75]  [draw opacity=0] (10.72,-5.15) -- (0,0) -- (10.72,5.15) -- (7.12,0) -- cycle    ;
			
			%Straight Lines [id:da7074797047358623] 
			\draw    (273.23,117.23) -- (292.04,99.12) ;
			\draw [shift={(293.48,97.73)}, rotate = 496.08] [fill={rgb, 255:red, 0; green, 0; blue, 0 }  ][line width=0.75]  [draw opacity=0] (10.72,-5.15) -- (0,0) -- (10.72,5.15) -- (7.12,0) -- cycle    ;
			
			%Straight Lines [id:da6556967810971079] 
			\draw    (367.03,171.43) -- (385.84,153.32) ;
			\draw [shift={(387.28,151.93)}, rotate = 496.08] [fill={rgb, 255:red, 0; green, 0; blue, 0 }  ][line width=0.75]  [draw opacity=0] (10.72,-5.15) -- (0,0) -- (10.72,5.15) -- (7.12,0) -- cycle    ;
			
			%Straight Lines [id:da6296403647315417] 
			\draw    (260.82,282.77) -- (279.63,264.66) ;
			\draw [shift={(281.07,263.27)}, rotate = 496.08] [fill={rgb, 255:red, 0; green, 0; blue, 0 }  ][line width=0.75]  [draw opacity=0] (10.72,-5.15) -- (0,0) -- (10.72,5.15) -- (7.12,0) -- cycle    ;
			
			%Straight Lines [id:da7330872923779359] 
			\draw    (423.03,237.03) -- (441.84,218.92) ;
			\draw [shift={(443.28,217.53)}, rotate = 496.08] [fill={rgb, 255:red, 0; green, 0; blue, 0 }  ][line width=0.75]  [draw opacity=0] (10.72,-5.15) -- (0,0) -- (10.72,5.15) -- (7.12,0) -- cycle    ;
			
			%Straight Lines [id:da7132515763309917] 
			\draw    (274.62,150.97) -- (296.71,170.25) ;
			\draw [shift={(298.22,171.57)}, rotate = 221.12] [fill={rgb, 255:red, 0; green, 0; blue, 0 }  ][line width=0.75]  [draw opacity=0] (10.72,-5.15) -- (0,0) -- (10.72,5.15) -- (7.12,0) -- cycle    ;
			
			%Straight Lines [id:da506175224556171] 
			\draw    (363.75,96.5) -- (385.84,115.78) ;
			\draw [shift={(387.35,117.1)}, rotate = 221.12] [fill={rgb, 255:red, 0; green, 0; blue, 0 }  ][line width=0.75]  [draw opacity=0] (10.72,-5.15) -- (0,0) -- (10.72,5.15) -- (7.12,0) -- cycle    ;
			
			%Straight Lines [id:da5264133890342532] 
			\draw    (363.75,215.9) -- (385.84,235.18) ;
			\draw [shift={(387.35,236.5)}, rotate = 221.12] [fill={rgb, 255:red, 0; green, 0; blue, 0 }  ][line width=0.75]  [draw opacity=0] (10.72,-5.15) -- (0,0) -- (10.72,5.15) -- (7.12,0) -- cycle    ;
			
			%Straight Lines [id:da06618149867840173] 
			\draw    (226.35,264.1) -- (248.44,283.38) ;
			\draw [shift={(249.95,284.7)}, rotate = 221.12] [fill={rgb, 255:red, 0; green, 0; blue, 0 }  ][line width=0.75]  [draw opacity=0] (10.72,-5.15) -- (0,0) -- (10.72,5.15) -- (7.12,0) -- cycle    ;
			
			%Straight Lines [id:da02104309563131246] 
			\draw    (236.6,152.8) -- (219.07,170.29) ;
			\draw [shift={(217.65,171.7)}, rotate = 315.08000000000004] [fill={rgb, 255:red, 0; green, 0; blue, 0 }  ][line width=0.75]  [draw opacity=0] (10.72,-5.15) -- (0,0) -- (10.72,5.15) -- (7.12,0) -- cycle    ;
			
			%Straight Lines [id:da7760977460372194] 
			\draw    (442.6,97.6) -- (425.07,115.09) ;
			\draw [shift={(423.65,116.5)}, rotate = 315.08000000000004] [fill={rgb, 255:red, 0; green, 0; blue, 0 }  ][line width=0.75]  [draw opacity=0] (10.72,-5.15) -- (0,0) -- (10.72,5.15) -- (7.12,0) -- cycle    ;
			
			%Straight Lines [id:da8342181485434075] 
			\draw    (245.4,228.2) -- (227.87,245.69) ;
			\draw [shift={(226.45,247.1)}, rotate = 315.08000000000004] [fill={rgb, 255:red, 0; green, 0; blue, 0 }  ][line width=0.75]  [draw opacity=0] (10.72,-5.15) -- (0,0) -- (10.72,5.15) -- (7.12,0) -- cycle    ;
			
			%Straight Lines [id:da923460649866533] 
			\draw    (387.2,272.8) -- (370.19,290.89) ;
			\draw [shift={(368.82,292.35)}, rotate = 313.23] [fill={rgb, 255:red, 0; green, 0; blue, 0 }  ][line width=0.75]  [draw opacity=0] (10.72,-5.15) -- (0,0) -- (10.72,5.15) -- (7.12,0) -- cycle    ;

			% Text Node
			\draw (289.8,151.8) node   {$\textcolor[rgb]{0.82,0.01,0.11}{\gamma }\textcolor[rgb]{0.82,0.01,0.11}{_{1}}$};
			% Text Node
			\draw (440.8,151.6) node   {$\textcolor[rgb]{0.82,0.01,0.11}{\gamma }\textcolor[rgb]{0.82,0.01,0.11}{_{2}}$};
			% Text Node
			\draw (289,271.8) node   {$\textcolor[rgb]{0.82,0.01,0.11}{\gamma }\textcolor[rgb]{0.82,0.01,0.11}{_{3}}$};
			% Text Node
			\draw (440,272) node   {$\textcolor[rgb]{0.82,0.01,0.11}{\gamma }\textcolor[rgb]{0.82,0.01,0.11}{_{4}}$};

			\end{tikzpicture}

		\end{center}
		\caption{The first three vector fields represented in black have a Brouwer degree equals to 1 around the red paths. We have 
			a source on $\gamma_1$, a sink on $\gamma_2$ and a vortex on $\gamma_3$. 
			The Brouwer degree of the fourth vector field $v$ is -1 on $\gamma_4$ (saddle).}
		\label{fig:saddleSinkEtc}
	\end{figure}
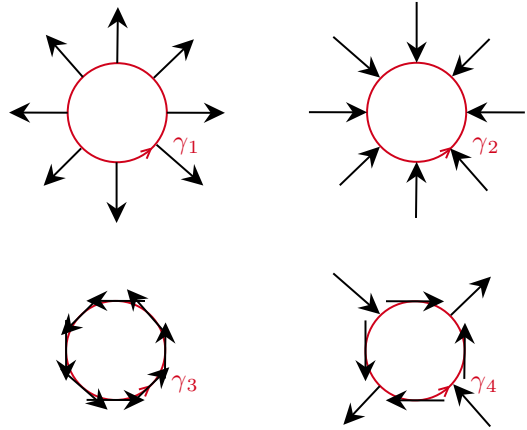
	
	A first situtation where the Brouwer degree exists
	is when the curve $\gamma$,
	defined on a smooth surface $M$ ($\gamma \subset M$), is infinitesimal,
	and therefore can be considered locally planar. That implies that the presented method could easily be extended to manifolds.
	Let $v$ be a vector field defined on $M$.
	The index of an isolated singularity of $v$
	located at a point $P_i \in M$, is defined as the Brouwer degree
	\begin{equation}
	\text{index}(P_i) = \text{deg}(v,\gamma_i(P_i)) = \frac{1}{2\pi} \int_{\gamma_i(P_i)} d\theta(v),
	\label{eq:indexsing}
	\end{equation}
	where $\gamma_i \subset M$ is a circle centered at $P_i$ of infinitesimal radius,
	so that it encloses no other singularity.
	The index is 1 if the singularity is of the type source, vortex or sink, 
	and it is -1 for a saddle type singularity (see figure \ref{fig:saddleSinkEtc}).
	
	\subsection{Poincar\'e-Hopf theorem}
	
	The notion of index entails a very fruitful characterization 
	for the isolated singularities of vector fields 
	(and also for cross fields as we shall see further).
	However it has also a topological significance
	culminating in the Poincar\'e-Hopf theorem (see~\cite{Ray2008,Beaufort} for adaptations of this theorem in the context of cross field).
	This theorem states that
	for a vector field $v$ with isolated singularities
	defined on a closed differentiable manifold $M$
	%and normal to the boundary $\partial M$,
	one has 
	\begin{equation}
	\sum\limits_{i=1}^N \text{index}(P_i) = \chi(M) 
	\label{eq:PoinHopf}
	\end{equation}
	where $\chi(M)$ is the topological Euler-Poincar\'e characteristic 
	of the manifold $M$.
	In dimension 2, the characteristic of $M$ is given by 
	\begin{equation}
	\chi(M) = 2 - 2g - b
	\label{eq:EulerChar}
	\end{equation}
	where $b$ is the number of connected components of the boundary $\partial M$, 
	and $g$ is the genus of the surface, i.e., the maximum number of cuttings along
	non-intersecting closed curves that will {\em not} make the surface disconnected.
	
	\subsection{The topological constraint for planar vector fields on smooth domains}
	
	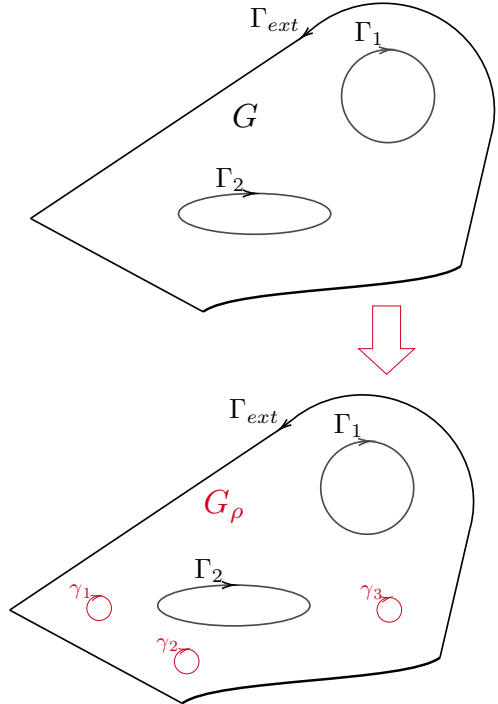
\begin{figure}[ht]
		\centering
		
		\begin{adjustbox}{max width=.5\textwidth}

			\tikzset{every picture/.style={line width=0.75pt}} %set default line width to 0.75pt        
			
			\begin{tikzpicture}[x=0.75pt,y=0.75pt,yscale=-1,xscale=1]
			%uncomment if require: \path (0,941); %set diagram left start at 0, and has height of 941
			
			%Straight Lines [id:da6864071555661981] 
			\draw [color={rgb, 255:red, 0; green, 0; blue, 0 }  ,draw opacity=1 ][line width=1.5]    (59.5,270) -- (266.5,382) ;

			%Curve Lines [id:da3385339632982092] 
			\draw [color={rgb, 255:red, 0; green, 0; blue, 0 }  ,draw opacity=1 ][line width=2.25]    (266.5,382) .. controls (306.5,352) and (535.5,357) .. (575.5,327) ;

			%Straight Lines [id:da12973784124355225] 
			\draw [color={rgb, 255:red, 0; green, 0; blue, 0 }  ,draw opacity=1 ][line width=1.5]    (611.5,172) -- (575.5,327) ;

			%Shape: Arc [id:dp09142027483996673] 
			\draw  [draw opacity=0][line width=1.5]  (399.65,40.6) .. controls (430.86,15.34) and (474.31,4.93) .. (517.31,16.36) .. controls (587.24,34.94) and (629.52,104.2) .. (611.76,171.05) .. controls (611.67,171.37) and (611.59,171.68) .. (611.5,172) -- (485.15,137.41) -- cycle ; \draw  [color={rgb, 255:red, 0; green, 0; blue, 0 }  ,draw opacity=1 ][line width=1.5]  (399.65,40.6) .. controls (430.86,15.34) and (474.31,4.93) .. (517.31,16.36) .. controls (587.24,34.94) and (629.52,104.2) .. (611.76,171.05) .. controls (611.67,171.37) and (611.59,171.68) .. (611.5,172) ;
			%Shape: Circle [id:dp7755034286274565] 
			\draw  [color={rgb, 255:red, 74; green, 74; blue, 74 }  ,draw opacity=1 ][line width=1.5]  (432.5,123.25) .. controls (432.5,92.46) and (457.46,67.5) .. (488.25,67.5) .. controls (519.04,67.5) and (544,92.46) .. (544,123.25) .. controls (544,154.04) and (519.04,179) .. (488.25,179) .. controls (457.46,179) and (432.5,154.04) .. (432.5,123.25) -- cycle ;
			%Shape: Ellipse [id:dp15321652514948836] 
			\draw  [color={rgb, 255:red, 74; green, 74; blue, 74 }  ,draw opacity=1 ][line width=1.5]  (237,264.5) .. controls (237,250.97) and (277.85,240) .. (328.25,240) .. controls (378.65,240) and (419.5,250.97) .. (419.5,264.5) .. controls (419.5,278.03) and (378.65,289) .. (328.25,289) .. controls (277.85,289) and (237,278.03) .. (237,264.5) -- cycle ;
			%Straight Lines [id:da9071678080697864] 
			\draw [color={rgb, 255:red, 74; green, 74; blue, 74 }  ,draw opacity=1 ][line width=1.5]    (488.25,67.5) -- (488.76,67.46) ;
			\draw [shift={(491.75,67.25)}, rotate = 535.9100000000001] [color={rgb, 255:red, 74; green, 74; blue, 74 }  ,draw opacity=1 ][line width=1.5]    (14.21,-4.28) .. controls (9.04,-1.82) and (4.3,-0.39) .. (0,0) .. controls (4.3,0.39) and (9.04,1.82) .. (14.21,4.28)   ;
			
			%Straight Lines [id:da980589938446214] 
			\draw [line width=1.5]    (328.25,240) ;
			\draw [shift={(328.25,240)}, rotate = 180] [color={rgb, 255:red, 0; green, 0; blue, 0 }  ][line width=1.5]    (14.21,-4.28) .. controls (9.04,-1.82) and (4.3,-0.39) .. (0,0) .. controls (4.3,0.39) and (9.04,1.82) .. (14.21,4.28)   ;
			
			%Straight Lines [id:da9066746206299134] 
			\draw [color={rgb, 255:red, 0; green, 0; blue, 0 }  ,draw opacity=1 ][line width=1.5]    (34.5,740) -- (241.5,852) ;

			%Curve Lines [id:da8132787207387806] 
			\draw [color={rgb, 255:red, 0; green, 0; blue, 0 }  ,draw opacity=1 ][line width=2.25]    (241.5,852) .. controls (281.5,822) and (510.5,827) .. (550.5,797) ;

			%Straight Lines [id:da06191397469152138] 
			\draw [color={rgb, 255:red, 0; green, 0; blue, 0 }  ,draw opacity=1 ][line width=1.5]    (586.5,642) -- (550.5,797) ;

			%Shape: Arc [id:dp8674840207869711] 
			\draw  [draw opacity=0][line width=1.5]  (374.9,510.4) .. controls (406.09,485.29) and (449.42,474.96) .. (492.31,486.36) .. controls (562.24,504.94) and (604.52,574.2) .. (586.76,641.05) .. controls (586.67,641.37) and (586.59,641.68) .. (586.5,642) -- (460.15,607.41) -- cycle ; \draw  [color={rgb, 255:red, 0; green, 0; blue, 0 }  ,draw opacity=1 ][line width=1.5]  (374.9,510.4) .. controls (406.09,485.29) and (449.42,474.96) .. (492.31,486.36) .. controls (562.24,504.94) and (604.52,574.2) .. (586.76,641.05) .. controls (586.67,641.37) and (586.59,641.68) .. (586.5,642) ;
			%Shape: Circle [id:dp9911660356173148] 
			\draw  [color={rgb, 255:red, 74; green, 74; blue, 74 }  ,draw opacity=1 ][line width=1.5]  (407.5,593.25) .. controls (407.5,562.46) and (432.46,537.5) .. (463.25,537.5) .. controls (494.04,537.5) and (519,562.46) .. (519,593.25) .. controls (519,624.04) and (494.04,649) .. (463.25,649) .. controls (432.46,649) and (407.5,624.04) .. (407.5,593.25) -- cycle ;
			%Shape: Ellipse [id:dp7406600968332641] 
			\draw  [color={rgb, 255:red, 74; green, 74; blue, 74 }  ,draw opacity=1 ][line width=1.5]  (212,734.5) .. controls (212,720.97) and (252.85,710) .. (303.25,710) .. controls (353.65,710) and (394.5,720.97) .. (394.5,734.5) .. controls (394.5,748.03) and (353.65,759) .. (303.25,759) .. controls (252.85,759) and (212,748.03) .. (212,734.5) -- cycle ;
			%Straight Lines [id:da9505732815995692] 
			\draw [color={rgb, 255:red, 0; green, 0; blue, 0 }  ,draw opacity=1 ][line width=1.5]    (374.9,510.4) -- (360.63,521.28) ;
			\draw [shift={(358.25,523.1)}, rotate = 322.66999999999996] [color={rgb, 255:red, 0; green, 0; blue, 0 }  ,draw opacity=1 ][line width=1.5]    (14.21,-4.28) .. controls (9.04,-1.82) and (4.3,-0.39) .. (0,0) .. controls (4.3,0.39) and (9.04,1.82) .. (14.21,4.28)   ;
			
			%Straight Lines [id:da11633375455314832] 
			\draw [color={rgb, 255:red, 74; green, 74; blue, 74 }  ,draw opacity=1 ][line width=1.5]    (463.25,537.5) -- (463.76,537.46) ;
			\draw [shift={(466.75,537.25)}, rotate = 535.9100000000001] [color={rgb, 255:red, 74; green, 74; blue, 74 }  ,draw opacity=1 ][line width=1.5]    (14.21,-4.28) .. controls (9.04,-1.82) and (4.3,-0.39) .. (0,0) .. controls (4.3,0.39) and (9.04,1.82) .. (14.21,4.28)   ;
			
			%Straight Lines [id:da8120623080076318] 
			\draw [line width=1.5]    (303.25,710) ;
			\draw [shift={(303.25,710)}, rotate = 180] [color={rgb, 255:red, 0; green, 0; blue, 0 }  ][line width=1.5]    (14.21,-4.28) .. controls (9.04,-1.82) and (4.3,-0.39) .. (0,0) .. controls (4.3,0.39) and (9.04,1.82) .. (14.21,4.28)   ;
			
			%Down Arrow [id:dp5381025506863675] 
			\draw  [color={rgb, 255:red, 208; green, 2; blue, 27 }  ,draw opacity=1 ] (453,426.25) -- (469.75,426.25) -- (469.75,375) -- (503.25,375) -- (503.25,426.25) -- (520,426.25) -- (486.5,457) -- cycle ;
			%Shape: Circle [id:dp2803100996202812] 
			\draw  [color={rgb, 255:red, 208; green, 2; blue, 27 }  ,draw opacity=1 ] (127,737.75) .. controls (127,729.6) and (133.6,723) .. (141.75,723) .. controls (149.9,723) and (156.5,729.6) .. (156.5,737.75) .. controls (156.5,745.9) and (149.9,752.5) .. (141.75,752.5) .. controls (133.6,752.5) and (127,745.9) .. (127,737.75) -- cycle ;
			%Straight Lines [id:da42839974604961917] 
			\draw [color={rgb, 255:red, 208; green, 2; blue, 27 }  ,draw opacity=1 ]   (141.75,723) ;
			\draw [shift={(143.05,722.65)}, rotate = 524.9300000000001] [color={rgb, 255:red, 208; green, 2; blue, 27 }  ,draw opacity=1 ][line width=0.75]    (10.93,-3.29) .. controls (6.95,-1.4) and (3.31,-0.3) .. (0,0) .. controls (3.31,0.3) and (6.95,1.4) .. (10.93,3.29)   ;
			
			%Shape: Circle [id:dp20970497692029022] 
			\draw  [color={rgb, 255:red, 208; green, 2; blue, 27 }  ,draw opacity=1 ] (475,739.75) .. controls (475,731.6) and (481.6,725) .. (489.75,725) .. controls (497.9,725) and (504.5,731.6) .. (504.5,739.75) .. controls (504.5,747.9) and (497.9,754.5) .. (489.75,754.5) .. controls (481.6,754.5) and (475,747.9) .. (475,739.75) -- cycle ;
			%Straight Lines [id:da3980521273077636] 
			\draw [color={rgb, 255:red, 208; green, 2; blue, 27 }  ,draw opacity=1 ]   (489.75,725) ;
			\draw [shift={(491.05,724.65)}, rotate = 524.9300000000001] [color={rgb, 255:red, 208; green, 2; blue, 27 }  ,draw opacity=1 ][line width=0.75]    (10.93,-3.29) .. controls (6.95,-1.4) and (3.31,-0.3) .. (0,0) .. controls (3.31,0.3) and (6.95,1.4) .. (10.93,3.29)   ;
			
			%Shape: Circle [id:dp2888148214077172] 
			\draw  [color={rgb, 255:red, 208; green, 2; blue, 27 }  ,draw opacity=1 ] (232,801.75) .. controls (232,793.6) and (238.6,787) .. (246.75,787) .. controls (254.9,787) and (261.5,793.6) .. (261.5,801.75) .. controls (261.5,809.9) and (254.9,816.5) .. (246.75,816.5) .. controls (238.6,816.5) and (232,809.9) .. (232,801.75) -- cycle ;
			%Straight Lines [id:da7090594227312668] 
			\draw [color={rgb, 255:red, 208; green, 2; blue, 27 }  ,draw opacity=1 ]   (246.75,787) ;
			\draw [shift={(248.05,786.65)}, rotate = 524.9300000000001] [color={rgb, 255:red, 208; green, 2; blue, 27 }  ,draw opacity=1 ][line width=0.75]    (10.93,-3.29) .. controls (6.95,-1.4) and (3.31,-0.3) .. (0,0) .. controls (3.31,0.3) and (6.95,1.4) .. (10.93,3.29)   ;
			
			%Straight Lines [id:da7023840931170456] 
			\draw [line width=1.5]    (358.25,523.1) -- (34.5,740) ;

			%Straight Lines [id:da8657832266113327] 
			\draw [color={rgb, 255:red, 0; green, 0; blue, 0 }  ,draw opacity=1 ][line width=1.5]    (399.65,40.6) -- (386.58,51.21) ;
			\draw [shift={(384.25,53.1)}, rotate = 320.94] [color={rgb, 255:red, 0; green, 0; blue, 0 }  ,draw opacity=1 ][line width=1.5]    (14.21,-4.28) .. controls (9.04,-1.82) and (4.3,-0.39) .. (0,0) .. controls (4.3,0.39) and (9.04,1.82) .. (14.21,4.28)   ;
			
			%Straight Lines [id:da8355558672158654] 
			\draw [line width=1.5]    (384.25,53.1) -- (59.5,270) ;

			% Text Node
			\draw (353,31) node [scale=2.5]  {$\textcolor[rgb]{0,0,0}{\Gamma }\textcolor[rgb]{0,0,0}{_{ext}}$};
			% Text Node
			\draw (317,146) node [scale=3,color={rgb, 255:red, 0; green, 0; blue, 0 }  ,opacity=1 ]  {$\textcolor[rgb]{0,0,0}{G}$};
			% Text Node
			\draw (299,223) node [scale=2.5]  {$\Gamma _{2}$};
			% Text Node
			\draw (466,48) node [scale=2.5]  {$\Gamma _{1}$};
			% Text Node
			\draw (328,501) node [scale=2.5]  {$\textcolor[rgb]{0,0,0}{\Gamma }\textcolor[rgb]{0,0,0}{_{ext}}$};
			% Text Node
			\draw (292,616) node [scale=3,color={rgb, 255:red, 0; green, 0; blue, 0 }  ,opacity=1 ]  {$\textcolor[rgb]{0.82,0.01,0.11}{G}\textcolor[rgb]{0.82,0.01,0.11}{_{\rho }}$};
			% Text Node
			\draw (274,693) node [scale=2.5]  {$\Gamma _{2}$};
			% Text Node
			\draw (441,518) node [scale=2.5]  {$\Gamma _{1}$};
			% Text Node
			\draw (120,718) node [scale=2]  {$\textcolor[rgb]{0.82,0.01,0.11}{\gamma }\textcolor[rgb]{0.82,0.01,0.11}{_{1}}$};
			% Text Node
			\draw (224,782) node [scale=2]  {$\textcolor[rgb]{0.82,0.01,0.11}{\gamma }\textcolor[rgb]{0.82,0.01,0.11}{_{2}}$};
			% Text Node
			\draw (469,719) node [scale=2]  {$\textcolor[rgb]{0.82,0.01,0.11}{\gamma }\textcolor[rgb]{0.82,0.01,0.11}{_{3}}$};

			\end{tikzpicture}
			
		\end{adjustbox}
		\caption{Illustration of $G$, $\Gamma_{ext}$, the $\Gamma_i$ and the $\gamma_j$.}
		\label{fig:Gfrontieres}
	\end{figure}

	The Poincaré-Hopf theorem will be developed so that only degrees and indices will be paired, taking advantage of the fact that we work in planar.
	This will lead to the definition of a global constraint that we will use to construct our method for placing singularities within a domain.
	
	The  Poincar\'e-Hopf theorem highlights a profound link existing 
	between a purely analytical concept
	(the zeros of a vector field)
	and a purely topological one 
	(the Euler-Poincar\'e characteristic of the domain of definition of that vector field).
	This link is extremely important in practice
	as it restricts the set of configurations (degrees and positions) of singularities that a surface $M$ can contain based on its Euler-Poincar\'e characteristic $\chi(M)$.
	The theorem is however only applicable to closed surfaces,
	or, if the surface has a boundary
	with fields $v$ normal to that boundary.
	For the purposes of this paper,
	a new interpretation of the generalizations of (\ref{eq:PoinHopf}), valid for wedge-shaped geometries, is developed later~\ref{GenPH}.
	
	Let $G \subset \mathbb{R}^2$ be a planar region, whose boundary
	$$
	%\partial G = \Gamma_{\text{ext}} \cup  \Gamma_1 \cup \cdots \cup \Gamma_{N_{\text{int}}} 
	\partial G = \Gamma_{\text{ext}} - \Gamma_1 - \cdots - \Gamma_{N_{\text{int}}} 
	$$ 
	is composed of one external boundary $\Gamma_{\text{ext}}$
	oriented counter-clockwise,
	and of $N_{\text{int}} = b-1$ internal boundaries $\Gamma_k$
	oriented clockwise (see Figure~\ref{fig:Gfrontieres}).
	The genus of a planar surface being zero, 
	(\ref{eq:PoinHopf}) writes now
	\begin{equation}
	\sum\limits_{i=1}^N \text{index}(P_i) = 2- b = 1-N_{\text{int}}
	\end{equation}
	for planar surfaces.
	On these surfaces, the Brouwer degree of a vector field $v$
	can be defined for any closed curve $\gamma$, even non-infinitesimal. 
	Indeed, a parametrisation of the curve and a global coordinate system then
	allows representing the vector field on any curve as a $S^1 \mapsto S^1$ mapping.  
	The Brouwer degree $\text{deg}(v, \gamma)$  is zero 
	if $v$ is smooth in the interior of $\gamma$,
	and it is otherwise equal to the sum of the indices 
	of the singularities enclosed in $\gamma$.
	
	For smooth boundaries, if
	the vector field $v$ is normal to $\partial G$ 
%	in accordance to the hypothesis of the Poincar\'e-Hopf theorem,
	one can then evaluate the degrees $\text{deg}(v, \Gamma_{\text{ext}}) =1$
	and $\text{deg}(v, \Gamma_k)= -1$ for  $k=1, \dots, N_{\text{int}}$,
	so that the relationship
	\begin{equation}
	\text{deg}(v, \Gamma_{\text{ext}}) 
	= \sum\limits_{i=1}^N \text{index}(P_i) 
	+ \sum_{k=1}^{N_{\text{int}}} \text{deg}(v, \Gamma_k)
	\label{eq:IndexVecField}
	\end{equation}
	holds as an alternative expression 
	of the Poincar\'e-Hopf theorem for planar vector fields (see Figure \ref{fig:Gfrontieres}). 
	
	It has been proven by Bethuel et al.~\cite{BBH} that this relationship 
	remains valid if the field $v$ 
	has arbitrary degrees $\text{deg}(v, \Gamma_{\text{ext}})$ and $\text{deg}(v, \Gamma_k)$
	on the boundary of $G$. 
%	We will see how this generalized topological constraint
%	can also be used for the computation of cross fields. 

	\subsection{Planar cross field and Ginzburg-Landau functional}
	
%	As we said previously, 2D-cross is a particular case of a $n$-symmetry directional field, with $n=4$~\cite{Ray2008}
%	that can be seen as a quadruple of unit norm vectors forming a regular cross,
%	i.e., that are either orthogonal or opposite to each other.
%	A 2D-cross field $c$ on a planar discretize region $G$ is a function 
%	that associates a cross $c(P)$ to every point $P \in G$.
%	The four vectors composing a cross $c(P)$ can be regarded as 
%	the $4^{th}$ roots of a unique complex number $u(P)= c^4(P)$ of unit norm~\cite{Beaufort}, 
%	which is in general not parallel to any of the branches of the cross it represents. 
%	A smooth cross field can thus be represented unambiguously 
%	as a continuous $u\,: G \mapsto S^1$ complex function.
%	The cross field $c$ is then the field of the $4^{th}$ roots of $u$. 

	Let us present the mathematical theory of Ginzburg-Landau adapted to the cross fields.
	 
	We will consider any Euler-Poincaré characteristic and require that one branch of each boundary cross
	is parallel to the local outer normal $\mathbf n$ as boundary conditions.
	Note that this condition shall not be respected on corners of piecewise smooth boundary but the implication in terms of degrees will be tackled via the definition of boundary singularities on such corners.
	For regular boundary crosses (that are present almost everywhere on our piecewise smooth domains of interest), the boundary condition writes
	\begin{equation}
	c(P)\cdot \mathbf n \in \{-1, 0, 1\}, \forall P \in \{\Gamma_{\text{ext}}, \Gamma_{1}, \ldots, \Gamma_{N_{\text{int}}} \},
	\label{eq:boundarycond}
	\end{equation}
	i.e., two branches of a unit boundary cross are aligned with the boundary and the other two branches of this unit boundary cross are opposite and parallel to the local unit outward normal to the boundary. 
	% \comment{Noter l'interprétation!}
	
	Let's temporary assume that we are working on smooth domains.
	If $g_c\,: \{\Gamma_{\text{ext}}, \Gamma_{1}, \ldots, \Gamma_{N_{\text{int}}} \} \rightarrow S^1$ 
	is the boundary condition, smooth on each boundary $\Gamma_i$, then,
	one observes that its Brouwer degree is fixed, $\deg(g_c, \{\Gamma_{\text{ext}}, \Gamma_{1}, \ldots, \Gamma_{N_{\text{int}}} \}) = 1 - N_{\text{int}}$.
	
	However, the branches of a cross are redundant
	as it is enough to know any one of them to reconstruct the whole cross.
	At the same time, the representation of a cross by one of its branches is also ambiguous
	as there exist no systematic rule 
	to select the representative branch of a cross.
	For these reasons, it is convenient to solve
	indirectly for the field $u=c^4$.
	The $4^{th}$ power maps all four branches of the cross
	onto a unique complex number,
	which thus offers a smooth and unambiguous representation for the cross.
	The boundary condition becomes for this vector field
	$g = g_c^4\,: \Gamma_{\text{ext}} \rightarrow S^1$,
	with the Brouwer degree 
	\begin{equation}
	\deg(g, \{\Gamma_{\text{ext}}, \Gamma_{1}, \ldots, \Gamma_{N_{\text{int}}} \}) = 4 - 4 N_{\text{int}}.
	\label{eq:brouwer4}
	\end{equation}
	
	%FH inutile de faire cette distinction 
	% \begin{equation}
	% 	\min\limits_{H^1_g(G, S^1)} \int_G |\nabla u|^2
	% 	\label{eq:Dirichlet}
	% \end{equation}
	% We note that the smoothness of the boundary is crucial for the previous argument, 
	% as it may not be valid if $\partial G$ is only continuous (for example, in a square, see \cref{PHGen}).
	% However, as mentioned in the previous section, there are topological obstructions which may imply that $H^1_g(G, S^1) = \emptyset$. The following relaxation is proposed in~\cite{BBH}
	
	Consequently, to compute a cross field aligned with the domain boundaries and minimizing the Ginzburg-Landau functional, one is led to solve the variational problem
	\begin{equation}
	\min\limits_{H^1_g(G, \mathbb{R}^2)} \int_G |\nabla u|^2 + \frac{1}{4 \epsilon^2} (|u|^2-1)^2.
	\label{eq:GL}
	\end{equation}
	with %$\deg(g, \Gamma_{\text{ext}}) = 4$ and 
	$H_g^1 = \{ u \in H^1(G, \mathbb{R}^2); \; u=g \text{  on  } \{\Gamma_{\text{ext}}, \Gamma_{1}, \ldots, \Gamma_{N_{\text{int}}} \} \}$.
	The gradient term ensures smoothness,
	while the penalty term ensures $|u|$ to be as close to 1 as possible.
	The minimized term of \eqref{eq:GL} is called the Ginzburg-Landau energy of the vector field $u$.
	If $\epsilon$ is small, the minimizers $u_*$ will therefore be close to 1
	outside a small set of isolated singularities, whose cumulated indices sum up to $4 - 4N_{\text{int}}$. 
	Mathematically, when we minimize~\eqref{eq:GL} as $\epsilon \rightarrow 0$ there is talk of asymptotic minimizers of the energetic Ginzburg-Landau functional~\eqref{eq:GL}.
	In the discrete model, singularities are regions of size $\epsilon$ 
	where $|u|$ deviates significantly from 1 towards zero. 
	Note that an adaptation of this energy for two-dimensional manifolds that only implies linear added terms (linked to the Christoffel symbols) has been proposed in~\cite{Macq2018}. 
	
	Now (\ref{eq:IndexVecField}) and (\ref{eq:brouwer4}) imply that 
	\begin{equation}
	\sum\limits_{i=1}^N \text{index}(P_i) 
	= \text{deg}(v, \{\Gamma_{\text{ext}}, \Gamma_{1}, \ldots, \Gamma_{N_{\text{int}}} \})  = 4 - 4 N_{\text{int}}.
	\label{eq:fourSing}
	\end{equation}
	If we count in the points $P_i$ the internal singularities and also the boundary singularities, the formula \eqref{eq:fourSing} holds for domains with piecewise smooth boundary.
	
	Functionals of the form (\ref{eq:GL}) were originally introduced by Ginzburg and Landau 
	in the study of the phase transition problems occurring in superconductivity (see an overview of their research in~\cite{Ginzburg2009}).
	
	The approach of Beaufort et al. (see~\cite{Beaufort}) sees the optimal singularity configurations as the zeros and associated degrees of the minimizers of $E_\epsilon$ for $\epsilon$ small enough.
	Unfortunately, this formulation can not numerically ensure that only a minimum number of singularities, necessary to fulfill \eqref{eq:PoinHopf}, appears and thus is not always able to reproduce the asymptotic mathematical results obtained in~\cite{BBH}.
	
	\begin{figure}[H] 
		\begin{minipage}[b]{.5\linewidth}
			\centering
			\includegraphics[width=\linewidth]{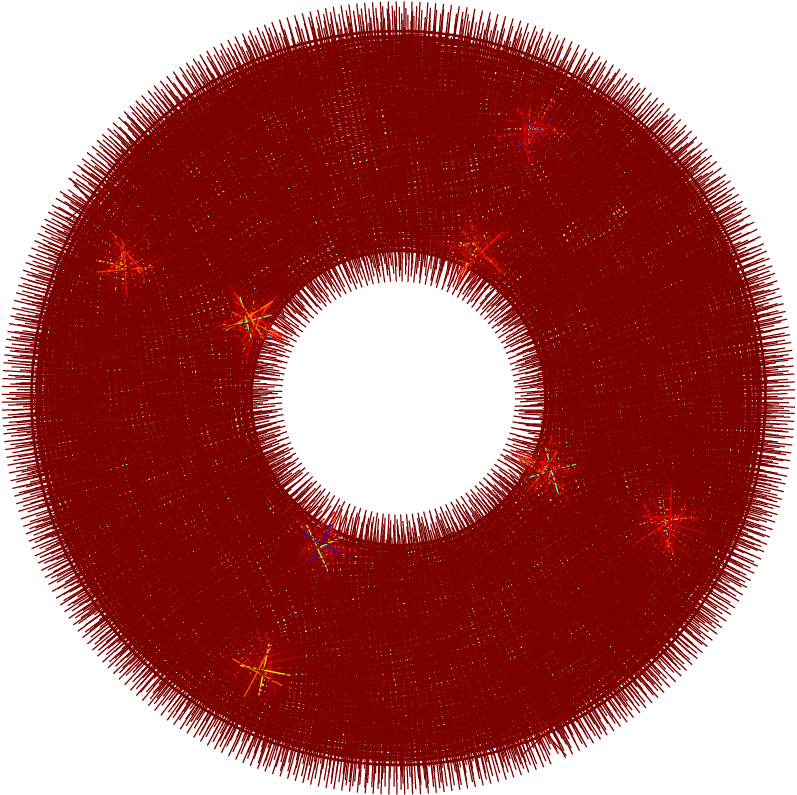} 
		\end{minipage}%%
		\begin{minipage}[b]{.5\linewidth}
			\centering
			\includegraphics[width=\linewidth]{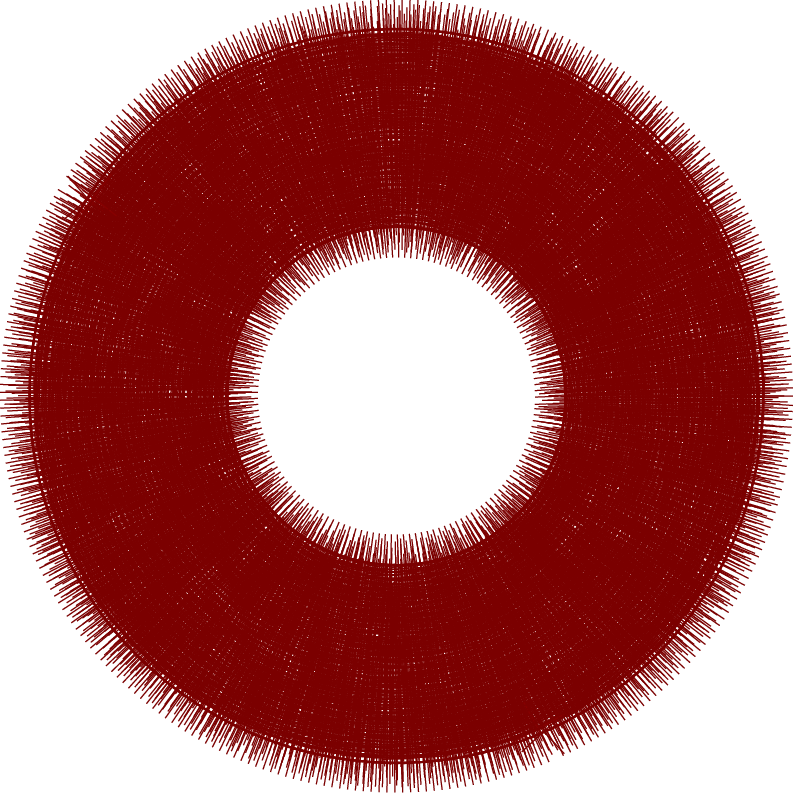} 
		\end{minipage}%% 
		\caption{Here are some results obtained via the method described in~\cite{Beaufort} based on the Ginzburg-Landau functional \eqref{eq:GL} for rings with different ratios of the small radius $r_1$ to the large radius $r_2$. For a parameter  $\epsilon = 1/100$, we obtain, on the left figure (ring with $r_1/r_2 = 0.4$), 8 singularities (four of degree equals to 1/4 and four of degree equals to -1/4) in our cross field and on the right figure (ring with $r_1/r_2 = 0.46$) zero singularity. For a given $\epsilon$, when the ring becomes too thick, singularities appear.}
		\label{fig:ringRatio} 
	\end{figure}
	
	The ring is an example where this phenomenon can occur (see figure \ref{fig:ringRatio}). 
	Although its Euler-Poincar\'e characteristic is $0$, numerically, singularities appear if the ratio of the small radius to the great radius is lower or equal to $0.4$ for $\epsilon = 1/100$ (see figure \ref{fig:ringRatio}).
	With a smaller $\epsilon$ we may have converged to the radial solution with no singularities. However, the use of $\epsilon$ is a problem in itself: the $\epsilon$ needed to obtain a numerical result analogue to the asymptotic mathematical minimizer of \eqref{eq:GL} depends strongly on the domain and must possibly be very small, which may be hard to work with numerically.
	
	On the other hand, the supplementary singularities can be viewed as an advantage for the quadrangular mesh generation because they can be used to limit the size distortions of an elementary quad cutting.
	Let us compare the two rough quad components of Figure \ref{fig:meshedRing}. 
	For thick ring, the radial quad decomposition of the right figure of \ref{fig:meshedRing} has great size difference between its elements. On the contrary, the singularities of opposite sign in the left figure of \ref{fig:meshedRing} enable to thwart those deformations.
	
	\begin{figure}[H] 
		\begin{minipage}[b]{.5\linewidth}
			\centering
			\includegraphics[width=\linewidth]{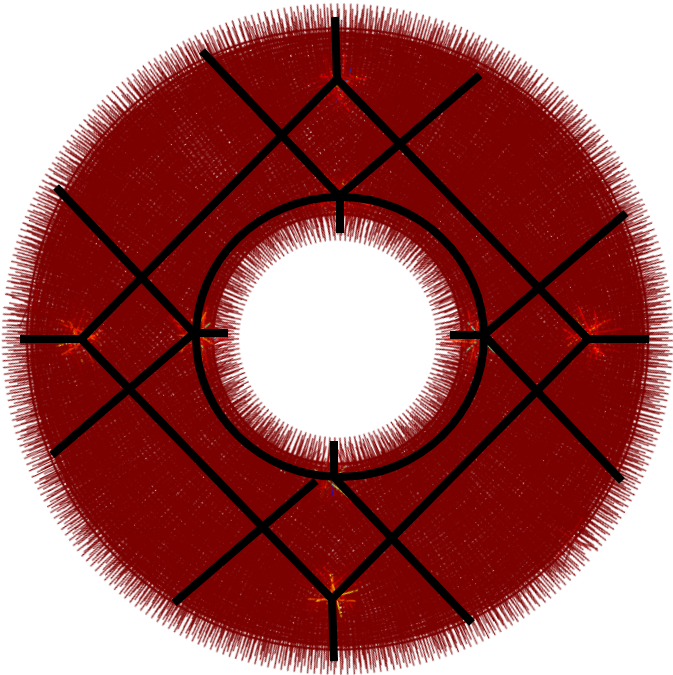} 
		\end{minipage}%%
		\begin{minipage}[b]{.5\linewidth}
			\centering
			\includegraphics[width=\linewidth]{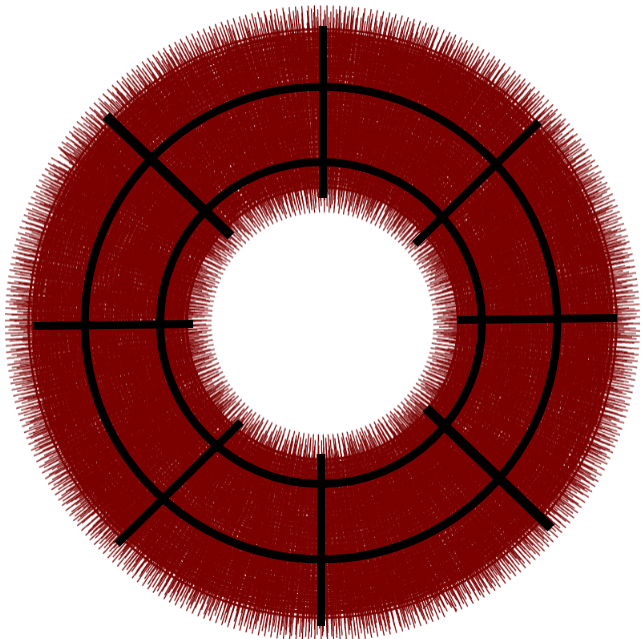} 
		\end{minipage}%% 
		\caption{Obtained meshes via rough directions propagation (priority is given to propagating the directions adjacent to the singularities) for the cross fields of figure \ref{fig:ringRatio}. For a parameter  $\epsilon = 1/100$, we obtain, on the left figure (ring with $r_1/r_2 = 0.4$), a quad mesh with less deformations (especially if we refine it) than the quad mesh of the right figure (ring with $r_1/r_2 = 0.46$) that contains no singularity.}
		\label{fig:meshedRing} 
	\end{figure}
	Judging which of the two cross fields is best from a numerical point of view depends on the properties sought after - for example, emphasis on preferential directions, or, on the contrary, search for uniformity of the elements. Furthermore, the addition of singularities should potentially make it possible to design mesh size for the block-structured quadrangular meshes resulting from the computed cross fields.
	
	Here we have a working but rigid method to construct cross fields on surfaces. In the section \ref{MaterialsAndMethods} we will develop an alternative method in order to choose the configurations of inner singularities.
	
	\subsection{How to evaluate the choice of a singularity location}\label{SingEvaluation}
	
	To find the optimal singularity configuration for various radius of the singular holes with the method presented in \ref{MaterialsAndMethods}, we propose the following energy that is strongly linked with the Ginzburg-Landau energy.
	This energy is the energy of the scalar field that is solution of the following problem:
	\begin{equation}
	\begin{cases}
	-\Delta \Phi = 0 &\text{  in  } G_\rho,\\
	\Phi = \text{Const.} = C_i &\text{  on  } \gamma_i, \; i = 1, 2, \ldots N,\\
	\Phi = 0 &\text{ on } \partial G,\\
	\int_{\gamma_i} \frac{\partial \Phi}{\partial \nu} = 2\pi d_i & i = 1, 2, \ldots, N, \label{eq:Phi}
	\end{cases}
	\end{equation}
	where $G_\rho$ is the domain $G$ in which we have drilled holes of radius $\rho$ with boundaries $\gamma_i$ such as $\chi(G_\rho) = 0$ and where $\nu$ is the outward normal to the $\gamma_i$ (see Figure \ref{fig:Gfrontieres} for an illustration). Here the $C_i$ are not given but are unknown constants that are part of the problem (see~\cite{BBH}).  
	
	Theorem I.1 in~\cite{BBH} tells us that 
	\begin{equation*}
	E_\rho = \int_{G_\rho} | \nabla \Phi|^2 = \inf\limits_{v \in H^1(\mathbb{R}^2,G_\rho)} \int_{G_\rho} |\nabla v|^2.
	\end{equation*}
	
	That means that the solution of the proposed problem have the same energy as the one of the regular cross field proposed in~\ref{MaterialsAndMethods}.
	
	As the problem \eqref{eq:Phi} is a simple linear Neumann problem, it gives a simple way to cheaply compares singularities configurations that could be used with the main method of this paper that will be presented in the beginning of the section \ref{MaterialsAndMethods}.
	
	But, if we are not interested in the impact of the singularity radius or if we let those radius tend to zero, we could even compare the singularity in term of renormalized Ginzburg-Landau energy by direct computation.

	\subsubsection{The unifying renormalized Ginzburg-Landau energy}\label{GLEnergies}
	
	As we take the $\gamma_i$ as being disk of radius $\rho$, we have
	\[
	\lim\limits_{\epsilon \rightarrow 0} \min\limits_{H^1_g(G, \mathbb{R}^2)}  \frac{1}{2} \int_G | \nabla v |^2 + \frac{1}{4 \epsilon^2}  \int_G (|v|^2 - 1)^2 =
	\lim\limits_{\rho \rightarrow 0} \inf\limits_{v_\rho \in \mathcal{E}} \int_{G_\rho} |\nabla u|^2 =  + \infty
	\]
	where
	\begin{equation*}
	\mathcal{E} = \left\lbrace v \in H^1(G_\rho; S^1) \left| \begin{aligned}
	v = g \text{  on  } \partial G \text{    and  }\\
	\deg(v, \gamma_i) = 1 \; \; \forall i
	\end{aligned} \right. \right\rbrace.
	\end{equation*}
	But as explained in~\cite{BBH}, there is a way to compare energetically the two approaches corresponding with those energies by removing the infinite core energy from them. The resulting common energy is called \textit{renormalized} energy. 
	
	Theorem I.2 and Theorem I.7 of~\cite{BBH} prove that there exists a unique minimizer $u_\rho$ for the problem
	\begin{equation}
	\min\limits_{u \in \mathcal{E}} \int_{G_\rho} | \nabla u_\rho |^2
	\end{equation}
	and that the following expansion holds:
	\begin{equation*}
	\frac{1}{2} \int_{G_\rho} | \nabla u_\rho|^2 = \pi d |\log \rho| + W(\{P\}) + O(\rho) \text{  as } \rho \rightarrow 0
	\end{equation*}
	where $W$, called the renormalized energy, stays bounded as $\rho \rightarrow 0$ and where $\{P\} = (P_1, P_2, \ldots P_d)$ is the configuration of the centers of the circles $\gamma_i$. We call singular ``core energy" $\pi d |\log \rho|$. Other examples of removing singular core energy appear in physics (see ~\cite{Kleman} for another example).
	
	The renormalized energy $W$ does not depend on $\rho$ and is the same as the renormalized energy of the energetic Ginzburg-Landau functional.	
	More specifically, it is possible to computed the common renormalized energy $W$ via a set of point locations and degrees rather than via a function such as in \eqref{eq:GL}.
	Indeed the positions and degrees of the singularities of the minimizing field of the energetic Ginzburg-Landau functional, which is defined for maps $u\in H^1(G; \mathbb{C})$,
	\[
	E_\epsilon (u) = \frac{1}{2} \int_G |\nabla u|^2 + \frac{1}{4\epsilon^2} \int_G (|u|^2 -1)^2
	\]
	or the positions and degrees of the singularities that minimize
	\[
		\min\limits_{u \in \mathcal{E}} \int_{G_\rho} | \nabla u_\rho |^2
	\]
	are the same as the positions of the points of the
	configuration $b = (b_1, b_2, \ldots, b_d)$ that minimize the renormalized Ginzburg-Landau energy, as $\epsilon \rightarrow 0$ and if $d = \deg(g, \partial G)$,\\
	\[
	W(b) = - \pi \sum\limits_{i\neq j} \log |b_i - b_j| + \frac{1}{2} \int_{\partial G} \Phi(g \times g_\tau) - \pi \sum\limits_{i=1}^d R(b_i)
	\]
	where $\Phi$ is the solution of the linear Neumann problem
	\[
	\begin{cases}
	\Delta \Phi = 2\pi \sum\limits_{i=1}^d \delta_{b_i} & \text{ in } G,\\
	\frac{\partial \Phi}{\partial \nu} = g \times g_\tau & \text{ on } \partial G,
	\end{cases}
	\]
	where $\nu$ is the outward normal to $\partial G$ and $\tau$ is a unit tangent vector to $\partial G$ such that $(\nu, \tau)$ is direct, the $\delta_{b_i}$ corresponds to the singularity degrees and
	\[
	R(x) = \Phi(x) - \sum\limits_{i=1}^{d} \log|x-b_i|.
	\]
	
	This renormalized energy teaches us that whether a formulations uses holes with a certain radius or the factor epsilon, they all can be compared in terms of the asymptotic Ginzburg-Landau energy by providing their singularities positions and degrees.
	Indeed, those results show us that the minimal configuration of holed singularities from \eqref{eq:DirichletHoled} as $\rho \rightarrow 0$ is the same as the configuration of the zeros of the minimizers of \eqref{eq:GL}. 
	Furthermore, they give a way to compare different choices of singularity configuration in terms of renormalized Ginzburg-Landau energy.
	In particular, Theorem 0.2 of~\cite{BBH} proves that the zeros of minimizers of \eqref{eq:GL} converge to minimizers of $W$.
	
	A scalar field adapted from $\Phi(x)$ to work with cross field is solved in the section \ref{FurThou} and can be used to compute the proposed renormalized energy. One element that makes this a particularly simple problem is that the equivalent of our holes materializes as dots and the equivalent of our degrees as deltas.
	
	Without numerical computation, the analysis of the renormalized energy $W$ teaches us the following things that could be use to compare singularity configurations. 
	First, $W \rightarrow +\infty$ as two singularities $P_i$, $P_j$ of both positive or negative degrees coalesce
	Secondly, $W \rightarrow + \infty$ as a $P_i$ tends to $\partial G$. 
	Third, $W$ decrease as one singularity of positive degree and one singularity of negative degree coalesce.
	Those properties imply that numerically singularities of configuration minimizing the Ginzburg-Landau energies (the renormalized ones and the one of \eqref{eq:GL}) will rather take position moderately far to concave boundaries, more far from convex boundaries and will repulse or attract each other following their degree signs.
	
	While the classical energetic Ginzburg-Landau functional involves finding a function representing a vector field that minimizes a functional, the second consists only in finding a configuration of points as a minimizer. Moreover, the calculation of $\Phi$ in the renormalized energy can be done for any configuration of singularities even though it would not minimize the Ginzburg-Landau renormalized energy. It is therefore a major lead for efficient cross field computations.
	
	\subsection{Pinning strategy}
	
	Let us now consider another way of looking at the imposition of singularities. This kind of method is of great interest in physics but is less interesting in the context of cross fields, as we will explain briefly.
	
	The pinning strategy consists in modifying the Ginzburg-Landau functional in order to pin singularities in desired places. We can either change the non-linear term of this functional or add a weight that varies on the domain in front of the gradient term. In physics this type of approach is called pinning. For more information on changes on the penalty term see~\cite{Rubinstein1995} and~\cite{Constraint1999} and on weight additions see~\cite{Andre1998}.
	
	Numerically, the problem is that those methods are even hard to solve than the method~\cite{Beaufort}.
	
	For cross fields, a classical formulation for their representation vector could be the following. We are looking for $u \in H^1(G, \mathbb{R}^2)$ that minimizes the functional
	\begin{equation*}
	\int_G |\nabla u|^2 + \frac{1}{4 \epsilon^2} \int_G \left( f(x) - |u|^2 \right)^2
	\end{equation*}
	with $\deg(u,\partial G) = 4$ and with
	\begin{equation*}
	f(x) = \exp\left(\|( \sqrt{x^2 + y^2} - 0.5)\|\right)
	\end{equation*}
	for a pin on a circle centered at the origin of radius $0.5$.
	
	\subsection{Quad mesh generation from cross fields}
	
	There are a lot of different methods for extracting quadrangular meshes from cross fields. Among the numerous utilizations, we can highlight 3 interesting approaches. The first one consists in generating streamlines from singularities and to use the underlying cutting to compute quad mesh (see, as examples,~\cite{Campen2017} and~\cite{Pietroni}). The second works with parametrization via scalar fields and mixed integer solvers (see, as examples,~\cite{Bommes} and~\cite{Pellenard}). The third approach uses Morse parameterization hybridization (see~\cite{Fang}).
	
	\section{Materials and methods}\label{MaterialsAndMethods}
	
	We will begin by presenting our method for placing singularities in a domain. We will see that this method does not allow us to explicitly set boundary singularities. The singularities that will appear on the boundaries will in fact be the ones that minimize a new energy that we construct in section~\ref{GLPiecewise}.
	This will imply that it is not easy to know how to respect the Poincaré-Hopf theorem since this theorem takes into consideration both inner and boundary singularity degrees.
	For the practical use of our method, we will propose a new interpretation of the generalizations of the Poincare-Hopf theorem in section~\ref{GenPH}.
	
	\subsection{Singularity placements via perforation strategy}\label{SingPlacement}
	
	As we have seen, regular vector fields can be computed on any surface as long as we have drill holes with prescribed degrees on their boundaries such as to respect the Poincar\'e-Hopf theorem. Furthermore, the analysis of Ginzburg-Landau energy have lead to simplified formulations that allows to have non-minimal number and non-minimizing (in terms of Ginzburg-Landau energy) configuration of singularities (see~\cite{BBH}).
	
	\subsubsection{Description of the problem} \label{mathprob}
	
	The first idea developed in this paper,
	following a similar approach presented in~\cite{BBH},
	is that the unavoidable singularities could be trapped in tiny holes, 
	i.e., excluded from the computational domain. 
	In order to do so, suitable conditions must be prescribed
	on the boundaries of these holes. 
	One would then be left with the computation of a regular (singular free) field that is numerically cheap and stable.
	
	% Those conditions imply that strictly less or strictly more than 4 directions reach them, 
	% depending on the type of singularities desired. 
	% For the sake of brevity, these holes will be referred to as ``singularities". Nonetheless, it does not imply any vanishing of the considered cross field. It has to be mentioned that our generated cross fields may be regular and smooth.
	%an alternative relaxation approach to the penalty term of \eqref{eq:GL} is given. 
	
	Let thus $B(P_i, \rho)$ be $N$ small holes drilled in the domain $G$,
	$\gamma_i = \partial B(P_i, \rho)$,
	and $G_\rho = G \backslash \bigcup_{i} B(P_i, \rho)$ 
	be the computational domain excluding the holes. 
	The points $P_1, P_2, \ldots, P_N $ at which the holes are centered 
	can be placed freely.
	
	% are free to be moved despite some configurations having lower Dirichlet energy on the modified domains.
	% The singularities of \eqref{eq:GL} of same sign repel each other and pairs of singularities of opposite sign tend to coalesce, but the boundary conditions with $d \neq 0$ on $\partial G$ produce a confinement effect (see~\cite{BBH}). Although this is not a limitation of our method, in practice we will rarely work with singularities of degree greater than 1 for one singularity of degree $m > 1$ carries more energy than $m$ singularities
	% of degree $1$.
	
	In~\cite{BBH} there is a proof of the existence of minimizers $v_\rho$ for the problem
	\begin{equation}
	\min\limits_{v \in \mathcal{E}} \int_{G_\rho} | \nabla u |^2.
	\label{eq:DirichletHoled}
	\end{equation}
	where
	\begin{equation*}
	\mathcal{E} = \left\lbrace v \in H^1(G_\rho; S^1) \left| \begin{aligned}
	v = g \text{  on  } \partial G \text{    and  }\\
	\deg(v, \gamma_i) = 1 \; \; \forall i
	\end{aligned} \right. \right\rbrace.
	\end{equation*}
	
	The formulation we will use to place singularities will be slightly different than \eqref{eq:DirichletHoled}.
	For the sake of clarity, the differences are detailed in the following bulleted list.
	\begin{itemize}
		\item{The function $g$ will be such that $u$ has two of its $4^{th}$ roots parallel to $\partial G$ so that the associated cross fields are tangent with $\partial G$.}
		\item{The used vector field will belong to $\mathbb{R}^2$ in the domain rather than $S^1$ for cheap implementation. However, the boundary representation vectors must not divert too much from $S^1$ in order to preserve a well-defined notion of degree. If this notion is not well enough preserved, the computation must be performed with an added constraint on their norms.}
		\item{In the same way as for the previous bullet, we will suppose that $u \in S^1$ on the circles $\gamma_i$ and use the same notion of degree.}
		\item{We will work on connected piecewise smooth bounded domain $G_\rho$.}
		\item{The configurations of $\gamma = (\gamma_1, \gamma_2, ..., \gamma_p)$ will be such as $d = \sum_{i=1}^p \deg(u, \gamma_i)$ but without imposing that $p = d$ with $d = \deg(g, \partial G)$ for smooth domains. Furthermore, $d$ will be equal to a slight generalization of the Brouwer degree for piecewise smooth domains that we will give later on (see section \ref{GenPH}).}
	\end{itemize}

	Note that we will now have holes in our fields instead of the singularities of the fields composed of unitary elements but we will keep the name ``singularity" since these holes always mimic classical singularities and correspond also to singular nodes (nodes with strictly less or more than 4 neighbors) in the meshes derived from these fields.
	
	Three advantages associated with this approach are worth mentioning.
	Firstly, unlike what is shown in~\cite{Beaufort} and~\cite{Viertel}, a parameter $\epsilon$, whose value is not clear and strongly dependent on the problem considered, is not needed anymore. 
	Note that the $\rho$ parameter has taken the place of the $\epsilon$ parameter but our problem stays more stable as long as we do not impose $\rho$ to be very small.
	Secondly, the use of holes to replace zero norm crosses as singularities in $G$ implies that the computed field may be non-singular (or regular) and smooth. 
	The third advantage is the ability to better adapt to the user's needs and demands for quadrangular meshing.
	
	However, one drawback of this approach is that, numerically, the work is performed on reference vectors belonging to $\mathbb{R}^2$, while the associated mathematical results have been obtained for vectors belonging to $S^1$. We will then have a disparity on the norms of the crosses in compensation for the rotations induced in the computed cross fields.

	\subsubsection{Discrete formulation and implementation}  \label{implem}
	
	Numerically, calculations are performed in the domain via Crouzeix-Raviart finite elements (see~\cite{Crouzeix1973} for more details) on a triangular mesh generated via GMSH (see~\cite{GMSH} for more details). It is then a question of building a finite element system. Its matrix of stiffness corresponds to a Laplacian within the domain. At the same time, on each node at the boundary, we construct a cross having two directions tangent to one of the two edges adjacent to the node. Note that the implementation was done in Python using the GMSH python API which can generate the Jacobians and shape functions of the initial triangular mesh.
	
	\subsubsection{Definition of the constraint for holed singularities}
	
	Let $u \in H^1_g(G_\rho, \mathbb{R}^2)$ where $G$ is a piecewise smooth bounded domain and $G_\rho$ equals $G$ perforated by $N$ circular holes of boundaries $\gamma_j, 1 \leq j \leq N$.
	The following constraint must be respected
	\begin{equation}
	\sum\limits_{j=1}^N \deg(u, \gamma_j) = \chi \deg(g, \partial G)
	\label{eq:generalConstraint}
	\end{equation}
	which is a particular form of the Poincar\'e-Hopf theorem.
	
	We start from the definition of degree~\eqref{eq:brouwerdeg}. It is important to note that, on each $\gamma_j$, the variation of the angle could be defined as the derivative of an arc-tangent. We have
	\begin{align*}
	d\phi &= \left( \frac{1}{1 + \left( \frac{v_1}{v_2} \right)^2} \right) \left( \frac{dv_1}{v_2} - \frac{dv_2 v_1}{v_2^2} \right)\\
	&= \frac{v_2^2}{v_1^2 + v_2^2} \left( \frac{dv_1}{v_2} - \frac{v_1}{v_2^2} dv_2 \right).
	\end{align*}
	On $\gamma_j$, supposing that the elements of the field have norm 1 on each circle $\gamma_j$, we have 
	\begin{equation*}
	d\phi = v_2 dv_1 -v_1 dv_2.
	\end{equation*}
	However, the degree of $u$ on each $ \gamma_j$ precisely equals to the closed integral of $d\phi$ on $ \gamma_j$ divided by $2\pi$. Thus, our general constraint \eqref{eq:generalConstraint} can be reformulated as follows:
	\begin{equation*}
	\frac{1}{2\pi} \sum_{j=1}^{N} \int_{\gamma_j} d\phi = \chi \deg(g, \partial G).
	\end{equation*}
	
	A formulation of the constraint imposed on a single hole will now be derived.
	On each $ \gamma_j$ we have local polar coordinates (see figure \ref{fig:localCoord}).
	\begin{equation*}
	r=\sqrt{v_1^2+v_2^2}  \text{ and }  \phi = \arctan\left( \frac{v_1}{v_2} \right)
	\end{equation*}
	where $v_1$ and $v_2$ are the Cartesian coordinates of the vector $v$ considered on $ \gamma_j$.
	
	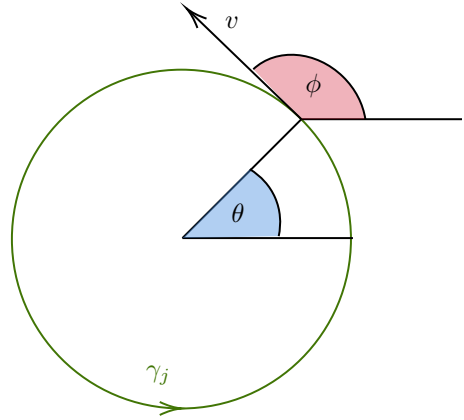
\begin{figure}[ht]
		\centering

		\tikzset{every picture/.style={line width=0.75pt}} %set default line width to 0.75pt        
		
		\begin{tikzpicture}[x=0.75pt,y=0.75pt,yscale=-1,xscale=1]
		%uncomment if require: \path (0,300); %set diagram left start at 0, and has height of 300
		
		%Shape: Circle [id:dp7982553092404495] 
		\draw  [color={rgb, 255:red, 65; green, 117; blue, 5 }  ,draw opacity=1 ] (224,131.5) .. controls (224,84.28) and (262.28,46) .. (309.5,46) .. controls (356.72,46) and (395,84.28) .. (395,131.5) .. controls (395,178.72) and (356.72,217) .. (309.5,217) .. controls (262.28,217) and (224,178.72) .. (224,131.5) -- cycle ;
		%Straight Lines [id:da23733261798997696] 
		\draw    (310,131) -- (396,131) ;

		%Straight Lines [id:da9962023652167233] 
		\draw    (310,131) -- (370,71) ;

		%Shape: Arc [id:dp9541892371233889] 
		\draw  [draw opacity=0][fill={rgb, 255:red, 74; green, 144; blue, 226 }  ,fill opacity=0.43 ] (358.57,130.03) .. controls (359.93,123.44) and (359.44,116.79) .. (356.77,110.66) .. controls (354.13,104.59) and (349.65,99.72) .. (343.98,96.23) -- (310,131) -- cycle ; \draw   (358.57,130.03) .. controls (359.93,123.44) and (359.44,116.79) .. (356.77,110.66) .. controls (354.13,104.59) and (349.65,99.72) .. (343.98,96.23) ;
		%Straight Lines [id:da2249353123374933] 
		\draw [color={rgb, 255:red, 0; green, 0; blue, 0 }  ,draw opacity=1 ][fill={rgb, 255:red, 0; green, 0; blue, 0 }  ,fill opacity=1 ]   (370,71) -- (451,71) ;

		%Shape: Arc [id:dp9747247666063398] 
		\draw  [draw opacity=0][fill={rgb, 255:red, 208; green, 2; blue, 27 }  ,fill opacity=0.3 ] (345.74,45.78) .. controls (357.44,34.77) and (377.42,36.33) .. (390.95,49.53) .. controls (397.29,55.71) and (401.11,63.37) .. (402.27,71) -- (370,71) -- cycle ; \draw  [color={rgb, 255:red, 0; green, 0; blue, 0 }  ,draw opacity=1 ] (345.74,45.78) .. controls (357.44,34.77) and (377.42,36.33) .. (390.95,49.53) .. controls (397.29,55.71) and (401.11,63.37) .. (402.27,71) ;
		%Straight Lines [id:da17649531618764192] 
		\draw [color={rgb, 255:red, 0; green, 0; blue, 0 }  ,draw opacity=1 ]   (370,71) -- (313.44,16.39) ;
		\draw [shift={(312,15)}, rotate = 403.99] [color={rgb, 255:red, 0; green, 0; blue, 0 }  ,draw opacity=1 ][line width=0.75]    (10.93,-3.29) .. controls (6.95,-1.4) and (3.31,-0.3) .. (0,0) .. controls (3.31,0.3) and (6.95,1.4) .. (10.93,3.29)   ;
		
		%Straight Lines [id:da882421962638485] 
		\draw [color={rgb, 255:red, 65; green, 117; blue, 5 }  ,draw opacity=1 ]   (309.5,217) ;
		\draw [shift={(309.5,217)}, rotate = 180] [color={rgb, 255:red, 65; green, 117; blue, 5 }  ,draw opacity=1 ][line width=0.75]    (10.93,-3.29) .. controls (6.95,-1.4) and (3.31,-0.3) .. (0,0) .. controls (3.31,0.3) and (6.95,1.4) .. (10.93,3.29)   ;

		% Text Node
		\draw (338,118) node   {$\theta $};
		% Text Node
		\draw (376,54) node   {$\phi $};
		% Text Node
		\draw (335,22) node   {$v$};
		% Text Node
		\draw (298,200) node   {$\textcolor[rgb]{0.25,0.46,0.02}{\gamma _{j}}$};

		\end{tikzpicture}

		\caption{Definition of local polar coordinates.}
		\label{fig:localCoord}
	\end{figure}

	With these new coordinates, $dr = 0$ on $ \gamma_j$. Furthermore, the derivatives of our vector components only involve partial derivatives with respect to the $\theta$ variable. For each $\gamma_j$, this leads to the following equation:
	\begin{equation*}
	d\phi = \left( v_2 \frac{\partial v_1}{\partial \theta} - v_1 \frac{\partial v_2}{\partial \theta} \right) d\theta,
	\end{equation*}
	where $\theta$ is the angle to the center of each $\gamma_j$.
	
	At this point, it is important to note that numerically, it will be necessary to pay particular attention to the orientation on $ \gamma_j$ in order to truly match the general constraint \eqref{eq:generalConstraint}.
	
	In practical terms, the following centered differences will be worked with, which can be understood by the figure \ref{fig:octo}:
	\begin{align*}
	&\left( \frac{\partial v_1}{\partial \theta} \right)^i = \frac{v_1^{i+1} - v_1^{i-1}}{2\Delta \theta},\\
	&\left( \frac{\partial v_2}{\partial \theta} \right)^i = \frac{v_2^{i+1} - v_2^{i-1}}{2\Delta \theta},
	\end{align*}
	
	\begin{align*}
	\left( \Delta \phi \right)^{i,i+1} &= \left(\frac{v_2^i \left( v_1^{i+1} - v_1^{i-1} \right) - v_1^i \left( v_2^{i+1} - v_2^{i-1} \right)}{2\Delta \theta} \right) \Delta \theta, \\
	& = \frac{1}{2} \left(v_2^i \left( v_1^{i+1} - v_1^{i-1} \right) - v_1^i \left( v_2^{i+1} - v_2^{i-1} \right)\right),
	\end{align*}
	where the indices placed in exponents indicate successive node numbers.
	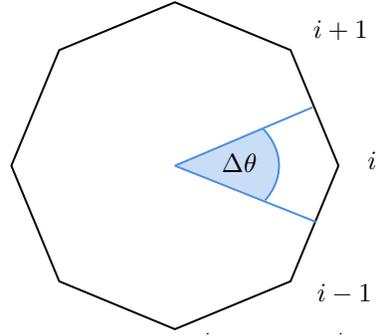
\begin{figure}[ht]
		\centering

		\tikzset{every picture/.style={line width=0.75pt}} %set default line width to 0.75pt        
		
		\begin{tikzpicture}[x=0.75pt,y=0.75pt,yscale=-1,xscale=1]
		%uncomment if require: \path (0,300); %set diagram left start at 0, and has height of 300
		
		%Shape: Regular Polygon [id:dp26806191934292656] 
		\draw   (390,156.5) -- (365.84,214.84) -- (307.5,239) -- (249.16,214.84) -- (225,156.5) -- (249.16,98.16) -- (307.5,74) -- (365.84,98.16) -- cycle ;
		%Straight Lines [id:da6914968999394927] 
		\draw [color={rgb, 255:red, 74; green, 144; blue, 226 }  ,draw opacity=1 ]   (307.5,156.5) -- (377,127) ;

		%Straight Lines [id:da10149458494591568] 
		\draw [color={rgb, 255:red, 74; green, 144; blue, 226 }  ,draw opacity=1 ]   (307.5,156.5) -- (379,185) ;

		%Shape: Arc [id:dp9050206369284926] 
		\draw  [draw opacity=0][fill={rgb, 255:red, 74; green, 144; blue, 226 }  ,fill opacity=0.3 ] (352.98,173.99) .. controls (357.45,168.85) and (360,162.87) .. (360,156.5) .. controls (360,149.49) and (356.91,142.96) .. (351.59,137.49) -- (307.5,156.5) -- cycle ; \draw  [color={rgb, 255:red, 74; green, 144; blue, 226 }  ,draw opacity=1 ] (352.98,173.99) .. controls (357.45,168.85) and (360,162.87) .. (360,156.5) .. controls (360,149.49) and (356.91,142.96) .. (351.59,137.49) ;
		
		% Text Node
		\draw (340,155) node   {$\Delta \theta $};
		% Text Node
		\draw (391,88) node   {$i+1$};
		% Text Node
		\draw (407,154) node   {$i$};
		% Text Node
		\draw (393,221) node   {$i-1$};

		\end{tikzpicture}
		\caption{Finite centered differences for $\left( \frac{\partial v_1}{\partial \theta} \right)^i$ and $ \left(\frac{\partial v_2}{\partial \theta} \right)^i$.}
		\label{fig:octo}
	\end{figure}

	Hence, for a hole $\gamma_j$ approximated by a regular polygon with $m$ edges, we have 
	\begin{equation*}
	\sum\limits_{i=1}^{m-1} \left( \Delta \phi \right)^{i,i+1} \approx \int_{\gamma_j} d\phi.
	\end{equation*}
	
	Finally, we have as a discrete approximation of the constraint equation \eqref{eq:generalConstraint}:
	\begin{multline*}
	\frac{1}{2\pi}\sum\limits_{j=1}^N \left( \sum\limits_{i=1}^{m-1} \left( \Delta \phi \right)^{i,i+1}  \right)_{\gamma_j} =\\
	\frac{1}{4\pi} \sum\limits_{j=1}^N \left( \sum\limits_{i=1}^{m-1} v_2^i \left( v_1^{i+1} - v_1^{i-1} \right) - v_1^i \left( v_2^{i+1} - v_2^{i-1} \right) \right)_{\gamma_j} 
	\\ \approx \chi \deg(g, \partial G).
	\end{multline*}
	
	In practice, we will separate the overall condition into one condition on each perforated hole in order to clearly choose the placement of the singularities.
	The resolution of a $\Delta u = 0$ with the imposition of conditions on the boundary crosses to align with $\partial G$ leads to a linear system 
	\begin{equation*}
	K x = b 
	\end{equation*}
	with $x = (v_1^1, v_2^1, \cdots v_1^n, v_2^n)$.
	
	We can rewrite the non-linear quadratic constraint as followed:
	\begin{equation*}
	x^TMx = \sum\limits_{i=1}^{2n} \sum\limits_{j=1}^{2n} m_{ij} x_i x_j = 2\pi \chi \deg(g, \partial G).
	\end{equation*}
	For example, if we have a single hole with nodes 4-38-39-40-41 on it, $M$ will have the following shape:
	\begin{table}[ht]
		\begin{center}
			\begin{adjustbox}{max width=.46\textwidth}
				\begin{tabular}{l|llllllllll}
					\cline{2-11}
					& \multicolumn{1}{l|}{\scriptsize $v_1^4$} & \multicolumn{1}{l|}{\scriptsize$v_2^4$} & \multicolumn{1}{l|}{\scriptsize $v_1^{38}$} & \multicolumn{1}{l|}{\scriptsize $v_2^{38}$} & \multicolumn{1}{l|}{\scriptsize$v_1^{39}$} & \multicolumn{1}{l|}{\scriptsize$v_2^{39}$} & \multicolumn{1}{l|}{\scriptsize$v_1^{40}$} & \multicolumn{1}{l|}{\scriptsize$v_2^{40}$} & \multicolumn{1}{l|}{\scriptsize$v_1^{41}$} & \multicolumn{1}{l|}{\scriptsize $v_2^{41}$} \\ \hline
					\multicolumn{1}{|l|}{\scriptsize$v_1^4$}    &                              &                              &                                 & -1                              &                                 &                                 &                                 &                                 &                                 & 1                               \\ \cline{1-1}
					\multicolumn{1}{|l|}{\scriptsize$v_2^4$}    &                              &                              & 1                               &                                 &                                 &                                 &                                 &                                 & -1                              &                                 \\ \cline{1-1}
					\multicolumn{1}{|l|}{\scriptsize$v_1^{38}$} &                              & 1                            &                                 &                                 &                                 & -1                              &                                 &                                 &                                 &                                 \\ \cline{1-1}
					\multicolumn{1}{|l|}{\scriptsize$v_2^{38}$} & -1                           &                              &                                 &                                 & 1                               &                                 &                                 &                                 &                                 &                                 \\ \cline{1-1}
					\multicolumn{1}{|l|}{\scriptsize$v_1^{39}$} &                              &                              &                                 & 1                               &                                 &                                 &                                 & -1                              &                                 &                                 \\ \cline{1-1}
					\multicolumn{1}{|l|}{\scriptsize$v_2^{39}$} &                              &                              & -1                              &                                 &                                 &                                 & 1                               &                                 &                                 &                                 \\ \cline{1-1}
					\multicolumn{1}{|l|}{\scriptsize$v_1^{40}$} &                              &                              &                                 &                                 &                                 & 1                               &                                 &                                 &                                 & -1                              \\ \cline{1-1}
					\multicolumn{1}{|l|}{\scriptsize$v_2^{40}$} &                              &                              &                                 &                                 & -1                              &                                 &                                 &                                 & 1                               &                                 \\ \cline{1-1}
					\multicolumn{1}{|l|}{\scriptsize$v_1^{41}$} &                              & -1                           &                                 &                                 &                                 &                                 &                                 & 1                               &                                 &                                 \\ \cline{1-1}
					\multicolumn{1}{|l|}{\scriptsize $v_2^{41}$} & 1                            &                              &                                 &                                 &                                 &                                 & -1                              &                                 &                                 &                                 \\ \cline{1-1}
				\end{tabular}
			\end{adjustbox}
		\end{center}
	\end{table}
	
	We then have
	\begin{multline*}
	x^TMx = \; v_2^4 v_1^{38} - v_2^4 v_1^{41} - v_1^4 v_2^{38} +v_1^4 v_2^{41} + v_2^{38} v_1^{39}\\ - v_2^{38} v_1^4 - v_1^{38}v_2^{39} 
	+ v_1^{38} v_2^4 + v_2^{39} v_1^{40}\\ - v_2^{39} v_1^{38} - v_1^{39} v_2^{40} + v_1^{39} v_2^{40} + v_1^{39} v_2^{38}\\ + v_2^{40} v_1^{41}
	- v_2^{40} v_1^{39} - v_1^{40}v_2^{41} + v_1^{40}v_2^{39}\\ + v_2^{41} v_1^4 - v_2^{41} v_1^{40} - v_1^{41} v_2^4 + v_1^{41} v_2^{40}.
	\end{multline*}
	
	In order to solve this problem, a Newton-Raphson scheme is computed with the utilization of a Lagrange multiplier for each of the constraint of each of our perforated holes. We then have each constraint $C_{\gamma_i}$ on each $\gamma_i$ and its derivatives which are described by
	\[
	C_{\gamma_i} := \lambda_i \left( x^T M x - 2\pi \deg(u, \gamma_i) \right),
	\]
	\[
	\frac{\partial C_{\gamma_i}}{\partial \lambda_i} = x^T M x - 2\pi \deg(u, \gamma_i),
	\]
	\[
	\frac{\partial C_{\gamma_i}}{\partial x} = 2 \lambda_i Mx,
	\]
	\[
	\frac{\partial^2 C_{\gamma_i}}{\partial \lambda_i \partial x} = 2Mx,
	\]
	\[
	\frac{\partial^2 C_{\gamma_i}}{\partial \lambda_i \partial \lambda_i} = 0,
	\]
	and
	\[
	\frac{\partial^2 C_{\gamma_i}}{\partial x \partial x} = 2M.
	\]
	The gradient of our problem becomes
	\[
	\tilde{b} = [b +  2 \lambda_i Mx, \; x^T M x - 2\pi \deg(u, \gamma_i)]^T
	\]
	and the Hessian becomes
	\[
	H = \begin{bmatrix}
	K + 2M & 2Mx \\
	(2Mx)^T  & 0
	\end{bmatrix}.
	\]
	We solve this via Newton-Raphson scheme until convergence and if the Hessian is singular at a particular iteration, we set for this particular iteration : 
	\[\frac{\partial^2 C_{\gamma_i}}{\partial \lambda_i \partial \lambda_i}=1.
	\]
	
	\subsection{New computations for placing boundary singularities}\label{GLPiecewise}
	
	% \subsubsection{Theory: singular energy minimization in case of angular domains}
	When cross fields are computed on piecewise smooth closed domains, in addition to the internal singularities we have already discussed, boundary singularities may appear on the boundary corners. 
	The corner angles to be considered correspond to the total angle formed around the boundary corner inside the domain.
	Thus, an outgoing corner will correspond to an angle between $0$ and $\pi$ radians while an incoming corner will correspond to an angle between $\pi$ and $2\pi$ radians. 
	
	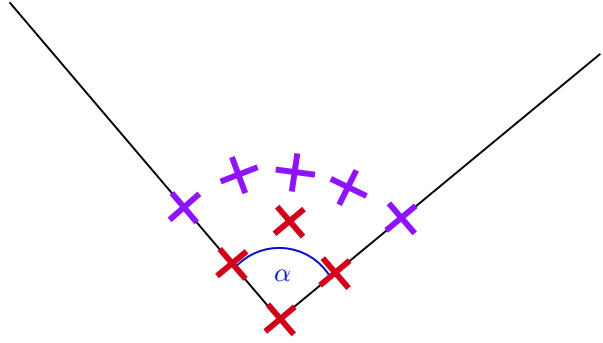
\begin{figure}[H]
		\begin{center}

			\tikzset{every picture/.style={line width=0.75pt}} %set default line width to 0.75pt        
			
			\begin{tikzpicture}[x=0.75pt,y=0.75pt,yscale=-1,xscale=1]
			%uncomment if require: \path (0,300); %set diagram left start at 0, and has height of 300
			
			%Straight Lines [id:da9136489217121772] 
			\draw    (64,61) -- (200,221) ;

			%Straight Lines [id:da05077191331855779] 
			\draw    (200,221) -- (362,87) ;

			%Shape: Arc [id:dp40266938749543135] 
			\draw  [draw opacity=0] (178.24,194.35) .. controls (184.1,188.18) and (192.53,184.53) .. (201.69,185.05) .. controls (211.9,185.62) and (220.64,191.25) .. (225.61,199.36) -- (200,215) -- cycle ; \draw  [color={rgb, 255:red, 2; green, 10; blue, 208 }  ,draw opacity=1 ] (178.24,194.35) .. controls (184.1,188.18) and (192.53,184.53) .. (201.69,185.05) .. controls (211.9,185.62) and (220.64,191.25) .. (225.61,199.36) ;
			%Straight Lines [id:da7695742396525193] 
			\draw [color={rgb, 255:red, 144; green, 19; blue, 254 }  ,draw opacity=1 ][line width=2.25]    (159.77,157.68) -- (144.7,170.82) ;

			%Straight Lines [id:da011859614007216912] 
			\draw [color={rgb, 255:red, 144; green, 19; blue, 254 }  ,draw opacity=1 ][line width=2.25]    (145.85,157.26) -- (158.41,172.09) ;

			%Straight Lines [id:da3156289741709519] 
			\draw [color={rgb, 255:red, 144; green, 19; blue, 254 }  ,draw opacity=1 ][line width=2.25]    (189.11,144.27) -- (170.45,151.47) ;

			%Straight Lines [id:da5037422870900554] 
			\draw [color={rgb, 255:red, 144; green, 19; blue, 254 }  ,draw opacity=1 ][line width=2.25]    (176.17,139.12) -- (182.9,157.35) ;

			%Straight Lines [id:da4461674294981194] 
			\draw [color={rgb, 255:red, 144; green, 19; blue, 254 }  ,draw opacity=1 ][line width=2.25]    (218.07,148.02) -- (198.24,145.43) ;

			%Straight Lines [id:da5402010606701668] 
			\draw [color={rgb, 255:red, 144; green, 19; blue, 254 }  ,draw opacity=1 ][line width=2.25]    (209.16,137.31) -- (206.37,156.54) ;

			%Straight Lines [id:da09077789898622435] 
			\draw [color={rgb, 255:red, 144; green, 19; blue, 254 }  ,draw opacity=1 ][line width=2.25]    (244.03,158.77) -- (226,150.11) ;

			%Straight Lines [id:da7691694694233236] 
			\draw [color={rgb, 255:red, 144; green, 19; blue, 254 }  ,draw opacity=1 ][line width=2.25]    (238.92,145.82) -- (230.25,163.21) ;

			%Straight Lines [id:da2929592502160311] 
			\draw [color={rgb, 255:red, 144; green, 19; blue, 254 }  ,draw opacity=1 ][line width=2.25]    (268.37,177.64) -- (255.32,162.48) ;

			%Straight Lines [id:da2458662181340291] 
			\draw [color={rgb, 255:red, 144; green, 19; blue, 254 }  ,draw opacity=1 ][line width=2.25]    (268.87,163.72) -- (253.96,176.18) ;

			%Straight Lines [id:da03212640907748965] 
			\draw [color={rgb, 255:red, 208; green, 2; blue, 27 }  ,draw opacity=1 ][line width=2.25]    (235.03,205.31) -- (221.99,190.15) ;

			%Straight Lines [id:da39402453602697707] 
			\draw [color={rgb, 255:red, 208; green, 2; blue, 27 }  ,draw opacity=1 ][line width=2.25]    (235.54,191.39) -- (220.63,203.85) ;

			%Straight Lines [id:da7021010969415014] 
			\draw [color={rgb, 255:red, 208; green, 2; blue, 27 }  ,draw opacity=1 ][line width=2.25]    (183.03,200.64) -- (169.99,185.48) ;

			%Straight Lines [id:da4409871578896002] 
			\draw [color={rgb, 255:red, 208; green, 2; blue, 27 }  ,draw opacity=1 ][line width=2.25]    (183.54,186.72) -- (168.63,199.18) ;

			%Straight Lines [id:da7740309757238505] 
			\draw [color={rgb, 255:red, 208; green, 2; blue, 27 }  ,draw opacity=1 ][line width=2.25]    (207.37,228.64) -- (194.32,213.48) ;

			%Straight Lines [id:da3228002824525311] 
			\draw [color={rgb, 255:red, 208; green, 2; blue, 27 }  ,draw opacity=1 ][line width=2.25]    (207.87,214.72) -- (192.96,227.18) ;

			%Straight Lines [id:da2959562242957292] 
			\draw [color={rgb, 255:red, 208; green, 2; blue, 27 }  ,draw opacity=1 ][line width=2.25]    (212.03,179.31) -- (198.99,164.15) ;

			%Straight Lines [id:da5956835377649442] 
			\draw [color={rgb, 255:red, 208; green, 2; blue, 27 }  ,draw opacity=1 ][line width=2.25]    (212.54,165.39) -- (197.63,177.85) ;

			% Text Node
			\draw (201.8,198.6) node  [color={rgb, 255:red, 2; green, 16; blue, 208 }  ,opacity=1 ] [align=left] {$\displaystyle \alpha $};

			\end{tikzpicture}
			
		\end{center}
		\caption{Comparison of the presence of a boundary singularity of degree $1/4$ in red on the apex of the $\alpha (\approx \pi/2)$ angle or the absence of singularity in purple.}
		\label{fig:angleCrosses}
	\end{figure}
	
	A boundary singularity implies a rotation of the boundary cross compared to the local outward normal at the domain boundaries. For example, if around a boundary corner, the border cross rotation is $\frac{\pi}{2}$ compared to the local outward normal, we say that a $1/4$ boundary singularity has been placed on this corner. The presence or the absence of a singularity on such an angle implies different rotations for the computed cross field (see Figure \ref{fig:angleCrosses}) since the field inside the domain must be smooth.
	
	The boundary corners are usually considered as singularity  for angle multiple of k$\pi/2$, considering that a singularity of index 0 is in fact a regular cross. For example the four corners of a square correspond to four 1/4 boundary singularities thus the Poincaré-Hopf theorem is respected without internal singularities.
	The conventional distribution proposed in the literature corresponds to the one given on Table \ref{tab:angles} with different tolerances around the given angles.
	
	\begin{table}[H]
		\centering
		\begin{tabular}{l|c|c|c|c|l}
			
			angles (in radians)         & $2\pi $ & $\frac{3\pi}{2}$ & $\pi$ & $\frac{\pi}{2}$ & 0\\[.2cm]
			index of the singularity & $-\frac{2}{4}$                             & $-\frac{1}{4}$                                                                                                             & $0$                                                                                               & $\frac{1}{4}$  & one boundary $\frac{1}{4}$ and one internal $\frac{1}{4}$                                                                   \\[.2cm]
		\end{tabular}
		\caption{Usual singularities angles distribution.}
		\label{tab:angles}
	\end{table}
	
	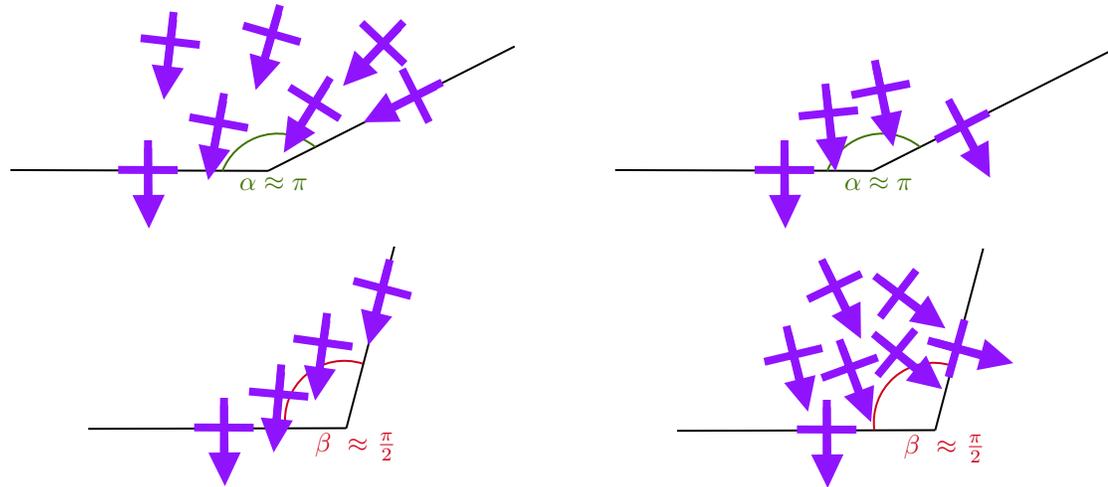
\begin{figure}[ht] 
		\centering

		\tikzset{every picture/.style={line width=0.75pt}} %set default line width to 0.75pt        
		
		\begin{tikzpicture}[x=0.75pt,y=0.75pt,yscale=-1,xscale=1]
		%uncomment if require: \path (0,300); %set diagram left start at 0, and has height of 300
		
		%Straight Lines [id:da4319569685214921] 
		\draw    (50.2,89.4) -- (180.2,89.8) ;
		%Straight Lines [id:da9908683688048057] 
		\draw    (180.2,89.8) -- (304.5,27) ;
		%Shape: Arc [id:dp7670093423049111] 
		\draw  [draw opacity=0] (157.19,89.74) .. controls (160.19,82.35) and (166.1,76.16) .. (174.12,73.04) .. controls (184.55,68.98) and (195.87,71.15) .. (203.97,77.75) -- (185,101) -- cycle ; \draw  [color={rgb, 255:red, 65; green, 117; blue, 5 }  ,draw opacity=1 ] (157.19,89.74) .. controls (160.19,82.35) and (166.1,76.16) .. (174.12,73.04) .. controls (184.55,68.98) and (195.87,71.15) .. (203.97,77.75) ;
		%Straight Lines [id:da7953096667468448] 
		\draw [color={rgb, 255:red, 144; green, 19; blue, 254 }  ,draw opacity=1 ][line width=3]    (119.64,78.07) -- (119.99,112.92) ;
		\draw [shift={(120.06,118.92)}, rotate = 269.40999999999997] [fill={rgb, 255:red, 144; green, 19; blue, 254 }  ,fill opacity=1 ][line width=0.08]  [draw opacity=0] (16.97,-8.15) -- (0,0) -- (16.97,8.15) -- cycle    ;
		\draw  [color={rgb, 255:red, 144; green, 19; blue, 254 }  ,draw opacity=1 ][line width=3]  (134.56,89.42) -- (104.56,89.3)(119.5,104.36) -- (119.62,74.36) ;
		%Straight Lines [id:da6679351436450531] 
		\draw [color={rgb, 255:red, 144; green, 19; blue, 254 }  ,draw opacity=1 ][line width=3]    (264.74,46.98) -- (233.59,62.61) ;
		\draw [shift={(228.23,65.3)}, rotate = 333.36] [fill={rgb, 255:red, 144; green, 19; blue, 254 }  ,fill opacity=1 ][line width=0.08]  [draw opacity=0] (16.97,-8.15) -- (0,0) -- (16.97,8.15) -- cycle    ;
		\draw  [color={rgb, 255:red, 144; green, 19; blue, 254 }  ,draw opacity=1 ][line width=3]  (261.34,65.26) -- (247.79,38.49)(241.18,58.64) -- (267.95,45.1) ;
		%Straight Lines [id:da6770625136984745] 
		\draw    (89.46,220) -- (219.64,219.67) ;
		%Straight Lines [id:da5599018321510362] 
		\draw    (219.64,219.67) -- (243.83,128) ;
		%Straight Lines [id:da4788531850440837] 
		\draw [color={rgb, 255:red, 144; green, 19; blue, 254 }  ,draw opacity=1 ][line width=3]    (157.97,208.07) -- (158.33,242.92) ;
		\draw [shift={(158.39,248.92)}, rotate = 269.40999999999997] [fill={rgb, 255:red, 144; green, 19; blue, 254 }  ,fill opacity=1 ][line width=0.08]  [draw opacity=0] (16.97,-8.15) -- (0,0) -- (16.97,8.15) -- cycle    ;
		\draw  [color={rgb, 255:red, 144; green, 19; blue, 254 }  ,draw opacity=1 ][line width=3]  (172.89,219.42) -- (142.89,219.3)(157.83,234.36) -- (157.95,204.36) ;
		%Straight Lines [id:da23409139624904685] 
		\draw [color={rgb, 255:red, 144; green, 19; blue, 254 }  ,draw opacity=1 ][line width=3]    (240.66,138.23) -- (232.21,172.04) ;
		\draw [shift={(230.76,177.87)}, rotate = 284.02] [fill={rgb, 255:red, 144; green, 19; blue, 254 }  ,fill opacity=1 ][line width=0.08]  [draw opacity=0] (16.97,-8.15) -- (0,0) -- (16.97,8.15) -- cycle    ;
		\draw  [color={rgb, 255:red, 144; green, 19; blue, 254 }  ,draw opacity=1 ][line width=3]  (252.23,152.98) -- (223.23,145.3)(233.89,163.64) -- (241.57,134.64) ;
		%Shape: Arc [id:dp11424793690370905] 
		\draw  [draw opacity=0] (188.93,219.66) .. controls (187.64,209.96) and (191.11,199.83) .. (199.06,192.96) .. controls (207.26,185.88) and (218.19,184.02) .. (227.8,187.09) -- (218.67,215.67) -- cycle ; \draw  [color={rgb, 255:red, 208; green, 2; blue, 27 }  ,draw opacity=1 ] (188.93,219.66) .. controls (187.64,209.96) and (191.11,199.83) .. (199.06,192.96) .. controls (207.26,185.88) and (218.19,184.02) .. (227.8,187.09) ;
		%Straight Lines [id:da19834326756155085] 
		\draw [color={rgb, 255:red, 144; green, 19; blue, 254 }  ,draw opacity=1 ][line width=3]    (209.78,46.05) -- (191.26,75.58) ;
		\draw [shift={(188.07,80.66)}, rotate = 302.09000000000003] [fill={rgb, 255:red, 144; green, 19; blue, 254 }  ,fill opacity=1 ][line width=0.08]  [draw opacity=0] (16.97,-8.15) -- (0,0) -- (16.97,8.15) -- cycle    ;
		\draw  [color={rgb, 255:red, 144; green, 19; blue, 254 }  ,draw opacity=1 ][line width=3]  (216.35,63.44) -- (190.88,47.59)(195.69,68.25) -- (211.54,42.78) ;
		%Straight Lines [id:da8806474773452976] 
		\draw [color={rgb, 255:red, 144; green, 19; blue, 254 }  ,draw opacity=1 ][line width=3]    (157.54,54.37) -- (151.07,88.62) ;
		\draw [shift={(149.96,94.51)}, rotate = 280.69] [fill={rgb, 255:red, 144; green, 19; blue, 254 }  ,fill opacity=1 ][line width=0.08]  [draw opacity=0] (16.97,-8.15) -- (0,0) -- (16.97,8.15) -- cycle    ;
		\draw  [color={rgb, 255:red, 144; green, 19; blue, 254 }  ,draw opacity=1 ][line width=3]  (170,68.16) -- (140.5,62.7)(152.52,80.18) -- (157.98,50.68) ;
		%Straight Lines [id:da21624715325280275] 
		\draw [color={rgb, 255:red, 144; green, 19; blue, 254 }  ,draw opacity=1 ][line width=3]    (209.64,165.31) -- (204.75,199.82) ;
		\draw [shift={(203.9,205.76)}, rotate = 278.07] [fill={rgb, 255:red, 144; green, 19; blue, 254 }  ,fill opacity=1 ][line width=0.08]  [draw opacity=0] (16.97,-8.15) -- (0,0) -- (16.97,8.15) -- cycle    ;
		\draw  [color={rgb, 255:red, 144; green, 19; blue, 254 }  ,draw opacity=1 ][line width=3]  (222.72,178.52) -- (193,174.41)(205.81,191.32) -- (209.91,161.6) ;
		%Straight Lines [id:da6628675658181477] 
		\draw [color={rgb, 255:red, 144; green, 19; blue, 254 }  ,draw opacity=1 ][line width=3]    (186.74,191.25) -- (183.54,225.95) ;
		\draw [shift={(182.98,231.92)}, rotate = 275.28] [fill={rgb, 255:red, 144; green, 19; blue, 254 }  ,fill opacity=1 ][line width=0.08]  [draw opacity=0] (16.97,-8.15) -- (0,0) -- (16.97,8.15) -- cycle    ;
		\draw  [color={rgb, 255:red, 144; green, 19; blue, 254 }  ,draw opacity=1 ][line width=3]  (200.45,203.8) -- (170.57,201.14)(184.18,217.41) -- (186.84,187.53) ;
		%Straight Lines [id:da9639717009787692] 
		\draw [color={rgb, 255:red, 144; green, 19; blue, 254 }  ,draw opacity=1 ][line width=3]    (132.33,13.32) -- (128.26,47.93) ;
		\draw [shift={(127.56,53.89)}, rotate = 276.71] [fill={rgb, 255:red, 144; green, 19; blue, 254 }  ,fill opacity=1 ][line width=0.08]  [draw opacity=0] (16.97,-8.15) -- (0,0) -- (16.97,8.15) -- cycle    ;
		\draw  [color={rgb, 255:red, 144; green, 19; blue, 254 }  ,draw opacity=1 ][line width=3]  (145.72,26.21) -- (115.91,22.81)(129.12,39.42) -- (132.52,9.61) ;
		%Straight Lines [id:da5296872685487715] 
		\draw [color={rgb, 255:red, 144; green, 19; blue, 254 }  ,draw opacity=1 ][line width=3]    (185.38,9.83) -- (175.6,43.28) ;
		\draw [shift={(173.92,49.04)}, rotate = 286.29] [fill={rgb, 255:red, 144; green, 19; blue, 254 }  ,fill opacity=1 ][line width=0.08]  [draw opacity=0] (16.97,-8.15) -- (0,0) -- (16.97,8.15) -- cycle    ;
		\draw  [color={rgb, 255:red, 144; green, 19; blue, 254 }  ,draw opacity=1 ][line width=3]  (196.43,24.77) -- (167.61,16.46)(177.86,35.03) -- (186.18,6.2) ;
		%Straight Lines [id:da5649492233039999] 
		\draw [color={rgb, 255:red, 144; green, 19; blue, 254 }  ,draw opacity=1 ][line width=3]    (246.15,17.42) -- (222.07,42.61) ;
		\draw [shift={(217.92,46.95)}, rotate = 313.71000000000004] [fill={rgb, 255:red, 144; green, 19; blue, 254 }  ,fill opacity=1 ][line width=0.08]  [draw opacity=0] (16.97,-8.15) -- (0,0) -- (16.97,8.15) -- cycle    ;
		\draw  [color={rgb, 255:red, 144; green, 19; blue, 254 }  ,draw opacity=1 ][line width=3]  (249.08,35.78) -- (227.33,15.12)(227.88,36.33) -- (248.53,14.57) ;
		%Straight Lines [id:da91084049762993] 
		\draw    (355.2,89.4) -- (485.2,89.8) ;
		%Straight Lines [id:da24306807822345822] 
		\draw    (485.2,89.8) -- (609.5,27) ;
		%Shape: Arc [id:dp4551390829719395] 
		\draw  [draw opacity=0] (462.19,89.74) .. controls (465.19,82.35) and (471.1,76.16) .. (479.12,73.04) .. controls (489.55,68.98) and (500.87,71.15) .. (508.97,77.75) -- (490,101) -- cycle ; \draw  [color={rgb, 255:red, 65; green, 117; blue, 5 }  ,draw opacity=1 ] (462.19,89.74) .. controls (465.19,82.35) and (471.1,76.16) .. (479.12,73.04) .. controls (489.55,68.98) and (500.87,71.15) .. (508.97,77.75) ;
		%Straight Lines [id:da6227147429297112] 
		\draw [color={rgb, 255:red, 144; green, 19; blue, 254 }  ,draw opacity=1 ][line width=3]    (440.64,78.07) -- (440.99,112.92) ;
		\draw [shift={(441.06,118.92)}, rotate = 269.40999999999997] [fill={rgb, 255:red, 144; green, 19; blue, 254 }  ,fill opacity=1 ][line width=0.08]  [draw opacity=0] (16.97,-8.15) -- (0,0) -- (16.97,8.15) -- cycle    ;
		\draw  [color={rgb, 255:red, 144; green, 19; blue, 254 }  ,draw opacity=1 ][line width=3]  (455.56,89.42) -- (425.56,89.3)(440.5,104.36) -- (440.62,74.36) ;
		%Straight Lines [id:da0727999166729918] 
		\draw [color={rgb, 255:red, 144; green, 19; blue, 254 }  ,draw opacity=1 ][line width=3]    (525.04,57.2) -- (541.46,87.95) ;
		\draw [shift={(544.29,93.24)}, rotate = 241.9] [fill={rgb, 255:red, 144; green, 19; blue, 254 }  ,fill opacity=1 ][line width=0.08]  [draw opacity=0] (16.97,-8.15) -- (0,0) -- (16.97,8.15) -- cycle    ;
		\draw  [color={rgb, 255:red, 144; green, 19; blue, 254 }  ,draw opacity=1 ][line width=3]  (543.52,60.38) -- (516.86,74.13)(537.07,80.59) -- (523.31,53.92) ;
		%Straight Lines [id:da38042972408468345] 
		\draw [color={rgb, 255:red, 144; green, 19; blue, 254 }  ,draw opacity=1 ][line width=3]    (486.93,37.58) -- (494.21,71.66) ;
		\draw [shift={(495.46,77.53)}, rotate = 257.94] [fill={rgb, 255:red, 144; green, 19; blue, 254 }  ,fill opacity=1 ][line width=0.08]  [draw opacity=0] (16.97,-8.15) -- (0,0) -- (16.97,8.15) -- cycle    ;
		\draw  [color={rgb, 255:red, 144; green, 19; blue, 254 }  ,draw opacity=1 ][line width=3]  (503.81,45.74) -- (474.39,51.58)(492.02,63.37) -- (486.17,33.95) ;
		%Straight Lines [id:da8892504038110778] 
		\draw [color={rgb, 255:red, 144; green, 19; blue, 254 }  ,draw opacity=1 ][line width=3]    (461.63,49.5) -- (465.67,84.12) ;
		\draw [shift={(466.36,90.08)}, rotate = 263.35] [fill={rgb, 255:red, 144; green, 19; blue, 254 }  ,fill opacity=1 ][line width=0.08]  [draw opacity=0] (16.97,-8.15) -- (0,0) -- (16.97,8.15) -- cycle    ;
		\draw  [color={rgb, 255:red, 144; green, 19; blue, 254 }  ,draw opacity=1 ][line width=3]  (477.67,59.22) -- (447.82,62.26)(464.27,75.66) -- (461.22,45.82) ;
		%Straight Lines [id:da9057583555298049] 
		\draw    (386.46,221) -- (516.64,220.67) ;
		%Straight Lines [id:da4861166304615411] 
		\draw    (516.64,220.67) -- (540.83,129) ;
		%Shape: Arc [id:dp7552844646253741] 
		\draw  [draw opacity=0] (485.93,220.66) .. controls (484.64,210.96) and (488.11,200.83) .. (496.06,193.96) .. controls (504.26,186.88) and (515.19,185.02) .. (524.8,188.09) -- (515.67,216.67) -- cycle ; \draw  [color={rgb, 255:red, 208; green, 2; blue, 27 }  ,draw opacity=1 ] (485.93,220.66) .. controls (484.64,210.96) and (488.11,200.83) .. (496.06,193.96) .. controls (504.26,186.88) and (515.19,185.02) .. (524.8,188.09) ;
		%Straight Lines [id:da30285088741806543] 
		\draw [color={rgb, 255:red, 144; green, 19; blue, 254 }  ,draw opacity=1 ][line width=3]    (461.97,209.07) -- (462.33,243.92) ;
		\draw [shift={(462.39,249.92)}, rotate = 269.40999999999997] [fill={rgb, 255:red, 144; green, 19; blue, 254 }  ,fill opacity=1 ][line width=0.08]  [draw opacity=0] (16.97,-8.15) -- (0,0) -- (16.97,8.15) -- cycle    ;
		\draw  [color={rgb, 255:red, 144; green, 19; blue, 254 }  ,draw opacity=1 ][line width=3]  (476.89,220.42) -- (446.89,220.3)(461.83,235.36) -- (461.95,205.36) ;
		%Straight Lines [id:da745625887057227] 
		\draw [color={rgb, 255:red, 144; green, 19; blue, 254 }  ,draw opacity=1 ][line width=3]    (516.53,176.39) -- (550.22,185.33) ;
		\draw [shift={(556.02,186.87)}, rotate = 194.86] [fill={rgb, 255:red, 144; green, 19; blue, 254 }  ,fill opacity=1 ][line width=0.08]  [draw opacity=0] (16.97,-8.15) -- (0,0) -- (16.97,8.15) -- cycle    ;
		\draw  [color={rgb, 255:red, 144; green, 19; blue, 254 }  ,draw opacity=1 ][line width=3]  (531.45,165.03) -- (523.34,193.92)(541.84,183.53) -- (512.95,175.42) ;
		%Straight Lines [id:da4829288625887497] 
		\draw [color={rgb, 255:red, 144; green, 19; blue, 254 }  ,draw opacity=1 ][line width=3]    (488.32,174.38) -- (515.5,196.2) ;
		\draw [shift={(520.18,199.95)}, rotate = 218.74] [fill={rgb, 255:red, 144; green, 19; blue, 254 }  ,fill opacity=1 ][line width=0.08]  [draw opacity=0] (16.97,-8.15) -- (0,0) -- (16.97,8.15) -- cycle    ;
		\draw  [color={rgb, 255:red, 144; green, 19; blue, 254 }  ,draw opacity=1 ][line width=3]  (506.56,170.04) -- (487.45,193.17)(508.57,191.16) -- (485.44,172.05) ;
		%Straight Lines [id:da07627332777970108] 
		\draw [color={rgb, 255:red, 144; green, 19; blue, 254 }  ,draw opacity=1 ][line width=3]    (469.63,178.53) -- (482.14,211.06) ;
		\draw [shift={(484.29,216.66)}, rotate = 248.95999999999998] [fill={rgb, 255:red, 144; green, 19; blue, 254 }  ,fill opacity=1 ][line width=0.08]  [draw opacity=0] (16.97,-8.15) -- (0,0) -- (16.97,8.15) -- cycle    ;
		\draw  [color={rgb, 255:red, 144; green, 19; blue, 254 }  ,draw opacity=1 ][line width=3]  (487.57,183.96) -- (459.42,194.33)(478.68,203.22) -- (468.31,175.07) ;
		%Straight Lines [id:da8106071925604755] 
		\draw [color={rgb, 255:red, 144; green, 19; blue, 254 }  ,draw opacity=1 ][line width=3]    (461.02,137.97) -- (476.53,169.18) ;
		\draw [shift={(479.2,174.56)}, rotate = 243.57] [fill={rgb, 255:red, 144; green, 19; blue, 254 }  ,fill opacity=1 ][line width=0.08]  [draw opacity=0] (16.97,-8.15) -- (0,0) -- (16.97,8.15) -- cycle    ;
		\draw  [color={rgb, 255:red, 144; green, 19; blue, 254 }  ,draw opacity=1 ][line width=3]  (479.39,141.69) -- (452.34,154.65)(472.35,161.7) -- (459.38,134.64) ;
		%Straight Lines [id:da9723260187518871] 
		\draw [color={rgb, 255:red, 144; green, 19; blue, 254 }  ,draw opacity=1 ][line width=3]    (489.01,144.97) -- (516.98,165.77) ;
		\draw [shift={(521.79,169.35)}, rotate = 216.64] [fill={rgb, 255:red, 144; green, 19; blue, 254 }  ,fill opacity=1 ][line width=0.08]  [draw opacity=0] (16.97,-8.15) -- (0,0) -- (16.97,8.15) -- cycle    ;
		\draw  [color={rgb, 255:red, 144; green, 19; blue, 254 }  ,draw opacity=1 ][line width=3]  (507.08,139.96) -- (488.83,163.77)(509.86,160.99) -- (486.05,142.75) ;
		%Straight Lines [id:da10016253063749248] 
		\draw [color={rgb, 255:red, 144; green, 19; blue, 254 }  ,draw opacity=1 ][line width=3]    (442.49,171.73) -- (451.01,205.52) ;
		\draw [shift={(452.47,211.34)}, rotate = 255.86] [fill={rgb, 255:red, 144; green, 19; blue, 254 }  ,fill opacity=1 ][line width=0.08]  [draw opacity=0] (16.97,-8.15) -- (0,0) -- (16.97,8.15) -- cycle    ;
		\draw  [color={rgb, 255:red, 144; green, 19; blue, 254 }  ,draw opacity=1 ][line width=3]  (459.66,179.27) -- (430.47,186.18)(448.52,197.32) -- (441.61,168.13) ;
		
		% Text Node
		\draw (183,96) node    {$\textcolor[rgb]{0.25,0.46,0.02}{\alpha \approx \pi }$};
		% Text Node
		\draw (224.33,229) node    {$\textcolor[rgb]{0.82,0.01,0.11}{\beta \ \approx \ }\textcolor[rgb]{0.82,0.01,0.11}{\frac{\pi }{2}}$};
		% Text Node
		\draw (488,96) node    {$\textcolor[rgb]{0.25,0.46,0.02}{\alpha \approx \pi }$};
		% Text Node
		\draw (521.33,230) node    {$\textcolor[rgb]{0.82,0.01,0.11}{\beta \ \approx \ }\textcolor[rgb]{0.82,0.01,0.11}{\frac{\pi }{2}}$};

		\end{tikzpicture}

		\caption{
			On the above left diagram, a singularity $1/4$ has been placed around an angle $\alpha$ close to $\pi$. On the above right diagram, no singularity has been placed around the same angle $\alpha$. We see that for this angle $\alpha$, placing no, i.d. a zero degree, boundary singularity on it implies less rotations on the computed cross field.
			Conversely, on the below left diagram, a singularity $1/4$ has been placed around an angle $\beta$ close to $\frac{\pi}{2}$. On the below right diagram, no singularity has been placed around the same angle $\beta$. We see that for this angle $\beta$, placing a 1/4 boundary singularity on it implies less rotations on the computed cross field.}
		\label{fig:WithOrWithout} 
	\end{figure}
	
	To deal with boundary singularities there are two possibilities: to place the singularities as we do with the problem described in the section about further thoughts~\ref{FurThou} or to focus only on the internal singularities but to count the boundary singularities via some boundary angular jumps as we will do in the section where we give a new interpretation of the Poincaré-Hopf theorem \ref{GenPH}.
	
	The angles proposed in the literature to place boundary singularities only try to limit the rotation of the cross field around these boundary corners as it is illustrated on  Figure~\ref{fig:WithOrWithout}.
	However, to respect the Poincaré-Hopf theorem while limiting the rotation on a computed cross field, we believe that internal singularities must be taken into account.
	
	In this section, informal calculations are used to justify the range of boundary angles that are considered as singularities as well as their orders depending on the boundary angles and the computational geometry. We believe that the proposed energy to be minimized is a natural extension of the Ginzburg-Landau energy for angular domains.
	This energy let the internal singularities affect the angle ranges on which a boundary singularity must be placed in order to minimize this boundary Ginzburg-Landau energy.
	The classical k$\pi/2$ angles as boundary singularities belongs to the expected singularity degrees. As we will see, the proposed ranges depend on whether there are too many or too few boundary singularities to match the Euler-Poincaré characteristic of the domain. Intuitively, if we have not enough positive boundary singularities to match the Euler-Poincaré characteristic, we must place boundary positive singularities on larger range of angles and vice-versa. Keep in mind that the sums of internal and boundary singularities are strongly linked by the Poincaré-Hopf theorem \eqref{eq:PoinHopf}.
	
	We will see three scenarios appear. 
	In terms of implementation, the idea is to compute the energy implied by those three scenarios and to choose the ranges of angle for boundary singularities that correspond to the scenario of less energy.
	
	Note that for quad mesh purpose, boundary singularity of degree 1/2 are of no interest and must be replaced by one 1/4 boundary singularity and one 1/4 internal singularity close to the boundary one.
	
	\subsubsection{Ginzburg-Landau energy for piecewise-smooth closed boundaries}
	
	Given a domain $\Omega$ with a piecewise $C^1$ boundary $\partial \Omega$, we consider a cross field, whose elements are in $SO(2)/C_4$, where $C_4$ is the rotation group of the square (square is preserved by rotation of $\{\frac{j}{4}2\pi | j \in \{0, ..., 3 \} \}$).\
	
	We assume that the angles formed at the boundary of $\Omega$ are $\alpha_1, \ldots, \alpha_m$ and taken between $0$ and $2\pi$. 
	We let $k_1, \ldots, k_m$ be the indices of the singularities inserted in those angles.

	Note that this may include singularities of index $0$ which are just a regular crosses. To respect the Poincar\'e-Hopf theorem, certain singularities have to be placed in the domain. We let $l_1, \ldots l_n$ be their indices.
	
	On one hand, the singular energy induced by each featured angle $\alpha_i$ of the boundary is given by
	\[
	\sum\limits_{i=1}^m \frac{(\pi - \alpha_i - k_i 2\pi )^2}{2 \alpha_i}.
	\]
	The idea is that the rotational energy needed to not place a singularity on some angle is given by the linear variation induced on the cross. This variation is squared and normed with the size of the angle in order to construct an energy. The $1/2$ term is arbitrary and stems from the fact that we described some fictive kinetic energy.
	This way, placing a 1/4 singularity on a flat edge will cost: $\frac{\pi^2}{4} \frac{1}{2\pi} = \frac{\pi}{8}$ in terms of our proposed Ginzburg-Landau like energy. This energy also implies that put no singularity on a flat angle or a 1/4 singularity on a $\pi/2$ angle or a -1/4 singularity on a $3\pi/2$ angle of the external boundary will result in 0 singular energy (see Figure \ref{fig:angleCrosses}).\\
	
	On the other hand, the singular energy induced by all the inner singularity is given by
	\[
	\frac{1}{2\pi}\left( \sum\limits_{i=1}^n \left(2\pi| l_i |\right)^2 \right).
	\]

	since this is the total rotational energy, computed as above, created in the domain.
	This way, placing an extra 1/4 singularity in the domain will cost: $\frac{1}{2\pi} \left(2\pi \frac{1}{4}\right)^2 = \frac{\pi}{8}$ in terms of our proposed Ginzburg-Landau like energy.\\
	
	The total singular energy is then given by
	\[
	\sum\limits_{i=1}^m \frac{(\pi - \alpha_i - k_i 2\pi)^2}{2 \alpha_i} + \frac{1}{2\pi}\left( \sum\limits_{i=1}^n \left(2\pi| l_i |\right)^2 \right).
	\]
	
	Note that if we have $m$ boundary singularities of index $k_i$ and $n$ internal singularities of degrees $l_j$, the Poincaré-Hopf theorem implies that
	\[
	\sum_{i=1}^m k_i + \sum_{j=1}^n l_j = \chi
	\]
	where $\chi$ is the Euler-Poincaré characteristic of the computational domain.\\
	
	\subsubsection*{Optimality conditions}
	
	We first consider the case of balanced boundary singularities: $\sum_{i=1}^m k_i = \chi$. It is the case for example if we have only four boundary angles of $90^\circ$ that we associate with four 1/4 boundary singularities with $\chi = 1$.
	
	\begin{theo}[Balanced boundary singularities]\label{theo:balanced}
		If we have an optimal choice of boundary singularities in a case where the sum of the index of the boundary equals the Euler-Poincaré characteristic of the computational domain such as the Poincaré-Hopf theorem is fulfilled, the limits angles $\alpha$ between which a boundary singularity of degree $k$ mst be placed are given by the following relationship:
		\begin{equation}
			\frac{ 1 - \frac{1}{4}}{2} - \left(1 + \frac{1}{8}\right)\frac{\alpha}{2\pi} \leq  k \leq \frac{ 1 + \frac{1}{4}}{2}  - \left(1 - \frac{1}{8} \right)\frac{\alpha}{2\pi}.
		\end{equation}
	\end{theo}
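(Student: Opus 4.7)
In the balanced regime $\sum_i k_i = \chi$, Poincaré--Hopf forces $\sum_j l_j = 0$, and since interior singularities carry strictly positive cost, an energy-minimizing configuration contains none. The objective therefore reduces to the boundary sum $E=\sum_i (\pi-\alpha_i-2\pi k_i)^2/(2\alpha_i)$, and the plan is to characterize local optimality at a single corner by perturbation: $k_i$ is optimal at angle $\alpha_i$ exactly when neither $k_i\to k_i+1/4$ nor $k_i\to k_i-1/4$ can decrease the total energy.

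\textbf{Compensation.} Any $\pm 1/4$ shift at corner $i$ breaks the constraint $\sum k_j = \chi$ and so must be paid for, either by an opposite shift at some other corner or by inserting an interior singularity of index $\mp 1/4$. The crucial quantitative input is the \emph{amortized} cost of one such compensation: I would argue it equals $\pi/16$ per $\pm 1/4$ perturbation. Two equivalent justifications are available; either (i) interpret the rotational interior energy kinetically as $(2\pi|l|)^2/(2\cdot 2\pi) = \pi l^2$ (which parallels the factor $1/(2\alpha_i)$ appearing on the boundary arc of length $\alpha_i$ and yields $\pi/16$ for $|l|=1/4$), or (ii) observe that the cheapest restoration of Poincaré--Hopf is a symmetric swap with a second corner sitting at the opposite end of its own admissible range, so that each of the two corners pays exactly half the rotational cost.

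\textbf{Algebra.} With $y_i := \pi-\alpha_i-2\pi k_i$, the boundary-energy increment under $k_i\to k_i\pm 1/4$ is $(\mp\pi y_i + \pi^2/4)/(2\alpha_i)$. Optimality requires
\[
\frac{-\pi y_i+\pi^2/4}{2\alpha_i}+\frac{\pi}{16}\geq 0 \quad\text{and}\quad \frac{+\pi y_i+\pi^2/4}{2\alpha_i}+\frac{\pi}{16}\geq 0,
\]
which simplify to $-\pi/4-\alpha_i/8 \leq y_i \leq \pi/4+\alpha_i/8$. Substituting $y_i=\pi-\alpha_i-2\pi k_i$ and isolating $k_i$ converts these two inequalities into
\[
\tfrac{3}{8}-\tfrac{9}{8}\tfrac{\alpha_i}{2\pi}\;\leq\; k_i \;\leq\; \tfrac{5}{8}-\tfrac{7}{8}\tfrac{\alpha_i}{2\pi},
\]
which is the claimed range after rewriting $3/8=(1-1/4)/2$, $5/8=(1+1/4)/2$, $9/8=1+1/8$, $7/8=1-1/8$.

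\textbf{Main obstacle.} The algebra above is mechanical; the delicate step is justifying the $\pi/16$ amortized compensation value, since using the naive single-insertion cost $\pi/8$ would widen the coefficients to $5/4$ and $3/4$ (matching independent per-corner rounding) rather than the tighter $9/8$ and $7/8$ announced in the theorem. I therefore expect most of the proof's work to sit in that accounting: either proving that the energy-minimizing restoration of the constraint after a $\pm 1/4$ perturbation is genuinely a symmetric two-corner swap in the balanced regime, or fixing once and for all the kinetic-type normalization of the interior singularity term so that the factor $1/(4\pi)$ rather than $1/(2\pi)$ is the one consistent with the boundary rotational energy. Once that value is locked in, the bounds follow by straightforward rearrangement.
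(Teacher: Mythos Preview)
Your approach is essentially the paper's: perturb $k_f \to k_f \pm \tfrac14$, add one compensating interior singularity, require the perturbed total energy not to drop, and unwind the resulting inequality for $k_f$. The paper carries this out in units where the boundary term reads $\bigl(\tfrac12-\tfrac{\alpha_f}{2\pi}-k_f\bigr)^2\big/\tfrac{\alpha_f}{2\pi}$ and simply takes the compensation cost to be $(\tfrac14)^2$---which is exactly your $\pi/16$ after undoing the normalization---so the point you flag as the ``main obstacle'' is not argued further in the paper but is used directly, and from there the algebra matches yours line for line.
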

	\begin{proof}
		See the appendix section \ref{demo:balanced}.
	\end{proof}

	In particular, we can decide to place no singularity ($k_f = 0$) when
	\[
	\frac{2\pi}{3}\leq \alpha_f \leq \frac{10\pi}{7}
	\]
	but the intervals overlap so we can decide to place a 1/4 singularity ($k_f = 1/4$) when
	\[
	\frac{2\pi}{9}\leq \alpha_f \leq \frac{6\pi}{7}
	\]
	thus in the interval $2\pi/3 \leq x \leq 6\pi/7$ we can place either a 1/4 or no singularity. Note in particular that the heuristic classical limit angle of $3\pi/4$ belongs to that interval.\\
	
	For the two other cases, there is no overlaping between the intervals on which place a singularity.\\

	We now consider the case of rare boundary singularities: $\sum_{i=1}^m k_i < \chi$. It is the case for example if we have three boundary angles of $90^\circ$ that we associate with three 1/4 boundary singularities with $\chi = 1$. 
	\begin{theo}[Scarce boundary singularities]\label{theo:rare}
		If we have an optimal choice of boundary singularities in a case where the sum of the index of the boundary singularities is lower than the Euler-Poincaré characteristic of the computational domain, the limits angles $\alpha$ between which a boundary singularity of degree $k$ must be placed are given by the following relationship:
		\begin{equation}
		\frac{ 1 - \frac{1}{4}}{2} - \left(1 - \frac{1}{8}\right)\frac{\alpha_{f}}{2\pi} \leq  k_f \leq \frac{ 1 + \frac{1}{4}}{2}  - \left(1 - \frac{1}{8} \right)\frac{\alpha_{f}}{2\pi}.
		\end{equation}
	\end{theo}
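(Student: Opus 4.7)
The plan is to verify local optimality of the proposed boundary index $k_f$ at angle $\alpha_f$ by comparing the total singular energy $E$ against every configuration that differs only by the replacement $k_f \to k_f \pm 1/4$ and whose internal singularities are adjusted so the Poincaré-Hopf identity $\sum_i k_i + \sum_j l_j = \chi$ still holds. Since the degrees are quantized as multiples of $1/4$, optimality reduces to a pair of discrete inequalities $\Delta E \ge 0$, one for each admissible one-step perturbation; the argument parallels the proof of Theorem~\ref{theo:balanced} but differs in how the perturbation is compensated because we now have an existing stockpile of internal singularities to draw from.

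First I would expand the boundary term. Setting $\beta_f = \pi - \alpha_f - 2\pi k_f$ and writing $k_f^\star = (\pi-\alpha_f)/(2\pi) = 1/2 - \alpha_f/(2\pi)$ for the unconstrained minimizer of $E_b(\,\cdot\,,\alpha_f)$, a direct expansion of $(\beta_f \mp \pi/2)^2 - \beta_f^2$ divided by $2\alpha_f$ gives
\[
\Delta E_b(\pm 1/4) \;=\; \frac{\pi^2}{\alpha_f}\bigl[\tfrac{1}{8} \pm (k_f - k_f^\star)\bigr].
\]
Next I would identify the internal-compensation cost in the scarce regime. Because $\sum_i k_i < \chi$ forces $\sum_j l_j > 0$, the optimal internal configuration (minimizing $\sum l_j^2$ subject to a fixed positive sum with each $l_j$ a positive multiple of $1/4$) consists solely of $+1/4$ singularities, each contributing a fixed marginal cost $C_{\mathrm{int}}$ to the internal energy, with $C_{\mathrm{int}}\alpha_f/\pi^2 = \alpha_f/(16\pi)$. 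The crucial point is the sign asymmetry: increasing $k_f$ by $1/4$ permits \emph{removing} one existing $+1/4$ internal (saving $C_{\mathrm{int}}$), whereas decreasing $k_f$ by $1/4$ forces \emph{adding} one (costing $C_{\mathrm{int}}$). Discrete optimality therefore reads
\[
\Delta E_b(+1/4) \;\ge\; C_{\mathrm{int}}, \qquad \Delta E_b(-1/4) \;\ge\; -C_{\mathrm{int}}.
\]

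Solving these two inequalities yields $k_f \ge k_f^\star - \tfrac{1}{8} + \alpha_f/(16\pi)$ and $k_f \le k_f^\star + \tfrac{1}{8} + \alpha_f/(16\pi)$; substituting $k_f^\star = 1/2 - 8\alpha_f/(16\pi)$ collapses these to the claimed double inequality, with the common shift $7\alpha_f/(16\pi) = (1 - \tfrac{1}{8})\,\alpha_f/(2\pi)$. The main obstacle I anticipate is correctly diagnosing the compensation mechanism: unlike the balanced case, where both $\pm 1/4$ perturbations must \emph{add} an internal singularity (since none existed at the optimum) so that both inequalities carry $+C_{\mathrm{int}}$, in the scarce regime the two perturbations interact with the same reservoir of $+1/4$ internals, flipping the sign of $C_{\mathrm{int}}$ on the first inequality and thereby shifting the entire admissible interval upward by $\alpha_f/(16\pi)$ relative to $k_f^\star$. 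A secondary subtlety is that one-step optimality suffices: the quadratic growth of $E_b$ in $k_f$ against the merely linear growth of $n\,C_{\mathrm{int}}$ ensures that if $\pm 1/4$ perturbations fail to decrease $E$, neither do $\pm n/4$ perturbations for $n \ge 2$, so no further cases need to be checked.
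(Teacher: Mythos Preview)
Your proposal is correct and follows essentially the same route as the paper: both argue by comparing the optimal configuration against the one-step perturbations $k_f \to k_f \pm \tfrac14$ compensated by adjusting the pool of internal $+\tfrac14$ singularities, then reduce the resulting pair of discrete inequalities to the stated double bound. Your write-up is in fact tighter than the paper's on one point: you set up the optimality inequalities with the correct orientation $\Delta E_b(\pm\tfrac14)\ge \pm C_{\mathrm{int}}$ from the outset, whereas the paper's appendix starts with the reversed sign and only recovers the correct interval after a non-sequitur step (passing from ``$\tfrac18\le \pm X$'' to ``$-\tfrac18\le X\le \tfrac18$'').
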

	\begin{proof}
		See the appendix section \ref{demo:rare}.
	\end{proof}
	In particular, we can decide to place no singularity ($k_f = 0$) when
	\[
	\frac{6\pi}{7}\leq \alpha_f \leq \frac{10\pi}{7}
	\]
	and we decide to place a 1/4 singularity ($k_f = 1/4$) when
	\[
	\frac{2\pi}{7}\leq \alpha_f \leq \frac{6\pi}{7}.
	\]
	Note in particular that the heuristic classical limit angle of $3\pi/4$ belongs to that interval.
	This time the intervals follow each other without overlapping:
	\[
		0 \overset{k=\frac{2}{4}}{\longleftrightarrow} \frac{2\pi}{7} \overset{k=\frac{1}{4}}{\longleftrightarrow} \frac{6\pi}{7} \overset{k=0}{\longleftrightarrow} \frac{10\pi}{7} \overset{k=-\frac{1}{4}}{\longleftrightarrow} 2\pi.
	\]
	We see that we place more easily positive singularity than negative ones.\\
	
	We now consider the case of plentiful boundary singularities: $\sum_{i=1}^m k_i > \chi$. It is the case for example if we have five boundary angles of $90^\circ$ that we associate with five 1/4 boundary singularities with $\chi = 1$.
	
	\begin{theo}[Excess of boundary singularities]\label{theo:excess}
		If we have an optimal choice of boundary singularities in a case where the sum of the index of the boundary singularities is greater than the Euler-Poincaré characteristic of the computational domain, the limits angles $\alpha$ between which a boundary singularity of degree $k$ must be placed are given by the following relationship:
		\begin{equation}
		\frac{ 1 - \frac{1}{4}}{2} - \left(1 + \frac{1}{8}\right)\frac{\alpha_{f}}{2\pi} \leq  k_f \leq \frac{ 1 + \frac{1}{4}}{2}  - \left(1 + \frac{1}{8} \right)\frac{\alpha_{f}}{2\pi}.
		\end{equation}
	\end{theo}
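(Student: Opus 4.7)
The proof follows the same template as Theorem~\ref{theo:rare} for the scarce case, with the sign of the interior contribution reversed to reflect the excess condition.

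First, the Poincar\'e--Hopf identity $\sum_i k_i + \sum_j l_j = \chi$ together with the hypothesis $\sum_i k_i > \chi$ forces the total interior index $L = \chi - \sum_i k_i$ to be strictly negative. Since the interior energy $\frac{1}{2\pi}\sum_j (2\pi l_j)^2$ is a convex sum of squares, its minimum over quarter-integer indices summing to $L$ is attained by $4|L|$ singularities each of degree $-1/4$, giving an optimal interior energy equal to $\frac{\pi}{2}\bigl(\sum_i k_i - \chi\bigr)$. Substituting back leaves an objective that is separable in each boundary index.

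The proof then reduces to imposing the two discrete first-order optimality conditions at the fixed angle $\alpha_f$: $E(k_f+1/4) \geq E(k_f)$ (which, after simplification, yields a lower bound on $k_f$) and $E(k_f-1/4) \geq E(k_f)$ (which yields the upper bound). Because $L<0$, the perturbation $k_f \to k_f + 1/4$ enlarges $|L|$ and therefore adds $\pi/8$ to the interior energy, while $k_f \to k_f - 1/4$ shrinks $|L|$ and saves $\pi/8$. This is the key sign flip distinguishing this regime from the scarce one, where the same perturbations had the opposite effect on $|L|$.

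Expanding the boundary-energy difference $(\pi - \alpha_f - 2\pi k_f \mp \pi/2)^2 - (\pi - \alpha_f - 2\pi k_f)^2$ and combining with the $\pm \pi/8$ interior corrections produces two linear inequalities in $k_f$, whose rearrangement gives the stated bounds. The main difficulty is not conceptual but lies in the bookkeeping: one must verify that both inequalities, after clearing the denominator $2\alpha_f$, generate the same coefficient $(1+1/8)$ on the $\alpha_f/(2\pi)$ term, in contrast with the scarce case where both carry $(1-1/8)$ and the balanced case where the two bounds carry the opposing coefficients $(1\pm1/8)$. This uniformity is the signature of the excess regime: regardless of which side of $k_f$ one perturbs, the interior compensation pushes in the same direction (namely, toward reducing $k_f$), so the $\pi/8$ correction enters the two inequalities with correlated signs and the resulting $\alpha_f$-coefficient is shifted in the same way in both. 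Since the structural ingredients --- optimal interior configuration, separability of the reduced objective, and discrete first-order conditions --- are already established in the proofs of Theorems~\ref{theo:balanced} and~\ref{theo:rare}, no new analytical obstacle is expected beyond this careful tracking of signs.
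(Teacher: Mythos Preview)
Your proposal is correct and follows essentially the same approach as the paper: both impose discrete first-order optimality conditions by perturbing $k_f$ by $\pm 1/4$, track the compensating $\pi/8$ change in interior energy (with the sign dictated by the excess hypothesis $L<0$), and reduce to two linear inequalities in $k_f$ whose rearrangement yields the stated bounds. Your preliminary step justifying the optimal interior configuration via convexity is more explicit than the paper's treatment, but the structural argument and the algebraic route are the same.
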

	\begin{proof}
		See the appendix section \ref{demo:excess}.
	\end{proof}
	
	In particular, we can decide to place no singularity ($k_f = 0$) when
	\[
	\frac{2\pi}{3}\leq \alpha_f \leq \frac{10\pi}{9}
	\]
	and we decide to place a 1/4 singularity ($k_f = 1/4$) when
	\[
	\frac{2\pi}{9}\leq \alpha_f \leq \frac{2\pi}{3}.
	\]
	Note in particular that the heuristic classical limit angle of $3\pi/4$ belongs to that interval.
	This time the intervals follow each other without overlapping:
	\[
	0 \overset{k=\frac{2}{4}}{\longleftrightarrow} \frac{2\pi}{9} \overset{k=\frac{1}{4}}{\longleftrightarrow} \frac{2\pi}{3} \overset{k=0}{\longleftrightarrow} \frac{10\pi}{9} \overset{k=-\frac{1}{4}}{\longleftrightarrow}\frac{14\pi}{9} \overset{k=-\frac{2}{4}}{\longleftrightarrow} 2\pi.
	\]
	We see that we place more easily negative singularities than positive ones.\\
	
%	In terms of implementation, on one hand, if we expect to have too few boundary singularities so that the sum of their indices is smaller than the Euler-Poincaré characteristic of the domain but that when we do the computation, we have too many of them, changes the choices of ranges will always help to get closer to the Euler-Poincaré characteristic. For example, if we have three angles of $\pi/4$ with $\chi = 1$, and that we suppose wrongly that we will have too few boundary singularities, we will associates them with three $2/4$ singularities and have $|\chi - \sum k_i|= 1/2$. When we change the ranges, they become three $1/4$ and we have $|\chi - \sum k_i|= 1/4$.\\
%	
%	On the other hand, if we expect to have too many boundary singularities so that the sum of their indices is greater than the Euler-Poincaré characteristic of the domain but that when we do the computation, we have not enough of them, changes the choices of ranges will always help to get closer to the Euler-Poincaré characteristic. For example, if we have one angle of $\pi/4$ with $\chi = 1$, and that we suppose wrongly that we will have too many boundary singularities, we will associates it with one $1/4$ singularity and have $|\chi - \sum k_i|= 3/4$. When we change the ranges, the singularity become one $1/2$ and we have $|\chi - \sum k_i|= 1/2$.\\
	
	The conclusion of our calculations is that when we have enough boundary singularities, the user can choose the type of the singularities on some ranges; when we have not enough boundary singularities, the placement of positive singularities occurs more often and when we have more than enough boundary singularities, the placement of negative singularities occurs more often. With the implementation, if we made a wrong supposition, choose the another set of ranges will always be a better choice or an equivalent one if we are in the situation where $\sum k_i = \chi$.
%	\comment{Plutôt que deviner le nombre de singularités, on essaie de trouver les singularités dans les trois scénarios (singularités rares, équilibrées et abondantes) et on voit lequel fonctionne. En pratique, il n’y en aura en général qu’un.}	
%	
	% For figure citations, please use "Fig" instead of "Figure".

	% Place figure captions after the first paragraph in which they are cited.

	% Results and Discussion can be combined.
	\section{Results and Discussion}
	
	We will show and compare different results, as well as seeing that it is sometimes necessary to add constraints to our basic formulation to obtain the desired results. We will also give a new interpretation of the Poincaré-Hopf theorem that is compatible with the optimizations proposed in the section \ref{GLPiecewise}. This interpretation helps a user to properly apply our method without necessarily knowing explicitly the configuration of the boundary singularities.
	
	\subsection{Analysis of the numerical results} \label{resultsAnalysis}
	
	%All this is encoded in Python using the GMSH python API which can generate the Jacobians and shape functions of the initial triangular mesh.
	
	In figure \ref{fig:firstFormulationDisk}, we can see that our formulation in $\mathbb{R}^2$ does not result in a vector field in $S^1$ in the computed
	cross field and in the corresponding vector field for which our algorithm has converged in 13 fast iterations. Furthermore, we can easily move the singularities (see figure \ref{fig:diskV4}).
	
	\begin{figure}[ht] 
		\begin{minipage}[b]{.5\linewidth}
			\centering
			\includegraphics[width=.8\linewidth]{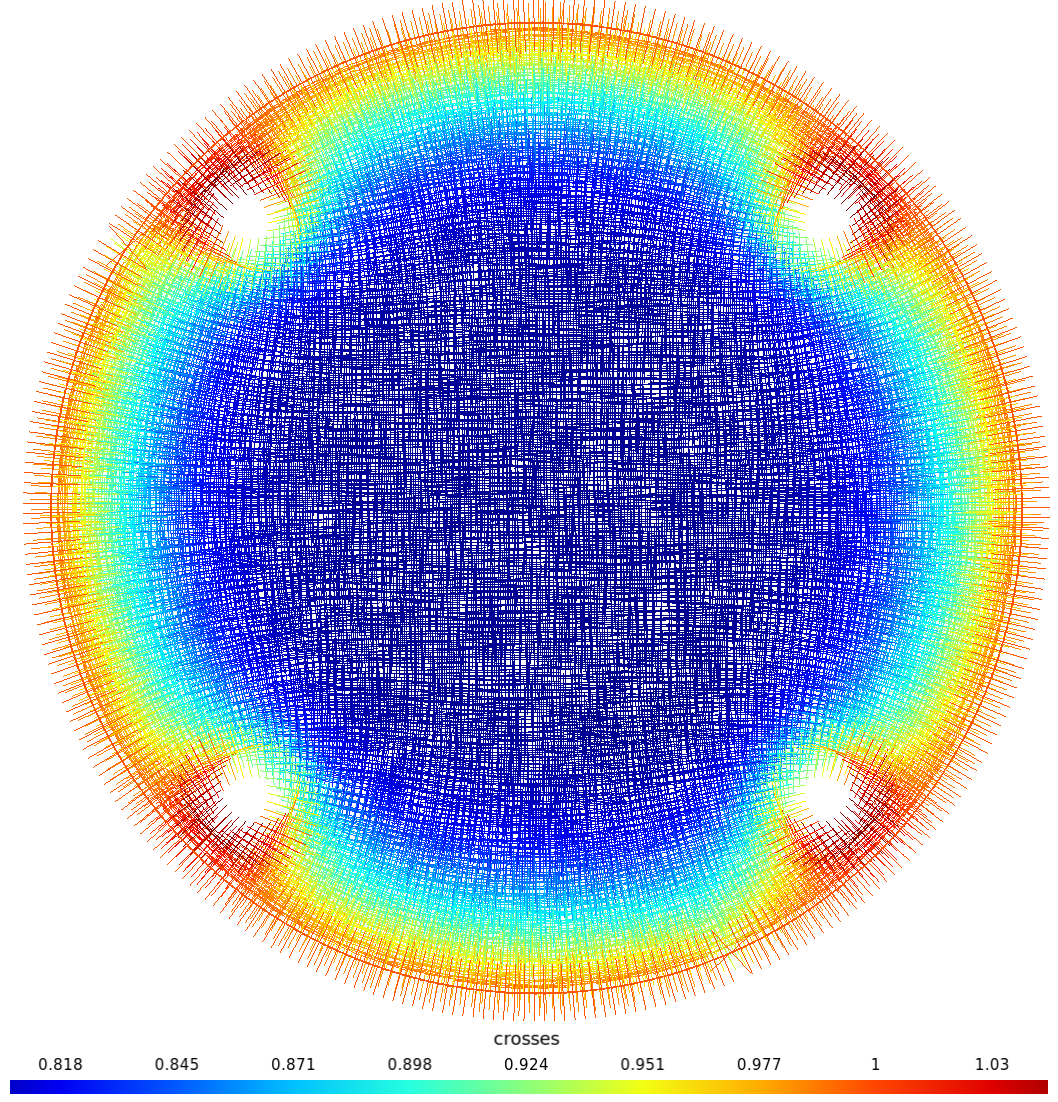}
		\end{minipage}%%
		\begin{minipage}[b]{.5\linewidth}
			\centering
			\includegraphics[width=.8\linewidth]{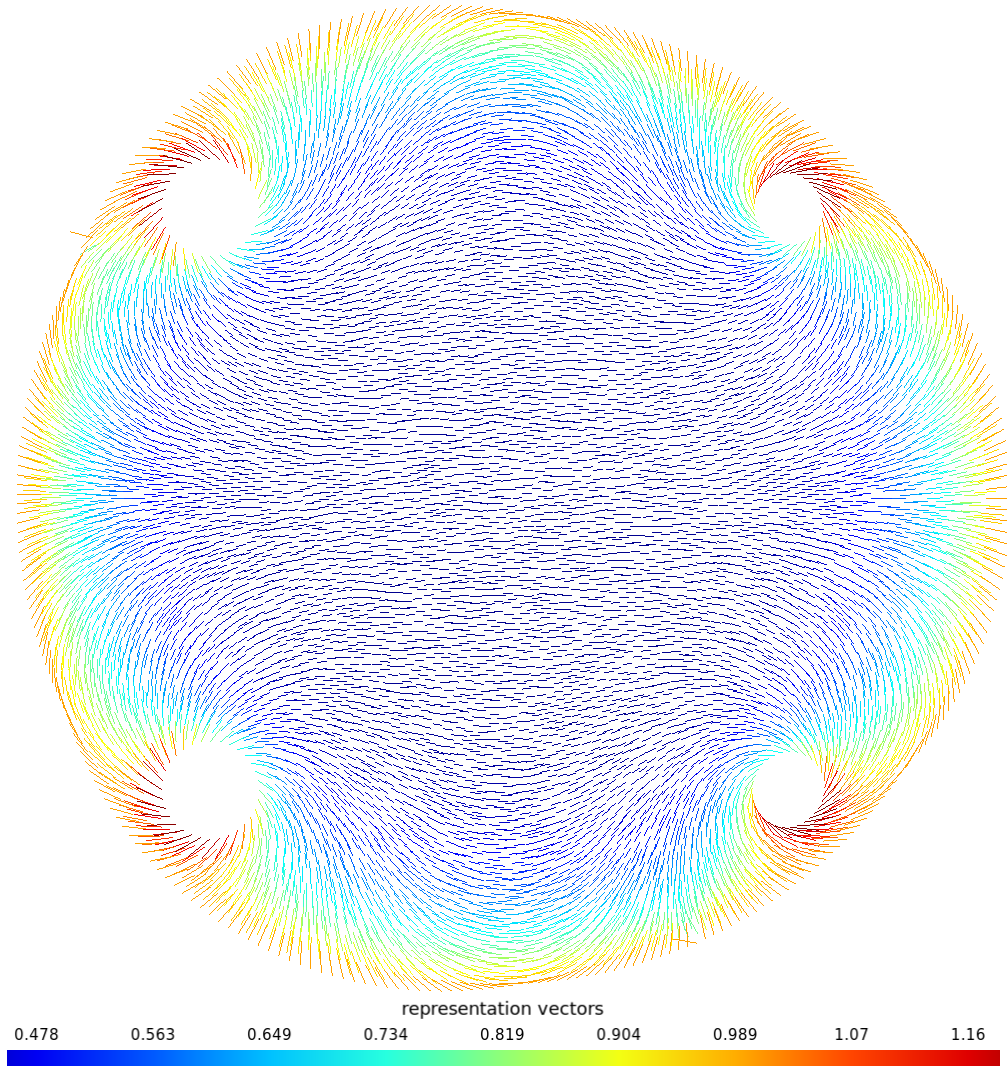} 
			%\vspace{4ex}
		\end{minipage}%% 
		\caption{Cross field for a disk of radius 1 with four holes of radius $0.1$ with centers on a circle of radius $0.6$ at left and the corresponding vector field at right.}
		\label{fig:firstFormulationDisk} 
	\end{figure}
	
	\begin{figure}[ht] 
		\begin{minipage}[b]{.5\linewidth}
			\centering
			\includegraphics[width=.8\linewidth]{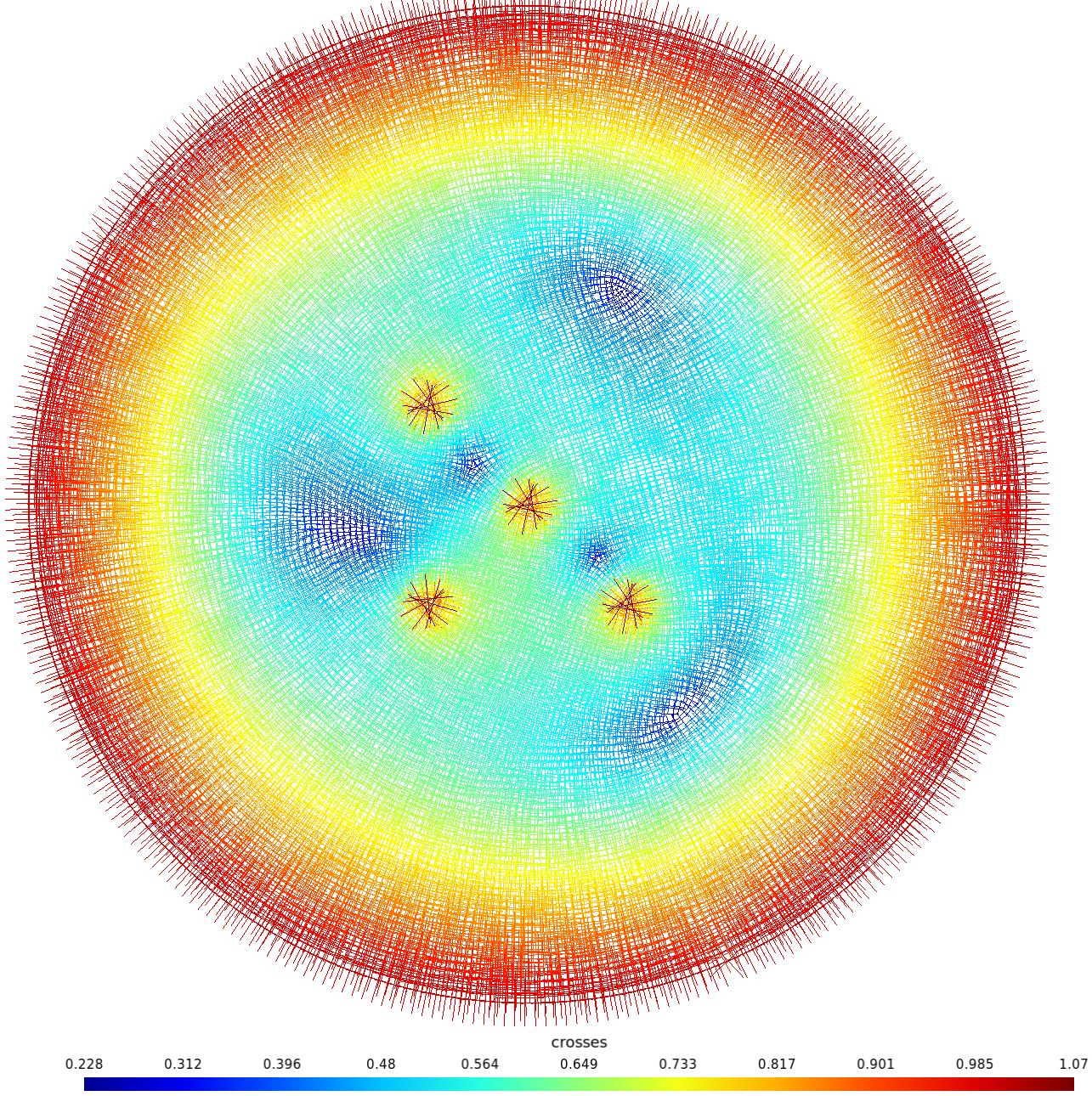} 
			%\vspace{4ex}
		\end{minipage}%%
		\begin{minipage}[b]{.5\linewidth}
			\centering
			\includegraphics[width=.8\linewidth]{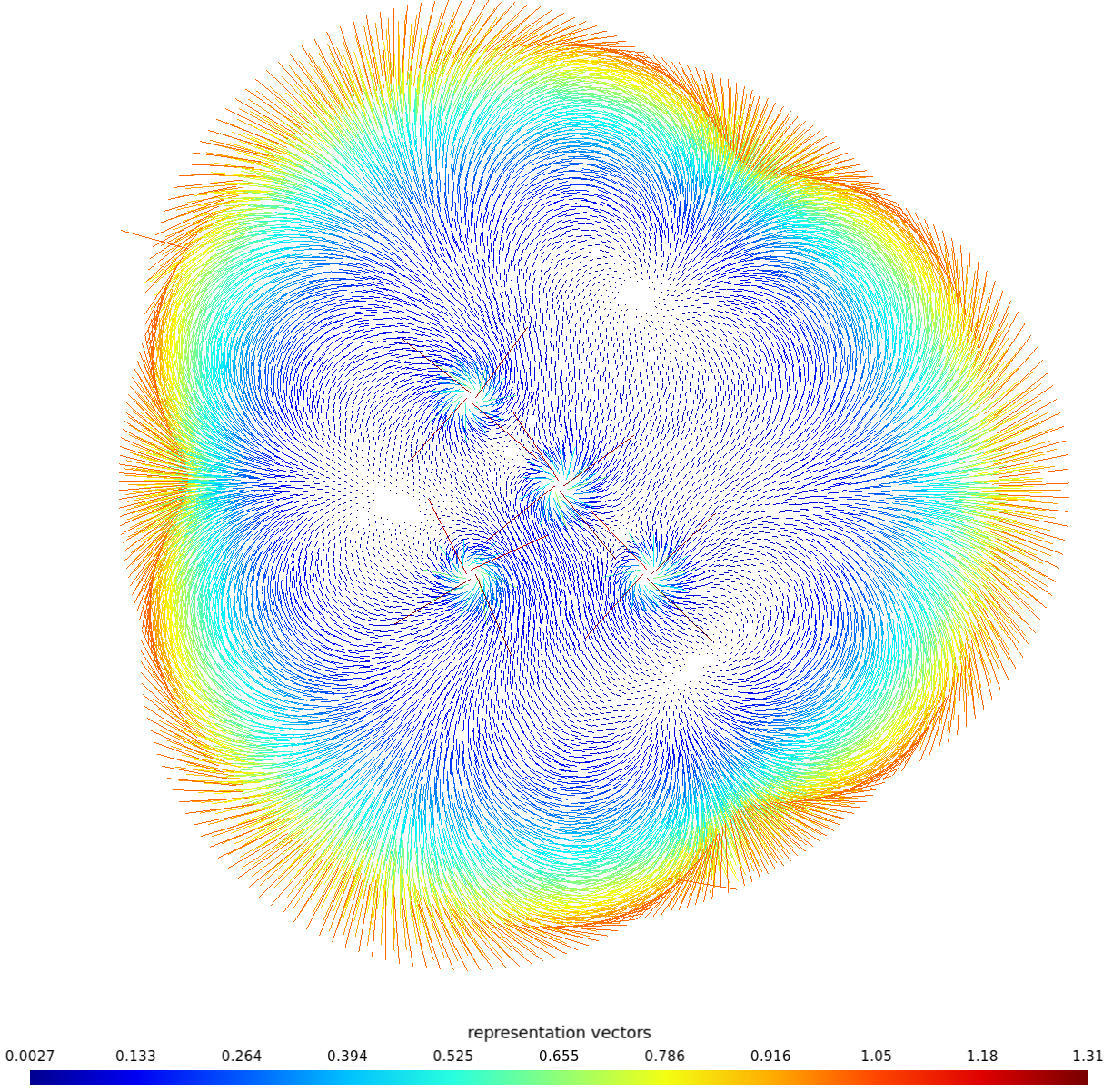} 
			%\vspace{4ex}
		\end{minipage}%% 
		\caption{Singularities placed via disks of radius equals to $0.01$ placed on $(0, 0)$, $(-0.2, 0.2)$, $(0.2, -0.2)$ and $(-0.2, -0.2)$.}
		\label{fig:diskV4} 
	\end{figure}

	The method converges quickly and in never more than 30 iterations in the tested cases. Furthermore, the Newton-Raphson scheme could be replaced with methods better adapted to solve linear objective with quadratic constraints. Even with the Newton-Raphson scheme the method can converge for any topologically consistent configuration of the hole centers and sizes. On one hand, larger holes will imply less rotations and thus less energy in the computed cross fields but also less crosses in the computed field which will thus imply quicker convergence as long as the configuration of singularity is topologically coherent. On the other hand, large holes could imply more norm disparity of the crosses on their outline and thus a worse estimation of the degree on it.
	Furthermore, it is more difficult to extract a block-structured quandrangular mesh from a domain perforated with large holes.
	
	%\begin{figure}[ht] 
	%	\begin{minipage}[b]{.5\linewidth}
	%		\centering
	%		\includegraphics[width=\linewidth]{diskV3Crosses.png} 
	%		%\vspace{4ex}
	%	\end{minipage}%%
	%	\begin{minipage}[b]{.5\linewidth}
	%		\centering
	%		\includegraphics[width=\linewidth]{diskV3Vectors.png} 
	%		%\vspace{4ex}
	%	\end{minipage}%% 
	%	\caption{Singularities placed via disks of radius equals to $0.01$ placed on $(0.3, 0.3)$, $(-0.2, 0.2)$, $(0.3, -0.3)$, $(0.3, 0.3)$.}
	%	\label{fig:diskV3} 
	%\end{figure}
	Among the results obtained, two are particularly worth mentioning.
	
	First, for methods based on classical Ginzburg-Landau theories, the generation of cross fields in a flat ellipse leads to singularities aligned on a single line while the perforation method allows to keep a rectangular configuration of singularities similar to that of the disc (see figure \ref{fig:ellipse}).
	
	\begin{figure}[ht] 
		\begin{minipage}[b]{\linewidth}
			\centering
			\includegraphics[width=.8\linewidth]{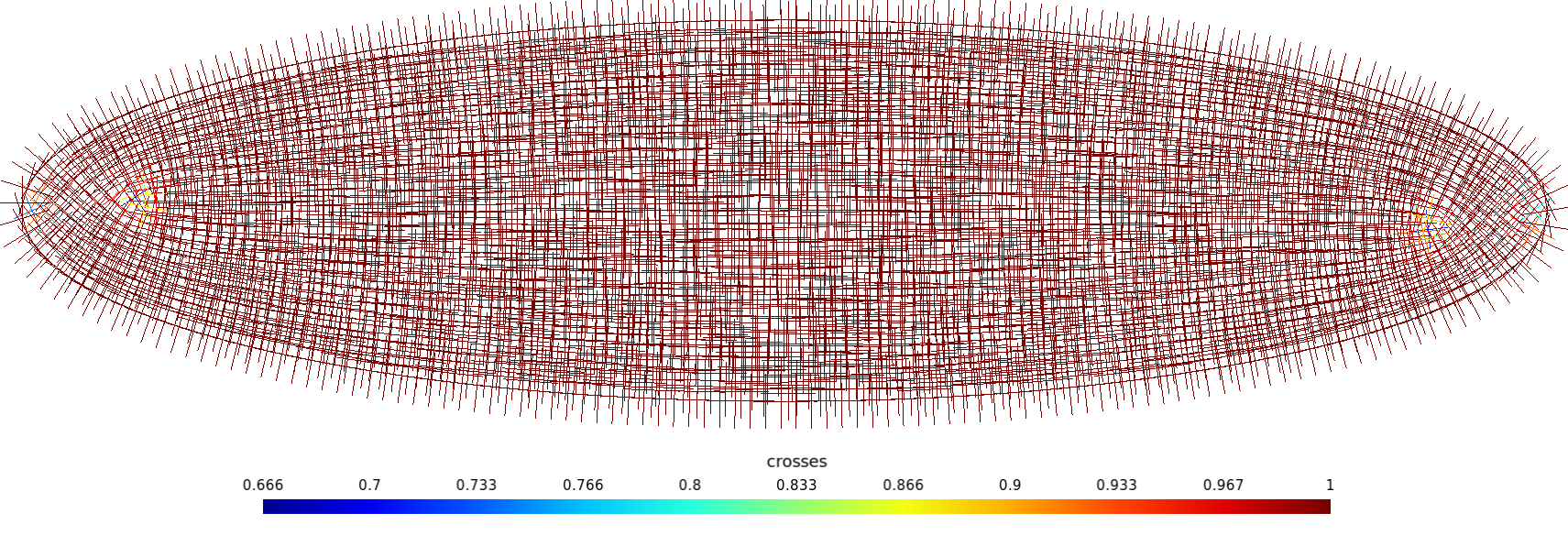}
			\vspace{.3cm}
		\end{minipage}
		\begin{minipage}[b]{\linewidth}
			\centering
			\includegraphics[width=.8\linewidth]{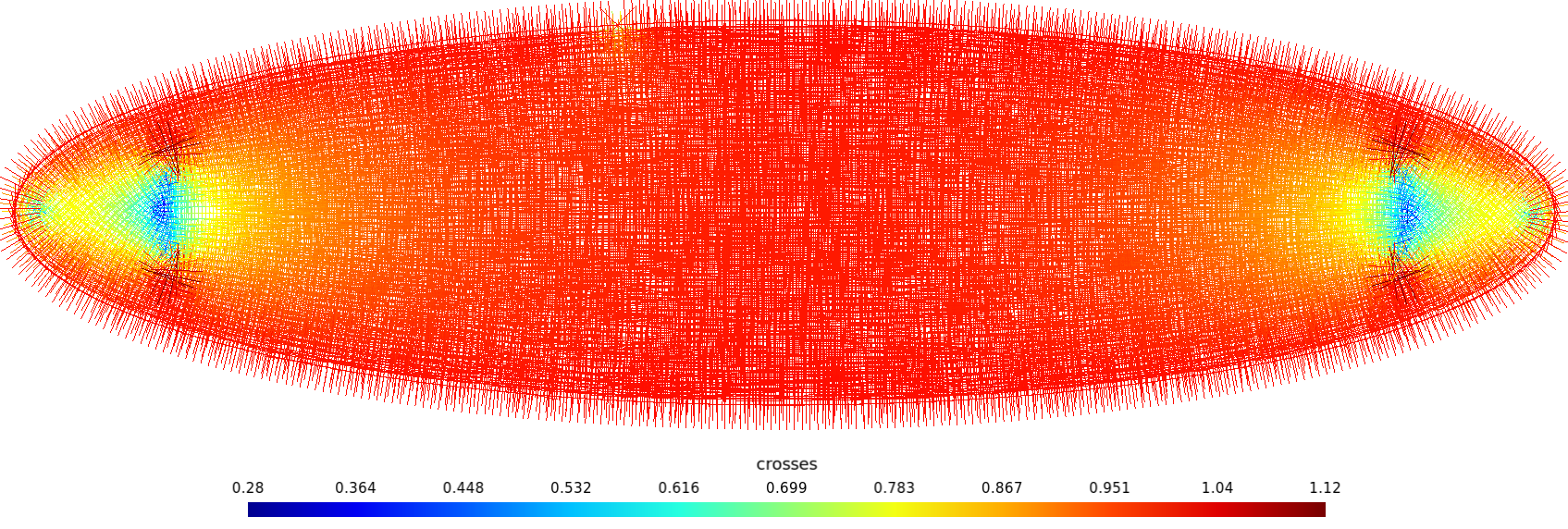} 
		\end{minipage}%% 
		\caption{Ginzburg-Landau based singularity configuration (above) compared with a perforated singularity configuration (below).}
		\label{fig:ellipse} 
	\end{figure}
	
	Second, we noticed that for a choice of singularities coherent and close to the configuration prescribed by the Ginzburg-Landau functional \eqref{eq:GL}, the condition 
	on a hole containing a
	singularity can be imposed on only one triangle of the starting triangular mesh (see figure \ref{fig:oneTriangle}). If one departs too far from the Ginzburg-Landau configuration, disparities of norms between the crosses of the triangle may imply an imposition of unsatisfactory degree.
	
	\begin{figure}[ht] 
		\begin{minipage}[b]{.5\linewidth}
			\centering
			\includegraphics[width=.8\linewidth]{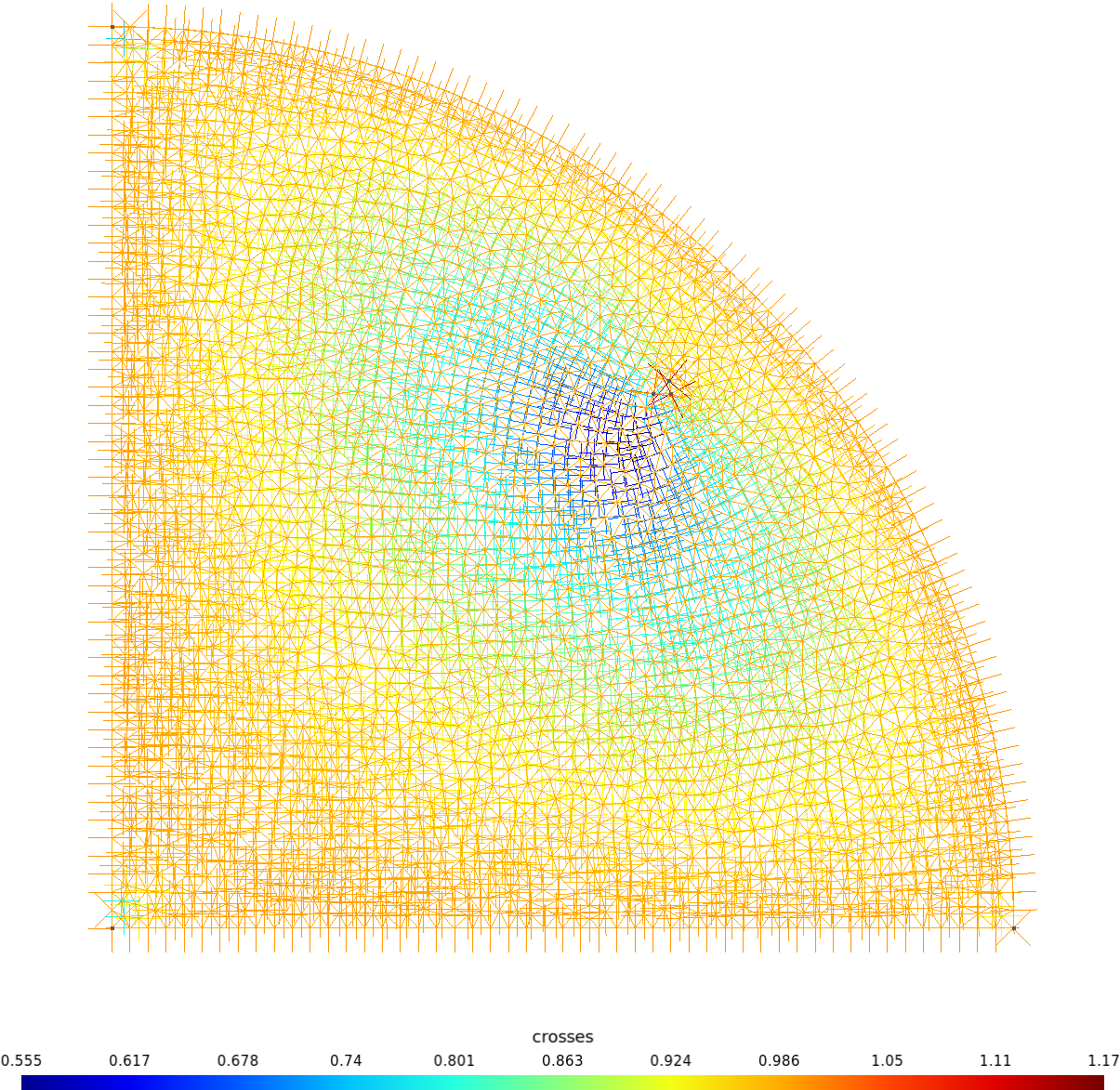} 
		\end{minipage}%%
		\begin{minipage}[b]{.5\linewidth}
			\centering
			\includegraphics[width=.8\linewidth]{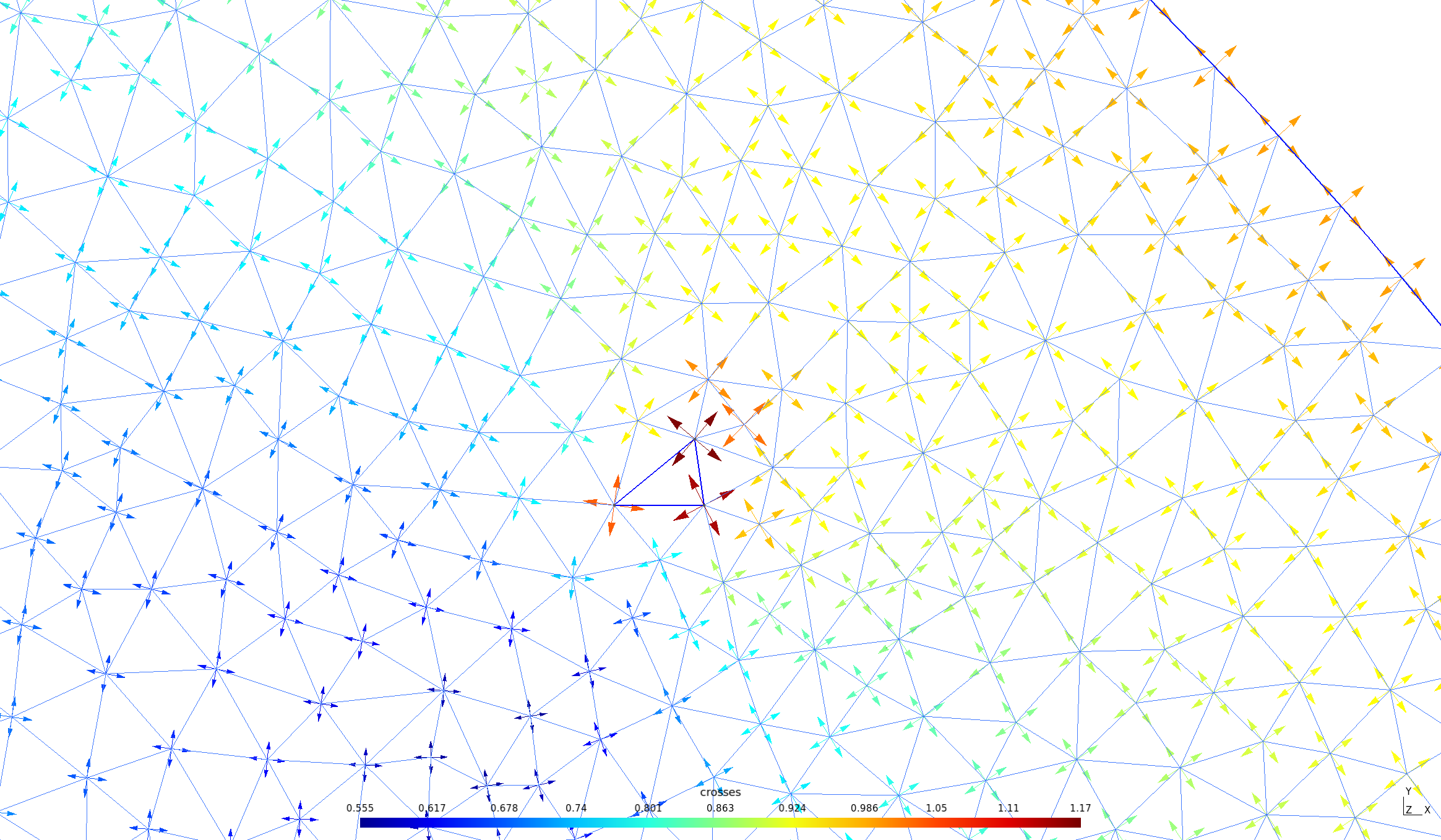}
			\vspace{.3cm}
		\end{minipage}%% 
		\caption{Generation of a cross field in which a singularity is placed on a single triangle of the underlying mesh.}
		\label{fig:oneTriangle} 
	\end{figure}
	
	%Unfortunately, the effectiveness of this method is linked to its non-generality, which implies at least two flaws.
	
	When we want to place a non-minimal number of singularities, we may have to force the crosses on some holes to have a norm equal to 1 via additional Lagrange multipliers (see the blue circle of the figure \ref{fig:normHole}) to make the solution converges towards the prescribed degrees. We exceptionally decided to work with the domain $G$ (without perforated holes as singularities) rather than $G_\rho$ in order to better visualize the different types of singularities on figure \ref{fig:normHole}. We also see that a large blue circle implies more flexibility for the actual position of the placed singularity within the hole. Note that we have here singularities on the corners boundary $\partial G$ and that the Poincar\'e-Hopf theorem \eqref{eq:PoinHopf} is well respected thanks to those singularities. Being very localized, the added constraints slow down the method only moderately.
	
	\begin{figure}[ht] 
		\centering
		\includegraphics[width=\linewidth]{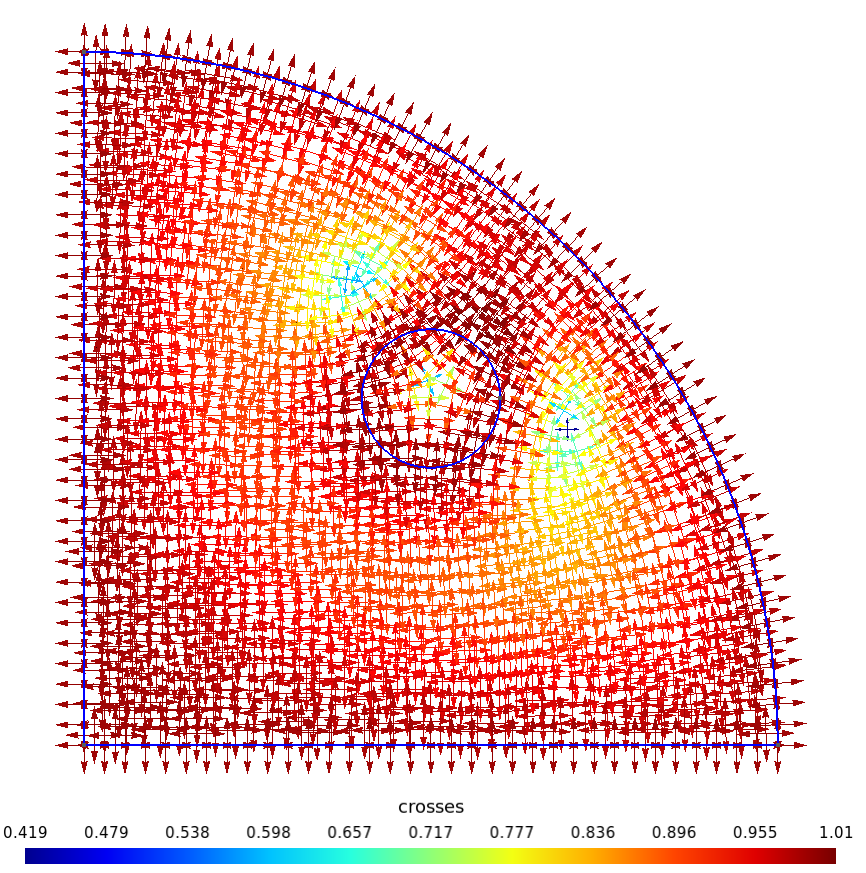} 
		\caption{Forced singularities on a disk quarter. The constraint to obtain a singularity -1/4 has been placed on the blue circle shown above. All crosses are unitary on it. We see two compensatory 1/4 singularities appearing.}
		\label{fig:normHole} 
	\end{figure}
	A flaw of the method is that the more we move from the solution associated with the minimizers of the Ginzburg-Landau functional, the more we have disparities on the norms of our crosses which can make the process of extracting quadrangular meshes from it more difficult (see figure and \ref{fig:diskGL}).	
	%\begin{figure}[ht] 
	%	\centering
	%	\includegraphics[width=.9\linewidth]{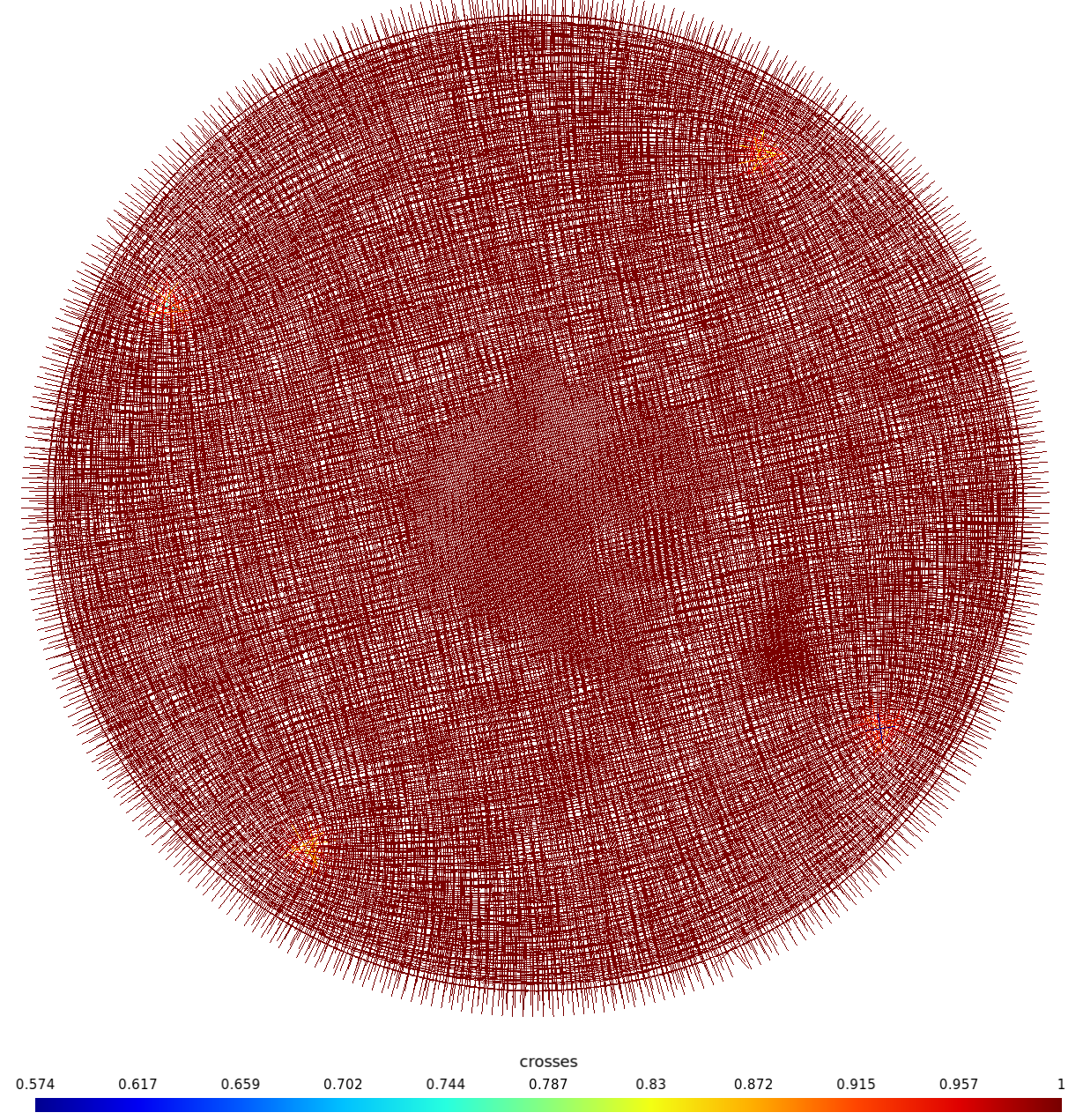} 
	%	\caption{Singularities obtained via the Ginzburg-Landau equation \eqref{eq:GL}. The non-linear part of the equation is added via a linearization of the Ginzburg-Landau functional and a Newton-Raphson scheme.}
	%\label{fig:diskGL} 
	%\end{figure}
	A way to counteract this is to add to our method the non-linear penalty of \eqref{eq:GL} via a linearization and a Newton-Raphson scheme that is performed until convergence. All the computations implying this added terms have been performed with $\epsilon = 1/100$. There are two main drawbacks of this added penalty. First, the modified method is far slower. Second, when we try to place a holed singularity too far from the best location for the Ginzburg-Landau functional, a zero norm singular cross of opposed degree sticks to it and the natural zero norm singular cross of \eqref{eq:GL} appears (see figures \ref{fig:diskV4} and \ref{fig:holeAndGL}).
	
	%\begin{figure}[ht] 
	%	\centering
	%	\includegraphics[width=.9\linewidth]{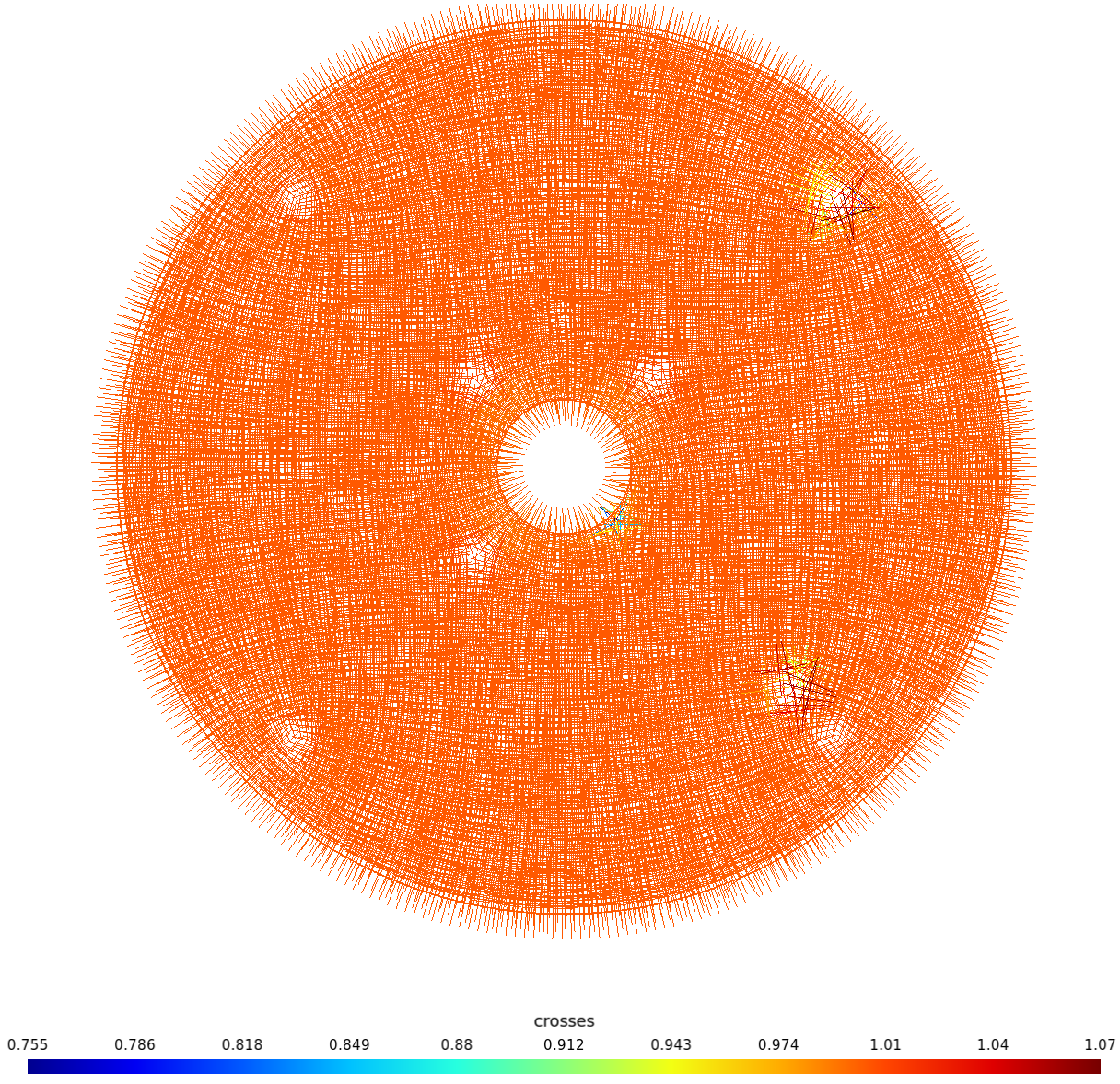} 
	%	\caption{Singularities obtained via the Ginzburg-Landau equation \eqref{eq:GL}. Holes set to obtain singularities of degree 1/4 were placed on $(-0.6, 0.6)$, $(-0.6, -0.6)$ and $(0.6, -0.6)$ and those set for degree -1/4 were placed on  $(-0.2, 0.2)$, $(-0.2, -0.2)$, $(0.5, -0.5)$ and $(0.6, 0.6)$.}
	%	\label{fig:holeAndGL} 
	%\end{figure}
	
	\begin{figure}[ht] 
		\begin{minipage}[b]{.5\linewidth}
			\centering
			\includegraphics[width=.9\linewidth]{diskGL.png}
			\captionsetup{width=.8\textwidth}
			\caption{Singularities obtained via the Ginzburg-Landau equation \eqref{eq:GL} on a disk centered at the origin and of radius 1.}
			\label{fig:diskGL}
			\vspace{.4cm}
		\end{minipage}%%
		\begin{minipage}[b]{.5\linewidth}
			\centering
			\includegraphics[width=\linewidth]{holeAndGL.png}
			\captionsetup{width=\textwidth} 
			\caption{Singularities obtained via the Ginzburg-Landau equation \eqref{eq:GL} with some large visible holes set to obtain singularities of degree 1/4 on a similar disk as the one of figure \ref{fig:diskGL}.}
			\label{fig:holeAndGL}
		\end{minipage}%% 
	\end{figure}
	
	Still there is at least one benefit with this added penalty term. With it, the choices of singularities that do not respect the Poincar\'e-Hopf theorem are permitted without divergence of the method. We can see on the figure \ref{fig:holeAndGL} that a wrong singularity has been placed on a circle centered on $(0.6, 0.6)$ but two opposite singularities appears on the hole to compensate it so as to fulfill the Poincaré-Hopf theorem. 
	We can also see that the wrongly placed singularities on $(0.5, -0.5)$ have been canceled by sticking an opposite singularity on it. Moreover, another singularity was placed on the inner circle. Obviously, such cross fields are no longer regular.
	
	To conclude this section, we propose some comments on the use of the method and its possible extensions.
	On one hand, for a basic use one is invited to set a size for the perforated holes similar to the mesh size of the underlying triangular mesh and with one constraint on each boundary $\gamma_i$ of the perforated holes.
	On the other hand, if you do not have a precise idea of the configuration of the singularities, you can use a single global constraint \eqref{eq:generalConstraint} as well as large circles as the boundaries of the singularities that you may not perforate (as done in the figure \ref{fig:normHole}) in order to more easily extract a quadrangular cut.
	Note also that the choice of a Newton-Raphson scheme is certainly not the most suited and that the alternating direction method of multipliers (ADMM) may be more appropriate given our linear objective with quadratic constraints. A very important application will be to compare energy of different configuration (possibly non-minimal) such as to minimize them (for example with positive and negative singularities).
	It is also important to point out that our method does not allow us to impose boundary singularities or to give information about them. To remedy this, we have extended the Ginzburg-Landau theory in the section \ref{GLPiecewise} and we will give a new interpretation of the Poincaré-Hopf theorem in the section \ref{GenPH}.
	
	\subsection{Generalized Poincar\'e-Hopf theorem}\label{GenPH}
	
	We now give a generalization of the Poincaré-Hopf theorem which is compatible with the previously given optimizations of the boundary singularity degrees. Given a geometry with boundary angles, this generalization allows to quickly know what should be the sum of the degrees of the interior singularities.
	
	The theoretical results obtained in the rich analysis of the energetic Ginzurg-Landau functional do not apply to piecewise smooth domains (see~\cite{BBH}).
	One way to get around this problem is to smooth the corners of a piecewise smooth domain so as to be able to apply the Ginzburg-Landau theory. Unfortunately, clear information about singularity degree for boundary singularities which is seek in the literature (see~\cite{Viertel}) can not be extracted in this way. But in the section \ref{GLPiecewise} we have created a new energy to get over this fact.
	
	On the other hand, multiple generalizations of the Poincar\'e-Hopf theorem have been given in the literature (see~\cite{Beaufort},~\cite{Viertel} and~\cite{Ray2008}). In~\cite{Ray2008} an extension of the Poincar\'e-Hopf theorem to rational indices has first been developed.
	%Crosses are objects invariant by rotation of $k\frac{\pi}{2}$ with $k \in \mathbb{Z}$.
	% Topology is th study of properties that are invariant by continuous deformations (without cutting or gluing anythong), called homotopies. What is even more interesting is the structure of the set of homotopy classes (classes of all objects with the same topology).
	However, these generalizations do not allow to have a priori information on the configuration of singularities to be placed inside angular domains. Indeed, the angles present on the boundary of such domains may or may not correspond to singularities in the resulting mesh.

	In this section, we propose a new formulation of the Poincar\'e-Hopf theorem for N-directional fields to choose topologically coherent singularity configuration to place inside angular domains.
	
	In order to get information on internally placed singularity configurations, we propose a strategy based on a valence term (illustrated on Figure \ref{fig:3pi4}) that sums up the smallest oriented angles such that the object is mapped into an equivalent object on each side of the discontinuity points of the boundary.
	
	\begin{figure}[ht] 
		\centering

		\tikzset{every picture/.style={line width=0.6pt}} %set default line width to 0.75pt        
		
		\begin{tikzpicture}[x=0.75pt,y=0.75pt,yscale=-1,xscale=1]
		%uncomment if require: \path (0,300); %set diagram left start at 0, and has height of 300
		
		%Straight Lines [id:da4319569685214921] 
		\draw    (50.2,89.4) -- (180.2,89.8) ;

		%Straight Lines [id:da9908683688048057] 
		\draw    (180.2,89.8) -- (304.5,27) ;

		%Shape: Arc [id:dp7670093423049111] 
		\draw  [draw opacity=0] (157.19,89.74) .. controls (160.19,82.35) and (166.1,76.16) .. (174.12,73.04) .. controls (184.55,68.98) and (195.87,71.15) .. (203.97,77.75) -- (185,101) -- cycle ; \draw  [color={rgb, 255:red, 65; green, 117; blue, 5 }  ,draw opacity=1 ] (157.19,89.74) .. controls (160.19,82.35) and (166.1,76.16) .. (174.12,73.04) .. controls (184.55,68.98) and (195.87,71.15) .. (203.97,77.75) ;
		%Straight Lines [id:da7953096667468448] 
		\draw [color={rgb, 255:red, 144; green, 19; blue, 254 }  ,draw opacity=1 ][line width=3]    (119.64,78.07) -- (120.01,113.92) ;
		\draw [shift={(120.06,118.92)}, rotate = 269.40999999999997] [fill={rgb, 255:red, 144; green, 19; blue, 254 }  ,fill opacity=1 ][line width=3]  [draw opacity=0] (16.97,-8.15) -- (0,0) -- (16.97,8.15) -- cycle    ;
		
		\draw  [color={rgb, 255:red, 144; green, 19; blue, 254 }  ,draw opacity=1 ][line width=3]  (134.56,89.42) -- (104.56,89.3)(119.5,104.36) -- (119.62,74.36) ;
		%Straight Lines [id:da6679351436450531] 
		\draw [color={rgb, 255:red, 144; green, 19; blue, 254 }  ,draw opacity=1 ][line width=3]    (247.99,43.41) -- (264.34,75.31) ;
		\draw [shift={(266.62,79.76)}, rotate = 242.87] [fill={rgb, 255:red, 144; green, 19; blue, 254 }  ,fill opacity=1 ][line width=3]  [draw opacity=0] (16.97,-8.15) -- (0,0) -- (16.97,8.15) -- cycle    ;
		
		\draw  [color={rgb, 255:red, 144; green, 19; blue, 254 }  ,draw opacity=1 ][line width=3]  (266.41,46.9) -- (239.52,60.19)(259.61,66.99) -- (246.32,40.1) ;
		%Straight Lines [id:da6770625136984745] 
		\draw    (89.46,220) -- (219.64,219.67) ;

		%Straight Lines [id:da5599018321510362] 
		\draw    (219.64,219.67) -- (243.83,128) ;

		%Straight Lines [id:da4788531850440837] 
		\draw [color={rgb, 255:red, 144; green, 19; blue, 254 }  ,draw opacity=1 ][line width=3]    (157.97,208.07) -- (158.34,243.92) ;
		\draw [shift={(158.39,248.92)}, rotate = 269.40999999999997] [fill={rgb, 255:red, 144; green, 19; blue, 254 }  ,fill opacity=1 ][line width=3]  [draw opacity=0] (16.97,-8.15) -- (0,0) -- (16.97,8.15) -- cycle    ;
		
		\draw  [color={rgb, 255:red, 144; green, 19; blue, 254 }  ,draw opacity=1 ][line width=3]  (172.89,219.42) -- (142.89,219.3)(157.83,234.36) -- (157.95,204.36) ;
		%Straight Lines [id:da23409139624904685] 
		\draw [color={rgb, 255:red, 144; green, 19; blue, 254 }  ,draw opacity=1 ][line width=3]    (240.66,138.23) -- (231.97,173.01) ;
		\draw [shift={(230.76,177.87)}, rotate = 284.02] [fill={rgb, 255:red, 144; green, 19; blue, 254 }  ,fill opacity=1 ][line width=3]  [draw opacity=0] (16.97,-8.15) -- (0,0) -- (16.97,8.15) -- cycle    ;
		
		\draw  [color={rgb, 255:red, 144; green, 19; blue, 254 }  ,draw opacity=1 ][line width=3]  (252.23,152.98) -- (223.23,145.3)(233.89,163.64) -- (241.57,134.64) ;
		%Shape: Arc [id:dp11424793690370905] 
		\draw  [draw opacity=0] (188.93,219.66) .. controls (187.64,209.96) and (191.11,199.83) .. (199.06,192.96) .. controls (207.26,185.88) and (218.19,184.02) .. (227.8,187.09) -- (218.67,215.67) -- cycle ; \draw  [color={rgb, 255:red, 208; green, 2; blue, 27 }  ,draw opacity=1 ] (188.93,219.66) .. controls (187.64,209.96) and (191.11,199.83) .. (199.06,192.96) .. controls (207.26,185.88) and (218.19,184.02) .. (227.8,187.09) ;
		
		% Text Node
		\draw (166,49) node   {$\textcolor[rgb]{0.25,0.46,0.02}{\frac{3\pi }{4}}\textcolor[rgb]{0.25,0.46,0.02}{\leq \alpha < \pi }$};
		% Text Node
		\draw (165.33,168) node   {$\textcolor[rgb]{0.82,0.01,0.11}{\beta \ < \ }\textcolor[rgb]{0.82,0.01,0.11}{\frac{3\pi }{4}}$};

		\end{tikzpicture}
		
		\caption{On one hand, placing a $1/4$ singularity on a boundary angle $\alpha$ such as $3\pi/4 \leq \alpha < \pi$  implies a lot of rotation (and therefore of energy) in the computed cross field as seen on the section \ref{GLPiecewise}. On the other hand, placing a 1/4 singularity on a small angle $\beta < 3\pi/4$ implies much less rotational energy in the cross field. The valence $V(X, \partial G)$ on the two angles correspond to the angle between the successive oriented arrows around the corners which is the minimal rotation that the crosses can perform on those angles to keep their alignment to the boundary. In the case of $\alpha$, this angle implies that no boundary singularity has been placed since the arrow keeps pointing outside the domain. In the case of $\beta$ this angle implies that a boundary singularity has been placed since the arrow points outwards on one side of the angle and is parallel to the boundary on the other side.}
		\label{fig:3pi4} 
	\end{figure}
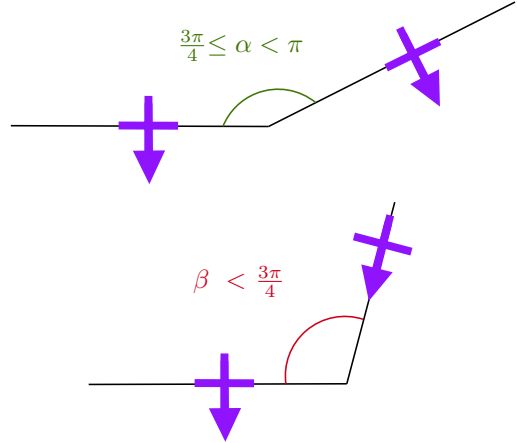

	\begin{figure}[ht] 
		\centering
		\includegraphics[width=.7\linewidth]{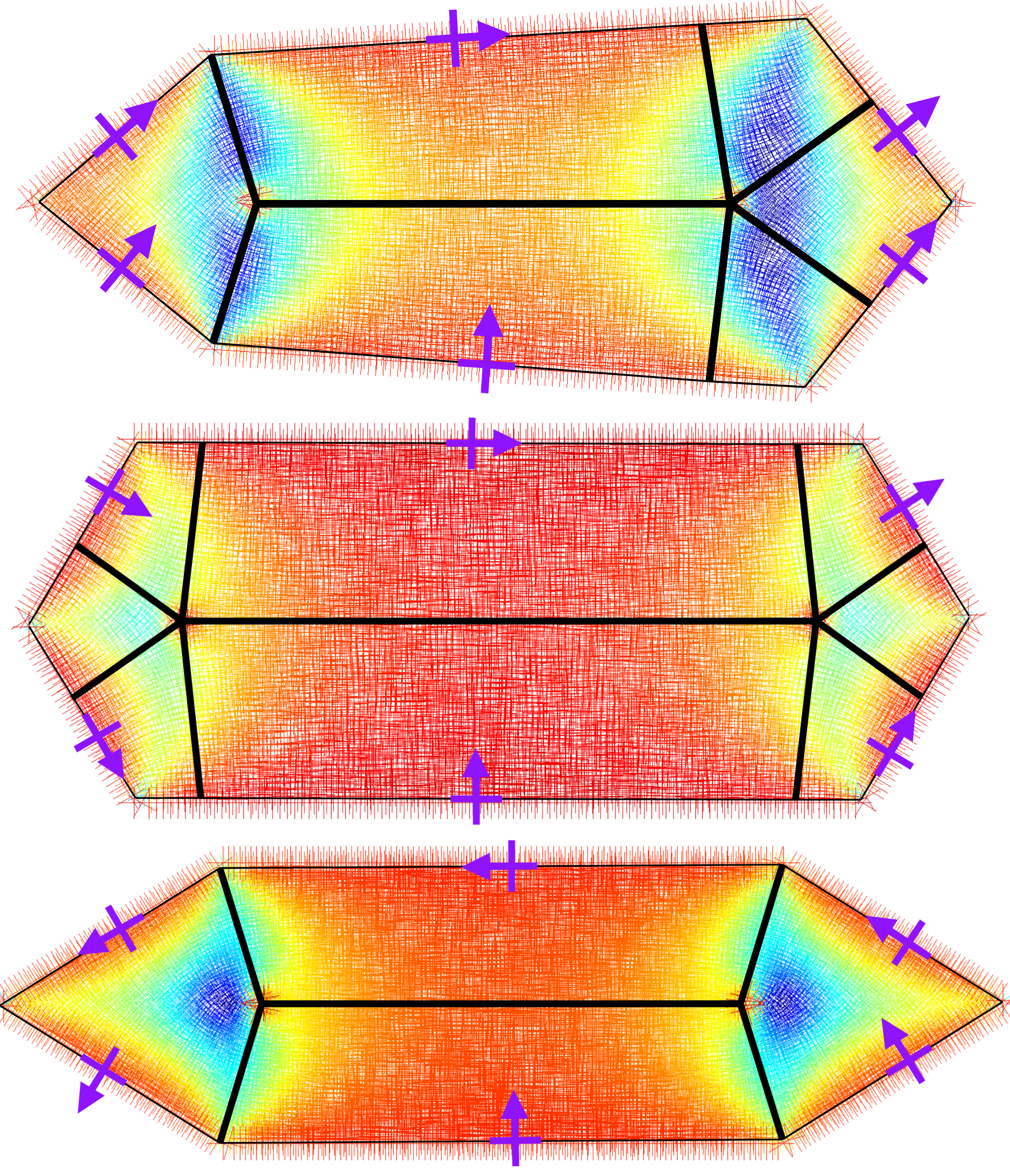} 
		\caption{In the first domain the purple orientation vector has a sum of its smallest variations on the angles on the boundary such as to be mapped on an equivalent cross equals to 0. Furthermore the sum of the indices of the internal singularities equals 0 and since there are angles greater than $\frac{3\pi}{2}$, there must be at least one singularity in the field replacing them. For the second, the sum of the variations equals -1/2 and there is 2 singularities -1/4 in the domain. For the third, the sum of the variations equals 1/2 and there is 2 singularities 1/4 in the domain.}
		\label{fig:threeAngularDomsMeshes} 
	\end{figure} 
	
	Let $\rho(M)$ be the set of the $M^{th}$ roots of the unity and $X$ be a M-directional field 
%	(see~\cite{Viertel}) 
	defined on a piecewise smooth domain $G$ with a finite number of angular points~(jump points of the derivative of the parametrization). We set
	\begin{align*}
	& \gamma : [0, 1] \rightarrow \partial G, \\
	& \gamma(0) = \gamma(1) \text{  and  }\\
	& \gamma \in W^{1, \infty}([0, 1]).
	\end{align*}
	Let $t_j$ be the angular points of $\partial G$ with $1 \leq j \leq N-1$, $t_0 = 0$ and $t_N = 1$. Then we define the valence of a directional field $X$ on the boundary $\partial G$ as 
	\[
		V(X, \partial G) = \sum\limits_{j=1}^{N} \left(X^+(\gamma(t_j)) - X^- (\gamma(t_j))\right) \left[\frac{2\pi}{\rho(M)}\right].
	\]
	Let $P_1, P_2, \ldots, P_N$ be the $N$ singularities of a field $X$ in the interior of a domain $G$.
	Let $\phi$ be the angle between the tangent of the domain boundary and an arbitrary fixed axis.
	For any piecewise smooth domains we speculate that the following equality must hold :
	\begin{equation}
	\sum\limits_{k=1}^N \text{index}(P_k) = V(X, \partial G) +  \frac{1}{2\pi} \int_{\partial G \backslash \{P_k\} }  d \phi. 
	\label{eq:PoinHopfGen}
	\end{equation}
	
	Therefore, the above figure of \ref{fig:threeAngularDomsMeshes} presents a cross field with $V(X, \partial G) = 0$, the central figure a cross field with $V(X, \partial G) = -1/2$, and the below figure a cross field with $V(X, \partial G) = 1/2$. All the figures of this article also corroborate the equation \eqref{eq:PoinHopfGen}.
	
	At this point, it is also important to note that the minimal number of singularities on the first figure of \ref{fig:threeAngularDomsMeshes} has been obtained even if the sum of the internal singularities equals 0. With six boundary segments, we must place singularities in order to end with 4-sided blocks. This configuration minimizes the energy proposed in \ref{GLEnergies}. These results corroborate the claims made by Viertel and Osting \cite{Viertel} that Ginzburg-Landau based cross field generation methods ensure the possibility to extract block-structured quadrangular meshes from the computed cross fields.
	Furthermore, the presence of inside singularities allow to  ``cancel" some of the angular singularities that become regular boundary points from which originate three directions towards the inside of the domain.
	It is cheaper to place one $1/4$ and one $-1/4$ inside singularities than let the two angles close to $\pi$ at the left of the domain be $1/4$ singularities and place two inside $-1/4$ singularities.
	
	Let introduce an other cross field to corroborate equation \eqref{eq:PoinHopfGen} for domains with Euler-Poincar\'e characteristics, $\chi$, different from 1.
	
	%Figure \ref{fig:twoBigAngles} is such that $V(X, \partial G) = 1/4$. Still we need 3 singularities in the domain (one -1/4 and two 1/4). That comes from the presence of two flat angles (greater than $3\pi/4$). Those angles cannot be seen as singularities so they must be "canceled out" via internal singularities. Furthermore, a single 1/4 singularity cannot be enough for then we will have a triangle in the resulting quad layout. The fact that our method does not converge with a single 1/4 singularity imposed must be linked with the fact that it is deeply linked with Ginzburg-Landau approaches that proves to place singularities as appropriate with quad mesh generation (see~\cite{Viertel}). To avoid the production of a triangle in the resulting quad layout we must thus introduce two other opposite singularities that could result in the configuration proposed in \ref{fig:twoBigAngles}.  

	\begin{figure}[ht] 
		\centering
		\includegraphics[width=.8\linewidth]{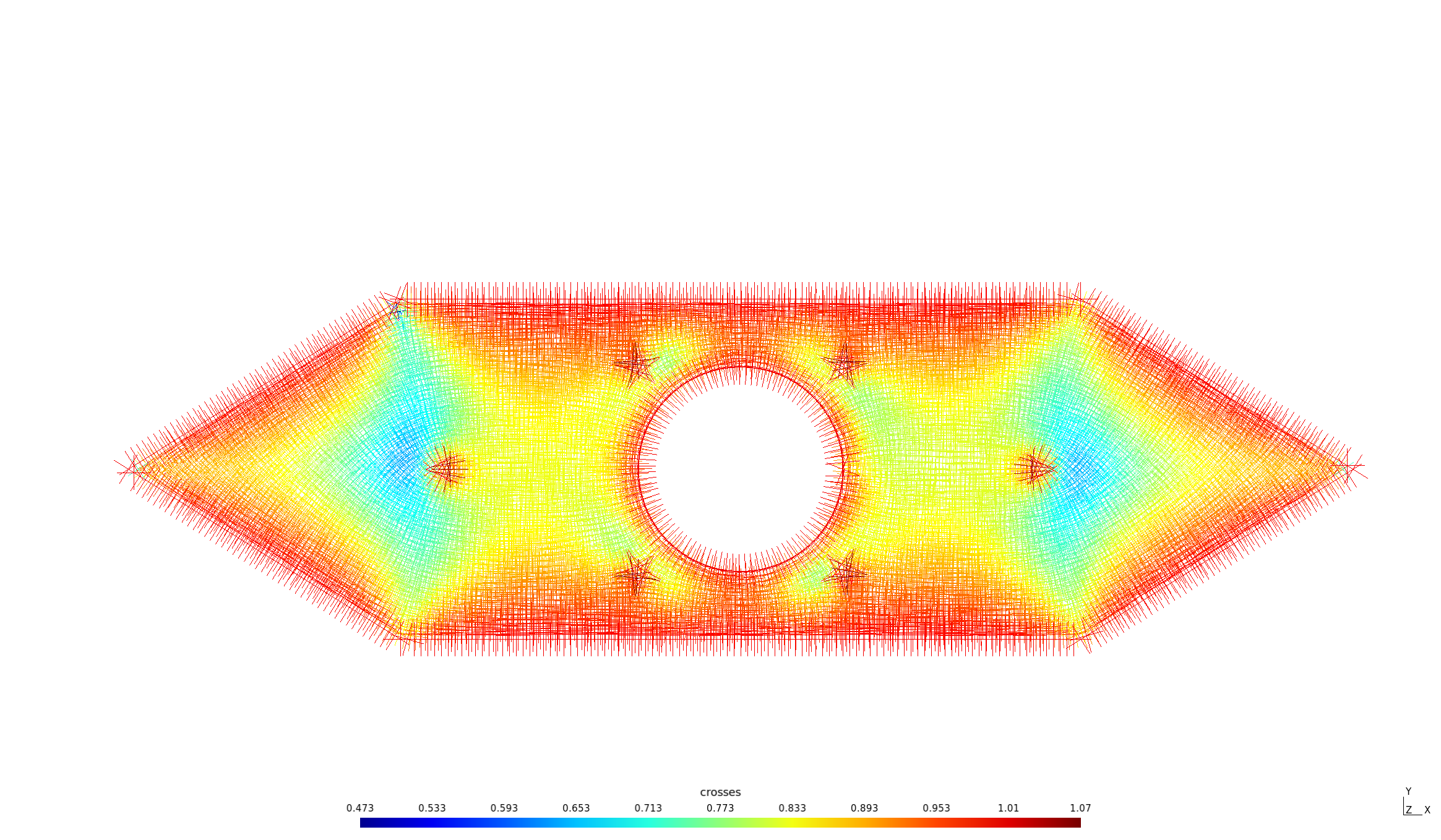} 
		\caption{Domain containing a centered regular hole and 6 singular holes (four 1/4 singular holes around the regular hole and two -1/4 between large angles).}
		\label{fig:test2} 
	\end{figure}
	
	Figure \ref{fig:test2} is such that $V(X, \partial G) = 1/2$ thanks to the external boundary and $\int_{\partial G \backslash \{x_k\} }  \frac{d \phi}{2\pi} = - 1$ thanks to the internal boundary.
	We must then have an internal singularity configuration such that 
	\[ \sum\limits_{k=1}^n \text{index}(x_k) = -1/2. \]
	However we have 4 flat angles that could be ``canceled out" by 1/4 singularities. To complete those singularities four 1/4 singularities could be placed to isolate the internal hole as in the configuration of holed singularities proposed in \ref{fig:test2}. 
	
	In conclusion, the following algorithm could be use to place singularities inside piecewise smooth closed domains. First, the equation \eqref{eq:PoinHopfGen} is computed. Then, all the angles between $3\pi/4$ and $\pi$ not included are outlined and the Euler-Poincar\'e characteristic is computed. Finally, a configurations of holed singularities must be proposed. This configuration could be as to ``cancel out" the angles between $3\pi/4$ and $\pi$ not included. 
	As our methods does not allow to explicitly place boundary singularities, one must verify that its configuration is such as to minimize the energy proposed in \ref{GLPiecewise} for the boundary singularities while respecting the equation \eqref{eq:PoinHopfGen} and the generalized Poincar\'e-Hopf theorem \eqref{eq:PoinHopf} with its imposed inside singularities.

	\subsection{Further thoughts}\label{FurThou}
	
	The minimizers of the Ginzburg-Landau energy 
	\[
	E_\epsilon (u) = \frac{1}{2} \int_G |\nabla u|^2 + \frac{1}{4\epsilon^2} \int_G (|u|^2 -1)^2
	\]
	are functions describing vector fields on surfaces.\\
	
	The study of this energy has lead in its earlier developments to the formulation of the problems used previously in this paper but the most notable conclusion, as we have already seen, is that an equivalent common simpler renormalized energy exists. This energy is about points and degrees of those points on the surface of computation.
	We have seen that its computation involves a scalar field $\Phi$.
	This scalar field $\Phi$ can be seen as the harmonic conjugate of the phase $\theta$ of a complex valued vector field
	\[
	v(x,y) = \Phi(x,y) + i \theta(x,y).
	\]
	This is an interesting information for the computation of cross field since the use of a function
	\[
	w(x,y) = H(x,y) + i \theta(x,y)
	\]
	is possible~\cite{Remacle2011} to construct perpendicular fields $u$ and $v$ that form together a cross field but the computation of $\Phi$ must be adapted to the cross field generation. 
	Notably, we must ensure alignment with the domain boundaries. 
	Let $H$ be such a field that we will give a way to calculate it later.
	As it is the real part of a complex field, the scalar field $H$ is directly linked with the size field of $w$.
	On the one hand, mathematicians will be interested in the positions and degrees of a minimum number of singularities that minimize the Ginzburg-Landau energy.
	On the other hand, experts in quadrangular block-structured meshing will rather seek to impose enough singularities in suited places and with suited degrees to produce a cross field from which a quadrangular block-structured mesh respecting, for example, a certain mesh size can be extracted.
	The computation of the field $H$ is therefore very interesting in terms of size map of the subsequent block-structured quadrangular mesh.
	
	As far as cross-field generation is concerned, the scalar field $H$ is half of a complex field from which a parameterization could be extracted (see \cite{Remacle2011}). Moreover, we can place singularities in it at will while respecting the Poincaré-Hopf theorem.
	$H(x,y) = \text{Re}(w(z))$ is solution of
	\begin{equation}
	\left\{
	\begin{array}{ccccc}
	\nabla^2 H &=& 2 \pi \sum_{j=1}^N {k_j \over 4}  \delta
	(x_j,y_j)& \text{in}& S \\
	\partial_n H &=& \kappa& \text{in}& \partial S, 
	\end{array}
	\right.
	\label{eq:pb}
	\end{equation}
	where $\kappa$ is the local curvature of $\partial S$.\\
	
	To construct the corresponding cross fields, ones can rely on a suited set of cutting and the relation
	\[
	\nabla H = \nabla^T \theta
	\]
	but this remains a challenge.\\
	
	A very interesting point of this new formulation is that it allows the placement of boundary and interior singularities in one and the same way. This allows a user to more easily and freely respect the Poincaré-Hopf theorem.
	
	Here are an example of the computation of the field $H$ given the following suitable set of singularities (the red points are $1/4$ singularities and the green ones are $-1/4$ singularities such as $\sum_{j=1}^N k_j = 4 \chi(S)$).
	\begin{figure}[ht]
		\centering
		\includegraphics[width=0.7\linewidth]{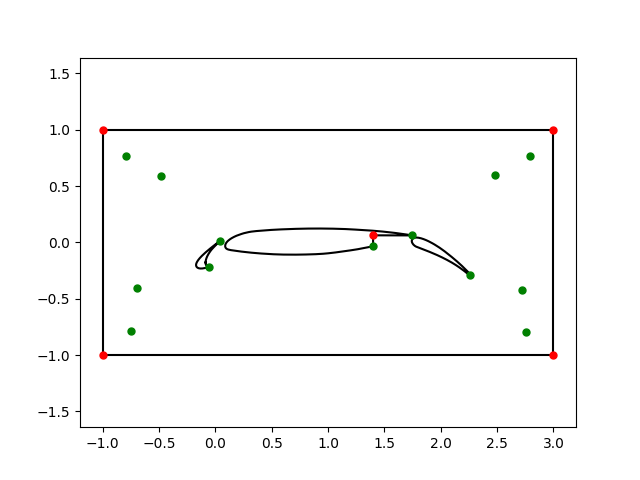}
		\caption{Position of the singularities for the computation of the scalar field $H$.}
		\label{fig:singdeltas}
	\end{figure}

	\begin{figure}[ht] 
		\begin{minipage}[b]{.5\linewidth}
			\centering
			\includegraphics[width=\linewidth]{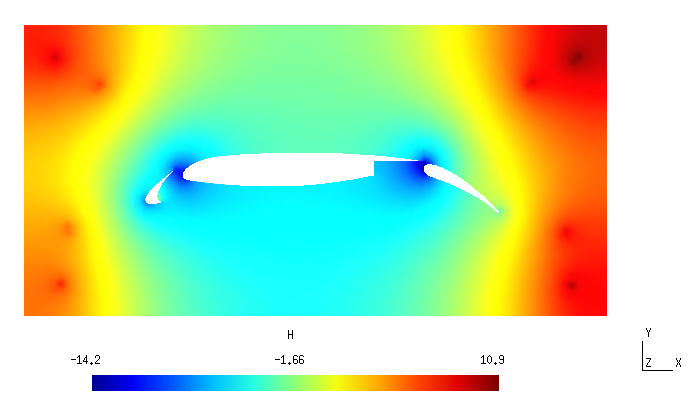}
			\label{fig:h}
			\vspace{-0cm}
		\end{minipage}%%
		\begin{minipage}[b]{.5\linewidth}
			\centering
			\includegraphics[width=\linewidth]{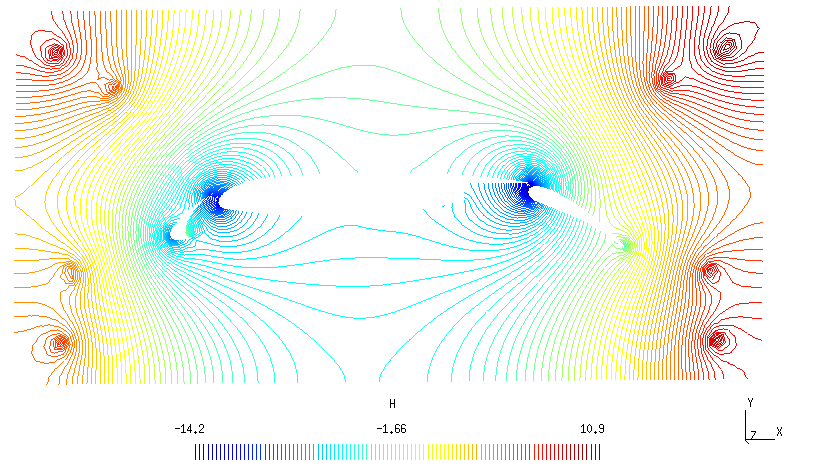}
			\label{fig:hiso}
		\end{minipage}%% 
		\caption{Computation of the scalar field $H$ from a linear Neumann problem given the positions and orders of the deltas given in Figure \ref{fig:singdeltas} is given on the left. On the right, we can see that the iso-values of the computed H are perpendicular to the boundaries of the domain.}
	\end{figure}

%	\begin{figure}[ht]
%		\centering
%		\includegraphics[width=0.5\linewidth]{H}
%		\caption{Computation of the scalar field $H$ from a linear Neumann problem given the positions and orders of the deltas given in Figure \ref{fig:singdeltas}.}
%		\label{fig:h}
%	\end{figure}
%	
%	\begin{figure}[ht]
%		\centering
%		\includegraphics[width=0.6\linewidth]{H_iso}
%		\caption{The iso-values of the computed H are perpendicular to the boundaries of the domain.}
%		\label{fig:hiso}
%	\end{figure}
	The problems that remain to be addressed are the extraction of a cross field, and a size map based on a calculated $H$ field.
	
	On this subject, note that size maps or local refinements already appear in the literature which is a good thing for high-precision numerical simulation. 
	
	Jiang et al.~\cite{Jiang2015} use them not as a goal but as a tool. 
	They compute a discrete metric on
	the input surface to obtain a cross field.
	
	Lyon at al.~\cite{Lyon2020CostML} work directly on block-structured quadrangular meshes that they locally refine.
	For this local refinement, they impose new singularities i.e. new irregular vertices on the mesh.
	They are able to perform local anisotropic refinement with 	few singularities.
	To do so they use a binary program.
	They concentrate on local split of local edges or elements.
	Thus, their method makes it possible to perform local surgeries on meshes already generated where we recommend to take into account the size map as soon as the cross-field is generated in order to directly produce the desired mesh.
	
	\newpage
	\section*{Conclusion}
	
	%%% Contexte
	Most methods generating block-structured quadrangular meshes, with potential user-based inputs, rely on the generation of smooth cross fields.
	In this paper, we have broadened the theoretical context and given new methods to generate such cross fields in the context of Ginzburg-Landau theory.
	In particular, we have defined a new Ginzburg-Landau energy for the boundaries of piecewise smooth domains containing singularities which was formally missing in the scientific literature as noted by Viertel and Osting \cite{Viertel}.
	Moreover, we have given a method to place singularities that does not correspond to those of Ginzburg-Landau energy minimizers and offered a way to evaluate them in terms of this energy.
	Practically, there are three ways to decrease the Ginzburg-Landau energy while respecting the Poincaré-Hopf theorem: (i) decrease the number of singularities, (ii) bring their degrees closer to zero, or (iii) move them to make the cross field smoother.
%	Those methods are already providing interesting results but their are unable to base their solutions on criteria that are really interesting for numerical simulation.
%	In addition to this absence of quality criteria to be used to optimize the computed cross field, the current formulations, although highly optimized, are often complex, involving, for example, mixed-integer optimization.\\

	%%% Ce que j'ai fait

	%%% Nuémrique : 

	% 1) construire avec trous 
	What are the methods and theoretical elements presented in this article and what can be said of their strengths and weaknesses.
	First, we have given a method to generate a cross field with prescribed inner singularity configurations.
	Said configurations consist of a set of holes with prescribed degrees implying a different number of directions from the usual four that meet on a cross.
	Numerically, the method corresponds to the optimization of a linear objective function with very localized quadratic constraints. Another constraint could be needed to ensure the degrees on these holes.
	Indeed, a degree must be computed with crosses of a norm close to one.
	This method works only with singularity configurations that respect the Poincaré-Hopf theorem.
	Unfortunately, this method only allows to impose inner singularities and does not give us clear information concerning the eventual boundary singularities.
	Other generalizations of this method for singularity configurations that do not respect the topology of the domain have been proposed but they are less efficient, generate singular crosses in the field and unintended singularity configurations.
	To help a user to respect the Poincaré-Hopf theorem we have also proposed in this article new theoretical contributions.
	
	% 2) évaluer la position des trous 
	In addition, we have given a way to evaluate the positions and degrees of such holed singularities in terms of Ginzburg-Landau energy but without computing an actual cross field which takes us away from the block-structured quadrangular mesh generation.
	This evaluation gives the same Ginzburg-Landau energy as the asymptotic minimizers of the Ginzburg-Landau functional if the singularities configuration coincide and the radius of the holes tend towards zero.
	
	Hence, the proposed perforation method, as it both produces quality non-singular cross fields and gives the option to choose the holed placements mimicking inner singularities, could be a promising path to efficiently and flexibly handle block-structured quadrangular mesh generation while keeping quality criterion via Ginzburg-Landau energy.
	
	% 3) construire le champ scalaire.
	We have ended the paper with a Neumann problem with zero-radius holes, similar to the ones of the asymptotic Ginzburg-Landau energy for holed domains. 
	In its formulation both inner and boundary singularities are seen in the same way. Furthermore, its input consists of configuration of both inner and boundary singularities.
	This problem allows to construct a single-valued scalar field strongly linked to the multivalued angles of a cross field. 
	This field corresponds to the harmonic conjugate of the phase field because when it is added with i times the phase field, it gives a complex harmonic field almost everywhere.
	Furthermore, the gradients of these two scalar fields are perpendicular.  
	Therefore, based on a given suited singularity configuration, this scalar linear Neumann problem allows to construct a field of the harmonic conjugates of the phases of a cross field. This problem may lead to a particularly simple and informative way to compute cross field. Indeed, this scalar field, as it correspond to the real part of a complex field, could be strongly linked with the local size of the cross field.\\

	%	We present a Neumann problem with these zero-radius holes as singularities in the perspectives.
	%	This scalar field is strongly linked to the multivalued angles of a cross field. 
	%	Indeed this field and the phase field of a cross field have perpendicular gradients and form together a complex harmonic field almost everywhere. This could lead to a new simpler way to represent cross fields.
	
	%%% Théorique : 
	
	% 1) Energy de G-L pour les bords d'un domaine piecewise smooth 
	Through the theoretical developments, we have constructed a Ginzburg-Landau energy for the boundaries of piecewise smooth domains. Minimizing this energy leads to new ranges of limit angles on which boundary singularities appear.
	% 2) nvlle interpretation du th de P-H
	Besides, we have presented a new interpretation of the Poincaré-Hopf theorem. This interpretation makes it possible to apprehend in a simpler way the total degree of the configuration of the internal singularities.
	Although our main method does not allow to place boundary singularities, those two contributions help a user to gain crucial information about them.
	
	%%% Qu'est-ce que ça apporte
%	The presented perforation strategy enables the production of cross fields with chosen singularity configurations on piecewise smooth bounded domains. This method linked with Ginzburg-Landau theory, is efficient and generate regular cross fields for topologically coherent choices of singular hole configurations. 
%	
	%%% Quelles sont les limitations et quel est le futur de mon travail
	
	Throughout this paper, we have used the energy of Ginzburg-Landau as a quality criterion.	
	We expect that the presented methods and theories can later be adapted to other quality criteria, for instance, the adequacy of the future block-structured quadrangular mesh to a prescribed size field.
	
	We hope that our new contributions, unified under the Ginzburg-Landau theory, could contribute to the construction of cross field with singularity configuration focused on the needs of subsequent high performance numerical simulations on block-structured quadrangular meshes.

	\section*{Acknowledgments}
	
	%Many thanks to R\'emy Rodiac (UCLouvain - Orsay Paris-Sud) for his help in understanding and using many mathematical results, and also for his advice in writing this article. Thanks to Fran\c{c}ois Henrotte (UCLouvain) for his reviews and wise inputs. Thanks to Yi-Ann Hagelstein (UCLouvain - National Central University) and Pierre De Wael (UCLouvain) for their spelling corrections and their many tips on discursive construction. Also, thanks to my thesis promoters Jean-Fran\c{c}ois Remacle (UCLouvain) and Jean Van Schaftingen (UCLouvain) for their deep knowledge, their wise insights and their help. Thanks to Pierre-Alexandre Beaufort (UCLouvain) for code sharing and to Alexandre Chemin (UCLouvain) for his help with the implementations.
	%Last but not least, 
	%many thanks to the UCLouvain for the subvention of my work.\\
	
	I would like to thank my dear friend Pierre de Wael for his generous help with English writing. I would also like to thank Rémy Rodiac for his friendship and his many clarifications.
	
	% Either type in your references using
	% \begin{thebibliography}{}
	% \bibitem{}
	% Text
	% \end{thebibliography}
	%
	% or
	%
	% Compile your BiBTeX database using our plos2015.bst
	% style file and paste the contents of your .bbl file
	% here. See http://journals.plos.org/plosone/s/latex for 
	% step-by-step instructions.
	% 
	
	\section{Appendix}
	
	\subsection{Demonstrations for the optimal ranges of boundary singularities}
	
	In the following demonstrations, we choose to add a $\pm 1/4$ singularity or to change the order of a boundary singularity by adding $\pm 1/4$ to it. These are concision choices but the argumentation holds true for other ones.
	
	\subsubsection{Demonstration of the theorem \ref{theo:balanced}}\label{demo:balanced}
	\begin{proof}
		If we have an optimal choice of boundary singularities, we then should have by optimality for every $i \in \{ 1, \ldots, m \}$,
		\begin{multline*}
		\sum_{i\neq f} \left( \frac{\left( \frac{1}{2} - \frac{\alpha_i}{2\pi} - k_i \right)^2}{\frac{\alpha_i}{2 \pi}}\right) + \frac{\left( \frac{1}{2} - \frac{\alpha_{f}}{2\pi} - \left(k_f \pm \frac{1}{4}\right) \right)^2}{\frac{\alpha_{f}}{2 \pi}} + \sum_j |l_j|^2 + \left(\frac{1}{4}\right)^2 \\ \geq \sum_{i}\left( \frac{\left( \frac{1}{2} - \frac{\alpha_i}{2\pi} - k_i \right)^2}{\frac{\alpha_i}{2 \pi}}\right) +  \sum_j |l_j|^2,
		\end{multline*}
		since if we have the optimal order of singularities, increase or decrease the order of a boundary singularity (and add one compensating internal singularity to respect the Poincaré-Hopf theorem) must imply more energy. Some simplifications gives us the following inequality:
		\[
		\frac{\left( \frac{1}{2} - \frac{\alpha_{f}}{2\pi} - \left(k_f \pm \frac{1}{4}\right) \right)^2}{\frac{\alpha_{f}}{2 \pi}} + \left(\frac{1}{4}\right)^2 \geq  \frac{\left( \frac{1}{2} - \frac{\alpha_f}{2\pi} - k_f\right)^2}{\frac{\alpha_f}{2 \pi}} .
		\]
		Developing this last inequality gives
		\begin{multline*}
		\frac{ \left(\frac{1}{2} - \frac{\alpha_{f}}{2\pi}\right)^2 - 2\left(\frac{1}{2} - \frac{\alpha_{f}}{2\pi}\right)\left(k_f \pm \frac{1}{4}\right) + \left(k_f \pm \frac{1}{4}\right)^2}{\frac{\alpha_{f}}{2 \pi}} + \left(\frac{1}{4}\right)^2 \geq\\ \frac{ \left(\frac{1}{2} - \frac{\alpha_{f}}{2\pi}\right)^2 - 2\left(\frac{1}{2} - \frac{\alpha_{f}}{2\pi}\right)k_f + {k_f}^2}{\frac{\alpha_{f}}{2 \pi}},
		\end{multline*}
		which simplifies successively as
		\[
		\frac{\mp 2\left(\frac{1}{2} - \frac{\alpha_{f}}{2\pi}\right)\frac{1}{4} + \left(k_f \pm \frac{1}{4}\right)^2}{\frac{\alpha_{f}}{2 \pi}} + \left(\frac{1}{4}\right)^2 \geq \frac{{k_f}^2}{\frac{\alpha_{f}}{2 \pi}},
		\]
		\[
		\frac{\mp 2\left(\frac{1}{2} - \frac{\alpha_{f}}{2\pi}\right)\frac{1}{4} \pm 2k_f\frac{1}{4} + \left(\frac{1}{4}\right)^2}{\frac{\alpha_{f}}{2 \pi}} + \left(\frac{1}{4}\right)^2 \geq 0,
		\]	
		\[
		\frac{\left(\frac{1}{4}\right)^2 \mp 2\left(\frac{1}{2} - \frac{\alpha_{f}}{2\pi} - k_f \right)\frac{1}{4}}{\frac{\alpha_{f}}{2 \pi}} + \left(\frac{1}{4}\right)^2 \geq 0,
		\]
		that we divide by $(1/4)^2$ and multiply by $\alpha_{f}/(2\pi)$ which gives
		\[
		1 \mp 2\left(\frac{1}{2} - \frac{\alpha_{f}}{2\pi} - k_f \right)4 + \frac{\alpha_{f}}{2 \pi} \geq 0,
		\]
		\[
		\frac{1}{8} \left(1 + \frac{\alpha_{f}}{2 \pi}\right) \geq \pm \left(\frac{1}{2} - \frac{\alpha_{f}}{2\pi} - k_f \right),
		\]
		\[
		-\frac{1}{8} \left(1 + \frac{\alpha_{f}}{2 \pi}\right) \leq \left(\frac{1}{2} - \frac{\alpha_{f}}{2\pi} - k_f \right) \leq \frac{1}{8} \left(1 + \frac{\alpha_{f}}{2 \pi}\right),
		\]
		\[
		- \frac{1}{2} + \frac{\alpha_{f}}{2\pi} -\frac{1}{8} \left(1 + \frac{\alpha_{f}}{2 \pi}\right) \leq  - k_f \leq - \frac{1}{2} + \frac{\alpha_{f}}{2\pi} + \frac{1}{8} \left(1 + \frac{\alpha_{f}}{2 \pi}\right),
		\]
		\[
		\frac{1}{2} - \frac{\alpha_{f}}{2\pi} - \frac{1}{8} \left(1 + \frac{\alpha_{f}}{2 \pi}\right) \leq k_f \leq \frac{1}{2} - \frac{\alpha_{f}}{2\pi} +\frac{1}{8} \left(1 + \frac{\alpha_{f}}{2 \pi}\right) ,
		\]
		\[
		\frac{ 1 - \frac{1}{4}}{2} - \left(1 + \frac{1}{8}\right)\frac{\alpha_{f}}{2\pi} \leq  k_f \leq \frac{ 1 + \frac{1}{4}}{2}  - \left(1 - \frac{1}{8}\right)\frac{\alpha_{f}}{2\pi}.
		\]
	\end{proof}

	\subsubsection{Demonstration of the theorem \ref{theo:rare}}\label{demo:rare}

	If we have an optimal choice of boundary singularities, we then should have by optimality for every $i \in \{ 1, \ldots, m \}$,
	\begin{multline*}
	\sum_{i\neq f} \left( \frac{\left( \frac{1}{2} - \frac{\alpha_i}{2\pi} - k_i \right)^2}{\frac{\alpha_i}{2 \pi}}\right) + \frac{\left( \frac{1}{2} - \frac{\alpha_{f}}{2\pi} - \left(k_f \pm \frac{1}{4}\right) \right)^2}{\frac{\alpha_{f}}{2 \pi}} + \sum_j |l_j|^2 \\ \leq \sum_{i}\left( \frac{\left( \frac{1}{2} - \frac{\alpha_i}{2\pi} - k_i \right)^2}{\frac{\alpha_i}{2 \pi}}\right) + \sum_j |l_j|^2 \pm \left(\frac{1}{4}\right)^2,
	\end{multline*}
	since we have boundary singularity degrees that sum up to less than $\chi$ but we have choose the optimal choice of degrees by hypothesis, increase or decrease the order of a boundary singularity must imply less energy than adding or removing an internal one in order to closer respect the Poincaré-Hopf theorem. Some simplifications gives us the following inequality:
	\[
	\frac{\left( \frac{1}{2} - \frac{\alpha_{f}}{2\pi} - \left(k_f \pm \frac{1}{4}\right) \right)^2}{\frac{\alpha_{f}}{2 \pi}} \leq  \frac{\left( \frac{1}{2} - \frac{\alpha_f}{2\pi} - k_f\right)^2}{\frac{\alpha_f}{2 \pi}} \pm \left(\frac{1}{4}\right)^2.
	\]
	Developing this last inequality gives
	\begin{multline*}
	\frac{ \left(\frac{1}{2} - \frac{\alpha_{f}}{2\pi}\right)^2 - 2\left(\frac{1}{2} - \frac{\alpha_{f}}{2\pi}\right)\left(k_f \pm \frac{1}{4}\right) + \left(k_f \pm \frac{1}{4}\right)^2}{\frac{\alpha_{f}}{2 \pi}} \leq\\ \frac{ \left(\frac{1}{2} - \frac{\alpha_{f}}{2\pi}\right)^2 - 2\left(\frac{1}{2} - \frac{\alpha_{f}}{2\pi}\right)k_f + {k_f}^2}{\frac{\alpha_{f}}{2 \pi}} \pm \left(\frac{1}{4}\right)^2 ,
	\end{multline*}
	which simplifies successively as
	\[
	\frac{\mp 2\left(\frac{1}{2} - \frac{\alpha_{f}}{2\pi}\right)\frac{1}{4} + \left(k_f \pm \frac{1}{4}\right)^2}{\frac{\alpha_{f}}{2 \pi}} \leq \frac{{k_f}^2}{\frac{\alpha_{f}}{2 \pi}} \pm \left(\frac{1}{4}\right)^2,
	\]
	\[
	\frac{\mp 2\left(\frac{1}{2} - \frac{\alpha_{f}}{2\pi}\right)\frac{1}{4} \pm 2k_f\frac{1}{4} + \left(\frac{1}{4}\right)^2}{\frac{\alpha_{f}}{2 \pi}}  \leq \pm \left(\frac{1}{4}\right)^2,
	\]	
	\[
	\frac{\left(\frac{1}{4}\right)^2 \mp 2\left(\frac{1}{2} - \frac{\alpha_{f}}{2\pi} - k_f \right)\frac{1}{4}}{\frac{\alpha_{f}}{2 \pi}}  \leq \pm \left(\frac{1}{4}\right)^2,
	\]
	that we divide by $(1/4)^2$ and multiply by $\alpha_{f}/(2\pi)$ which gives
	\[
	1 \mp 2\left(\frac{1}{2} - \frac{\alpha_{f}}{2\pi} - k_f \right)4  \leq \pm \frac{\alpha_{f}}{2 \pi},
	\]
	\[
	1 \mp 2\left(\frac{1}{2} - \frac{\alpha_{f}}{2\pi} - k_f \right)4 \mp \frac{\alpha_{f}}{2 \pi} \leq 0,
	\]
	\[
	\mp 2\left(\frac{1}{2} - \frac{\alpha_{f}}{2\pi} - k_f + \frac{1}{8} \frac{\alpha_{f}}{2 \pi} \right)4 + 1 \leq 0,
	\]
	\[
	\frac{1}{8}  \leq \pm \left(\frac{1}{2} + \frac{\alpha_{f}}{2\pi}\left( \frac{1}{8} - 1 \right) - k_f \right),
	\]
	\[
	-\frac{1}{8} \leq \left(\frac{1}{2} + \frac{\alpha_{f}}{2\pi}\left( \frac{1}{8} - 1 \right) - k_f \right) \leq \frac{1}{8}  ,
	\]
	\[
	-\frac{1}{8} - \frac{1}{2} + \frac{\alpha_{f}}{2\pi} \left( 1 - \frac{1}{8} \right)  \leq  - k_f \leq  \frac{1}{8}  - \frac{1}{2} + \frac{\alpha_{f}}{2\pi} \left( 1 - \frac{1}{8} \right) ,
	\]
	\[
	\frac{ 1 - \frac{1}{4}}{2} - \left(1 - \frac{1}{8}\right)\frac{\alpha_{f}}{2\pi} \leq  k_f \leq \frac{ 1 + \frac{1}{4}}{2}  - \left(1 - \frac{1}{8} \right)\frac{\alpha_{f}}{2\pi}.
	\]
	
	\subsubsection{Demonstration of the theorem \ref{theo:excess}}\label{demo:excess}
	
	If we have an optimal choice of boundary singularities, we then should have by optimality for every $i \in \{ 1, \ldots, m \}$,
	\begin{multline*}
	\sum_{i\neq f} \left( \frac{\left( \frac{1}{2} - \frac{\alpha_i}{2\pi} - k_i \right)^2}{\frac{\alpha_i}{2 \pi}}\right) + \frac{\left( \frac{1}{2} - \frac{\alpha_{f}}{2\pi} - \left(k_f \mp \frac{1}{4}\right) \right)^2}{\frac{\alpha_{f}}{2 \pi}} + \sum_j |l_j|^2 \\ \geq \sum_{i}\left( \frac{\left( \frac{1}{2} - \frac{\alpha_i}{2\pi} - k_i \right)^2}{\frac{\alpha_i}{2 \pi}}\right) + \sum_j |l_j|^2 \pm \left(\frac{1}{4}\right)^2,
	\end{multline*}
	since we have boundary singularity degrees that sum up to more that $\chi$ but we have choose the optimal choice of degrees by hypothesis, increase or decrease the order of a boundary singularity must imply more energy than adding or removing an internal one in order to closer respect the Poincaré-Hopf theorem. Some simplifications gives us the following inequality:
	\[
	\frac{\left( \frac{1}{2} - \frac{\alpha_{f}}{2\pi} - \left(k_f \mp \frac{1}{4}\right) \right)^2}{\frac{\alpha_{f}}{2 \pi}} \geq  \frac{\left( \frac{1}{2} - \frac{\alpha_f}{2\pi} - k_f\right)^2}{\frac{\alpha_f}{2 \pi}} \pm \left(\frac{1}{4}\right)^2.
	\]
	Developing this last inequality gives
	\begin{multline*}
	\frac{ \left(\frac{1}{2} - \frac{\alpha_{f}}{2\pi}\right)^2 - 2\left(\frac{1}{2} - \frac{\alpha_{f}}{2\pi}\right)\left(k_f \mp \frac{1}{4}\right) + \left(k_f \mp \frac{1}{4}\right)^2}{\frac{\alpha_{f}}{2 \pi}} \geq\\ \frac{ \left(\frac{1}{2} - \frac{\alpha_{f}}{2\pi}\right)^2 - 2\left(\frac{1}{2} - \frac{\alpha_{f}}{2\pi}\right)k_f + {k_f}^2}{\frac{\alpha_{f}}{2 \pi}} \pm \left(\frac{1}{4}\right)^2 ,
	\end{multline*}
	which simplifies successively as
	\[
	\frac{\pm 2\left(\frac{1}{2} - \frac{\alpha_{f}}{2\pi}\right)\frac{1}{4} + \left(k_f \pm \frac{1}{4}\right)^2}{\frac{\alpha_{f}}{2 \pi}} \geq \frac{{k_f}^2}{\frac{\alpha_{f}}{2 \pi}} \pm \left(\frac{1}{4}\right)^2,
	\]
	\[
	\frac{\pm 2\left(\frac{1}{2} - \frac{\alpha_{f}}{2\pi}\right)\frac{1}{4} \pm 2k_f\frac{1}{4} + \left(\frac{1}{4}\right)^2}{\frac{\alpha_{f}}{2 \pi}}  \geq \pm \left(\frac{1}{4}\right)^2,
	\]	
	\[
	\frac{\left(\frac{1}{4}\right)^2 \pm 2\left(\frac{1}{2} - \frac{\alpha_{f}}{2\pi} - k_f \right)\frac{1}{4}}{\frac{\alpha_{f}}{2 \pi}}  \geq \pm \left(\frac{1}{4}\right)^2,
	\]
	that we divide by $(1/4)^2$ and multiply by $\alpha_{f}/(2\pi)$ which gives
	\[
	1 \pm 2\left(\frac{1}{2} - \frac{\alpha_{f}}{2\pi} - k_f \right)4  \geq \pm \frac{\alpha_{f}}{2 \pi},
	\]
	\[
	1 \pm 2\left(\frac{1}{2} - \frac{\alpha_{f}}{2\pi} - k_f \right)4 \mp \frac{\alpha_{f}}{2 \pi} \geq 0,
	\]
	\[
	\pm 2\left(\frac{1}{2} - \frac{\alpha_{f}}{2\pi} - k_f - \frac{1}{8} \frac{\alpha_{f}}{2 \pi} \right)4 + 1 \geq 0,
	\]
	\[
	\frac{1}{8}  \geq \pm \left(\frac{1}{2} + \frac{\alpha_{f}}{2\pi}\left( - \frac{1}{8} - 1 \right) - k_f \right),
	\]
	\[
	-\frac{1}{8} \leq \left(\frac{1}{2} + \frac{\alpha_{f}}{2\pi}\left( - \frac{1}{8} - 1 \right) - k_f \right) \leq \frac{1}{8}  ,
	\]
	\[
	-\frac{1}{8} - \frac{1}{2} + \frac{\alpha_{f}}{2\pi} \left( 1 + \frac{1}{8} \right)  \leq  - k_f \leq  \frac{1}{8}  - \frac{1}{2} + \frac{\alpha_{f}}{2\pi} \left( 1 + \frac{1}{8} \right) ,
	\]
	\[
	\frac{ 1 - \frac{1}{4}}{2} - \left(1 + \frac{1}{8}\right)\frac{\alpha_{f}}{2\pi} \leq  k_f \leq \frac{ 1 + \frac{1}{4}}{2}  - \left(1 + \frac{1}{8} \right)\frac{\alpha_{f}}{2\pi}.
	\]
	
	\nolinenumbers
	
	\bibliography{references}
	
	%\begin{thebibliography}{10}
	%
	%\bibitem{bib1}
	%Conant GC, Wolfe KH.
	%\newblock {{T}urning a hobby into a job: how duplicated genes find new
	%  functions}.
	%\newblock Nat Rev Genet. 2008 Dec;9(12):938--950.
	%
	%\bibitem{bib2}
	%Ohno S.
	%\newblock Evolution by gene duplication.
	%\newblock London: George Alien \& Unwin Ltd. Berlin, Heidelberg and New York:
	%  Springer-Verlag.; 1970.
	%
	%\bibitem{bib3}
	%Magwire MM, Bayer F, Webster CL, Cao C, Jiggins FM.
	%\newblock {{S}uccessive increases in the resistance of {D}rosophila to viral
	%  infection through a transposon insertion followed by a {D}uplication}.
	%\newblock PLoS Genet. 2011 Oct;7(10):e1002337.
	%
	%\end{thebibliography}

\end{document}